\def\bl{\begin{lemma}}
\def\el{\end{lemma}}
\def\bth{\begin{theorem}}
\def\eth{\end{theorem}}
\def\bc{\begin{corollary}}
\def\ec{\end{corollary}}
\def\bcj{\begin{conjecture}}
\def\ecj{\end{conjecture}}
\def\bpr{\begin{proposition}}
\def\epr{\end{proposition}}
\def\bde{\begin{definition}}
\def\ede{\end{definition}}
\def\E{\mathbf{E}}
\def\Var{\mbox{\rm Var}}
\newcommand{\be}{\begin{eqnarray}}
\newcommand{\ee}{\end{eqnarray}}
\newcommand{\eqn}[2]{\begin{equation}\label{#1}{#2}\end{equation}}
\newcommand{\eqnst}[1]{\begin{equation*}{#1}\end{equation*}}
\newcommand{\eqnspl}[2]{\begin{equation}\label{#1}\begin{split}{#2}\end{split}\end{equation}}
\newcommand{\eqnsplst}[1]{\begin{equation*}\begin{split}{#1}\end{split}\end{equation*}}
\newcommand{\scrs}{\scriptstyle}
\newcommand{\eps}{{\mbox{$\epsilon$}}}
\newcommand{\R}{{\mathbb R}}
\newcommand{\Z}{{\mathbb Z}}
\newcommand{\T}{{\mathcal T}}
\newcommand{\TS}{{\mathcal{TS}}}
\newcommand{\F}{{\mathcal F}}
\newcommand{\U}{{\mathcal U}}
\newcommand{\V}{{\mathcal V}}
\newcommand{\I}{{\mathcal I}}
\newcommand{\G}{{\mathcal G}}
\newcommand{\Gtree}{\G_{\mathrm{tree}}}
\newcommand{\Ggood}{\G_{\mathrm{good}}}
\newcommand{\A}{{\mathcal A}}
\newcommand{\B}{{\mathcal B}}
\newcommand{\D}{{\mathcal D}}
\newcommand{\cH}{{\EE}}
\newcommand{\RR}{{\mathcal R}}
\newcommand{\XX}{{\mathcal X}}
\newcommand{\YY}{{\mathcal Y}}
\newcommand{\EE}{{\mathcal E}}
\newcommand{\NN}{{\mathcal N}}
\renewcommand{\and}{\hbox{ {\rm and} }}
\newcommand{\prob}{\mbox{\bf P}}
\newcommand{\p}{\mbox{\bf p}}
\newcommand{\q}{\mbox{\bf q}}
\newcommand{\LL}{\mathcal{L}}
\newcommand{\lr}{\leftrightarrow}
\newtheorem{theorem}{Theorem}
\newtheorem{definition}{Definition}[section]
\newtheorem{lemma}{Lemma}[section]
\newtheorem{corollary}[lemma]{Corollary}
\newtheorem{proposition}[theorem]{Proposition}
\newtheorem{conjecture}[theorem]{Conjecture}
\theoremstyle{definition}
\numberwithin{equation}{section}
\def\Reff{R_{\rm eff}}
\newcommand{\lra}{\leftrightarrow}
\newcommand{\lf}{\lfloor}
\newcommand{\rf}{\rfloor}
\newcommand{\cR}{\mathcal{R}}
\begin{document}
\title[Electrical resistance of the critical branching random walk]{Electrical resistance of the low dimensional critical branching random walk}
\author{Antal A. J\'arai} \author{Asaf Nachmias}

\begin{abstract} We show that the electrical resistance between the origin and generation $n$ of the incipient infinite  oriented branching random walk in dimensions $d<6$ is $O(n^{1-\alpha})$ for some universal constant $\alpha > 0$. This answers a question of Barlow, J\'arai, Kumagai and Slade \cite{BJKS08}.

%

%
\end{abstract}

\maketitle

\section{{\bf Introduction}}

We study the electrical resistance of the trace of oriented critical branching random walk (BRW) in low dimensions. This trace is obtained by drawing a critical Galton-Watson tree $\T$ conditioned to survive forever and randomly mapping it into $\Z^d \times \Z^+$ in the following manner: we initialize by mapping the root of $\T$ to $(o,0)$ and recursively, if $V \in \T$ was mapped to $(x, n)$ and $U \in \T$ is a child of $V$, then we map $U$ to $(y,n+1)$ where $y$ is chosen according to a symmetric random walk distribution (we assume that this distribution has an exponential moment). Denote by $\Phi:\T \to \Z^d \times \Z^+$ this random mapping. The trace we consider in this paper is the graph induced by set of edges $\{\Phi(V), \Phi(U)\}$ for every edge $\{U,V\}$ of $\T$. 

It follows from the work of Barlow, J\'arai, Kumagai and Slade \cite[Example 1.8(iii)]{BJKS08} (who studied the much more difficult model of critical oriented percolation (OP)) that when $d>6$, the electrical resistance between the root and generation $n$ in the BRW is linear in probability. This enabled them to calculate various exponents describing the behavior of the simple random walk on the trace. In particular, they show that the mean hitting time at graph distance $n$ is $\Theta(n^3)$, that the spectral dimension equals $4/3$ and more, see \cite{BJKS08}.

They asked \cite[Section 1.4 (iii)]{BJKS08} whether the resistance of the critical BRW is still linear in $n$ in dimensions $4 < d \leq 6$, that is, in any dimension above the critical dimension $4$ of OP \cite{HHS1, HHS2, HHS3, HS1}. Here we answer their question by showing that the resistance is $O(n^{1-\alpha})$ when $d \leq 5$.

\begin{theorem}
\label{mainthmint}
Let $R(n)$ denote the expected effective resistance between the origin and generation $n$
of a branching random walk in dimension $d<6$ with progeny distribution that has mean $1$, positive variance
and finite third moment, conditioned to survive forever. There exists a universal constant $\alpha>0$ such that
$$ R(n) = O(n^{1-\alpha}) \, .$$
\end{theorem}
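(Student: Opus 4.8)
The plan is to compare the resistance of the branching random walk (BRW) trace to that of the underlying Galton--Watson tree $\T$, and then to exploit the fact that in low dimensions many vertices of $\T$ get mapped by $\Phi$ to the \emph{same} site of $\Z^d\times\Z^+$, which creates shortcuts and lowers the resistance below linear. The resistance of the tree $\T$ itself, conditioned to survive forever, between the root and generation $n$ is $\Theta(n)$ in expectation, so any gain must come entirely from self-intersections of $\Phi$. Concretely, I would fix a large constant $L$, partition $\{0,1,\dots,n\}$ into $n/L$ blocks of $L$ generations each, and show that in a constant fraction of the blocks the trace $\Phi(\T)$ restricted to that block has effective resistance at most $(1-\delta)L$ for some fixed $\delta>0$; summing over blocks using the series law for resistances in a ``layered'' graph then yields $R(n)\le (1-\delta')n$ for the doubled scale, and iterating the argument across scales $L, L^2, L^3,\dots$ (this is where the exponent $\alpha$ is born) gives $R(n)=O(n^{1-\alpha})$.

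The key input for a single block is a second-moment / isoperimetric estimate on collisions. Within a window of $L$ generations, the incipient infinite tree has on the order of $L$ vertices per generation near the backbone, and each is mapped to $\Z^d\times\Z^+$ by an independent increment walk started from roughly a $\sqrt{L}$-neighbourhood; in $d<6$ the expected number of pairs of tree-vertices in the same layer landing on the same site is of order $L^{2}\cdot L^{-d/2}=L^{2-d/2}$, which is $\gg L$ precisely when $d<6$ (here the relevant comparison uses the generation count squared times the $d$-dimensional local limit term, and the genealogical structure of the incipient infinite tree which forces many pairs to be genealogically close and hence spatially close). Each such collision identifies two tree-vertices into one node of the electrical network, and identifying a positive-density set of vertices in each layer provably reduces the effective resistance across the block by a multiplicative constant --- this can be made rigorous by a Nash--Williams lower bound for the comparison in the wrong direction combined with an explicit unit flow (or by the random-path / flow-energy method) showing the resistance is at most $(1-\delta)L$ with probability bounded away from $0$, uniformly in the starting configuration of the block.

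I would then run the renormalisation: let $\rho_k$ be (a high-probability upper bound on) $R(L^k)$ normalised by $L^k$. The block argument, applied at scale $L^k$ inside a window of length $L^{k+1}$, shows that a constant fraction of the $L$ sub-blocks are ``good'' (resistance $\le (1-\delta)$ times the trivial bound), and a good block in series with bad blocks gives $\rho_{k+1}\le (1 - c\delta)\rho_k$; hence $\rho_k \le (1-c\delta)^k$, i.e. $R(L^k)\le L^k (1-c\delta)^k = (L^k)^{1-\alpha}$ with $\alpha = -\log(1-c\delta)/\log L>0$. Interpolating between scales $L^k$ and $L^{k+1}$ (resistance is monotone and subadditive along generations) upgrades this to $R(n)=O(n^{1-\alpha})$ for all $n$, and taking expectations requires only that the bad event in each block has probability bounded away from $1$ together with the deterministic bound $R(n)\le n$ to control the contribution of atypical configurations.

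The main obstacle I anticipate is not the renormalisation bookkeeping but the collision estimate carried out \emph{on the incipient infinite (size-biased) tree rather than on the unconditioned critical tree}: one must show that conditioning to survive forever does not destroy the abundance of genealogically-close pairs of vertices in a common generation, and that the spatial embedding really does map a positive density of these pairs to coincident sites with probability bounded below --- a genuine second-moment argument, since the number of collisions has large variance and one needs a Paley--Zygmund-type lower bound together with control of the fourth moment (or a concentration-via-martingale argument) to guarantee that a block is good with probability bounded away from zero rather than merely in expectation. Controlling the dependence between the ``tree'' randomness and the ``displacement'' randomness, and ensuring the estimates are uniform over the (random) boundary condition inherited from the previous block, is the technical heart of the proof.
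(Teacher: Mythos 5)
There is a genuine gap, and it sits exactly at the step you call the key input. Knowing that coincidences are abundant in $d<6$ is not by itself enough (incidentally your count is off: the expected number of same-height coincident pairs in a window of $L$ generations is of order $L\cdot L^2\cdot L^{-d/2}=L^{(6-d)/2}$, and the relevant threshold is that this diverges iff $d<6$, not a comparison with $L$). The real issue is that a collision only lowers the resistance between the two ends of a block if \emph{both} colliding vertices sit on branches that connect back to those ends with controlled resistance. Almost all coincident pairs lie in the critical side bushes hanging off the backbone, which are dead ends; identifying two dead-end vertices changes nothing about the effective resistance across the block, so your claim that ``identifying a positive-density set of vertices in each layer provably reduces the effective resistance across the block by a multiplicative constant'' is false as stated, and no Nash--Williams/flow argument will rescue it without additional structure. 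The paper has to manufacture that structure: a ``good block'' contains a second path of unique descendants running in parallel to the backbone across the whole block, an intersection between the two trees emanating from the far ends of these two paths, and --- crucially --- a distributional self-similarity statement (the decomposition lemma for the doubly rooted tree) showing that the branches leading to a uniformly chosen intersection point are themselves distributed as smaller copies of the model, so their resistance is bounded by the inductive quantity $\gamma(k)$; only then does the parallel law give a $\tfrac34$-type multiplicative gain. This is also why one needs \emph{many} intersections, of order $(\delta n)^{1/2}$, not just one: picking an intersection uniformly introduces a factor $1/|\I|$ that must cancel the entropy of summing over all candidate pairs.

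A second, independent gap is your treatment of bad blocks. In the recursion the good blocks contribute roughly $\tfrac34 K\gamma(\delta n)$ with $\gamma(\delta n)\approx A(\delta n)^{1-\alpha}\ll \delta n$, so bounding a bad block merely by the deterministic bound (its length) gives a term of order $(1-c)\,\delta n$ per block, which dominates and returns a linear overall bound --- the multiplicative gain is destroyed. One must show that, conditioned on each way a block can be bad, the expected resistance is still at most $(1+O(\delta+\alpha))$ times the unconditional inductive bound; this requires FKG-type comparisons for the tree conditioning, random-walk estimates conditioned on the endpoint, and a strengthened induction hypothesis of the form $\gamma(n,x)\le A n^{1-\alpha}\bigl(\|x\|^2/n\vee 1\bigr)^{\alpha}$, because conditioned on an atypically large displacement the resistance genuinely is linear. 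Your proposal acknowledges boundary-condition uniformity as a difficulty but proposes to handle atypical configurations with $R(n)\le n$, which does not work; the bad-block analysis (and the displacement-dependent induction hypothesis it forces) is a substantial part of the actual proof, comparable in weight to the intersection estimate you identify as the heart.
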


Unlike our firm understanding of anomalous diffusion in high dimensions \cite{BJKS08, Kes, KN08}, random fractals in
low dimensions are not (stochastically) finitely ramified. That is, we do not see pivotal edges at every scale. This makes their analysis more challenging, even in the case of the critical BRW which is one of the simplest models of statistical
physics. Our argument heavily relies on the built-in independence and self-similarity of
the model to obtain recursive inequalities for the resistance. We first show that intersections within the trace occur at every scale (see Figure \ref{fig:K-tree-good} and Theorem \ref{goodthingshappen}); these intersections exist only when $d<6$. Secondly, we show that the branches leading to each intersection are themselves distributed as BRW, allowing us to bound the electrical circuit using the parallel law and to form recursive estimates (Theorem \ref{goodblock}). There are additional technical difficulties to overcome. For instance, when intersections do not occur, the resistance is stochastically larger than it is unconditionally and one needs to get adequate bounds on it. Calculating the precise polynomial exponent which determines the growth of $R(n)$ when $d<6$ remains a challenging open problem.

As mentioned before, it is believed that OP in $d=5$ behaves similarly to BRW hence we expect an analogue of Theorem \ref{mainthmint} to hold. Presumably, the general setup (illustrated in Figure \ref{fig:K-tree-good}) and proving existence of intersections (Theorem \ref{goodthingshappen}) can be done for OP (based on results of \cite{HHS1, HHS2}). However, due to the lack of distributional self-similarity in OP it seems difficult to obtain recursive bounds (that is, an analogue of Theorem \ref{goodblock}). Furthermore, we do not know whether the exponent determining the growth of the resistance in OP in $d=5$ should be the same as the one for BRW (assuming they both exist).

It is easy to see (and stated in \cite{BJKS08}) that the volume up to generation $n$ of the BRW trace is of order $\Theta(n^2)$ in probability. Hence, Theorem \ref{mainthmint} together with the commute time identity \eqref{commutetime} shows that the mean exit time of the simple random walk on the BRW trace from the ball of radius $n$ in graph distance is at most $O(n^{3-\alpha})$, i.e., much faster than the $\Theta(n^3)$ in dimensions $d>6$, see \cite{BJKS08}. In fact, if one calculated the exponent determining the growth of the resistance, then many other random walk exponents (such as the spectral dimension, walk dimension etc.) could be determined, see \cite{KM}. In particular, if the resistance exponent exists, it follows from our results that the spectral dimension is strictly larger than $4/3$. \\

\noindent{\bf Remark 1.} We emphasize that the exponent $\alpha>0$ of Theorem \ref{mainthmint} is universal in the sense that it does not depend on the progeny or random walk distributions.


\noindent {\bf Remark 2.} By projecting the trace to $\Z^d$ we get a similar result for the usual (non-oriented) branching random walk: the effective resistance between the origin and the particles of generation $n$ is $O(n^{1-\alpha})$ when $d \leq 5$. This is because the projection only decreases the effective resistance. By a similar argument, projecting $\Z^5$ into $\Z^d$ with $d<5$, we learn that it suffices to prove Theorem \ref{mainthmint} for $d=5$.

\subsection{Incipient infinite branching process}
Let $\{p(k)\}_{k \geq 0}$ be a progeny distribution of a Galton-Watson branching process.
Our assumptions on $\{p(k)\}$ are the following.
\begin{enumerate}
\item Criticality: $\sum_{k} k p(k)=1$.
\item Finite variance: $\sum_{k} k(k-1) p(k) = \sigma^2 \in (0,\infty)$.
\item Bounded third moment: $\sum_k k^3 p(k)  \leq C_3 < \infty$
\end{enumerate}
It is classical that under condition (i) (and that $p(1) < 1$) the branching process dies out with probability $1$. To construct the
incipient infinite branching process (IIBP), we simply condition on survival up to level $n$,
and take the weak limit of the measures obtained as $n \to \infty$. However, it will
be convenient for us to use an equivalent construction of the IIBP (see \cite{Kes, LPP}).

Consider an infinite path $(V_0, V_1, \ldots)$ and attach to
each vertex $V_i$ a critical branching process with progeny distribution $\tilde{p}$ in the first generation and $p$ afterwards, where $\tilde{p}$ is the size biased law of $p$ minus $1$, that is,
$$ \tilde{p} (k) = (k+1) p(k+1) \, .$$

\subsection{Incipient infinite branching random walk}
\label{ssec:IIBRW}
Let $\p^1(x,y)$ denote the $1$-step transition probability
of a random walk on $\Z^d$. We assume the following:
\begin{enumerate}
\item Exponential moment: $\sum_{x \in \Z^d} e^{b |x|} \p^1(o,x) < \infty$
for some $b > 0$.
\item Non-degeneracy: $\{ x \in \Z^d : \p^1(o,x) > 0 \}$ generates $\Z^d$ as a group.
\item Symmetry: $\p^1(x,y) = \p^1(y,x)$.
\end{enumerate}
We remark that we did not try to obtain the optimal condition on $\p^1$. In fact, conditions (ii) and (iii) are not essential for our proof,
and (i) can plausibly be replaced with a weaker condition, however, we opted to make the calculations smoother.
Likewise, we have not tried to optimize the moment condition on $p(k)$.

Given a rooted tree $T$ we define a random mapping $\Phi : T \to \Z^d \times \Z_+$
which we will call henceforth a ``random walk'' mapping. Firstly, $\Phi$ maps the root of $T$
to $(o,0)$ and recursively, given a vertex $V$ of $T$ at height $h$ and
its mapping $\Phi(V)=(x,h)$ we map each upward neighbor $U$ of $V$,
independently, by drawing a random neighbor $y$ of $x$, according to $\p^1(x, \cdot)$, and putting
$\Phi(U) = (y,h+1)$. The {\em incipient infinite branching random walk} (IIBRW)
is obtained by taking $T$ to be the IIBP.

For any tree $T$ we consider $\Phi(T)$ as a graph on the vertex set $\Z^d \times \Z_+$ and we add the edge $\{\Phi(U), \Phi(W)\}$ for any tree edge $\{U,W\}$ (there may be parallel edges). The {\em trace} of the IIBRW is simply $\Phi(T)$ where $T$ is the IIBP.
%
%
%

\subsection{Electrical resistance} We provide a brief background on the electric effective resistance of a network, for further information see \cite{LP}. Let $G=(V,E)$ be a finite connected graph with two marked vertices $a$ and $z$ (we assume here that all edge weights are $1$). The {\em effective resistance} between $a$ and $z$, denoted $\Reff(a \lr z)$, is the minimum energy $\EE(\theta)$ over all unit flows from $a$ to $z$, where $\EE(\theta) = \sum_{e \in E} \theta(e)^2$. The connection between this quantity and the simple random walk $\{S_t\}_{t \geq 0}$ on $G$ is evident via the identity (see \cite{LP}),
$$ \Reff(a \lr z) = {1 \over \deg(a) \prob_a(\tau_z < \tau_a^+)} \, ,$$
where $\deg(a)$ is the vertex degree of $a$, $\tau_z$ is the first visit time to $z$, $\tau_a^+$ is the first positive visit time to $a$ and $\prob_a$ is the simple random walk probability measure conditioned on $X_0=a$. Another useful connection is the {\em commute time identity} asserting that
\be\label{commutetime} \E_a \tau_z + \E_z \tau_a = 2 |E| \Reff(a \lr z) \, .\ee
We will frequently use the easy fact that the resistance satisfies the triangle inequality, that is, for any three vertices $x,y,z$ we have
\be\label{triangle} \Reff(x \lr z) \leq \Reff(x \lr y) + \Reff(y \lr z) \, .\ee
Lastly, we will use the {\em parallel law} for effective resistance stating that if $G_1=(V, E_1)$ and $G_2=(V,E_2)$ are two connected graphs on the same vertex set and $a,z \in V$, then the effective resistance $R_{1 \cup 2}$ between $a$ and $z$ in $(V,E_1 \cup E_2)$ (where we allow multiple edges in this union) satisfies
\be\label{parallel} {1 \over R_{1 \cup 2}} \geq {1 \over R_1} + {1 \over R_2} \, ,\ee
where $R_1, R_2$ are the effective resistances between $a$ and $z$ in $G_1, G_2$, respectively.

\subsection{Finite approximations} We use the following finite approximations to the IIBRW in order to establish recursions. \\

\noindent {\bf Definition.} Suppose $n \ge 1$ and $m \ge 2n$. Let $\T_{n,m}$ denote the following random tree:
\begin{enumerate}
\item A path of length $n$ (the {\em backbone}): $(V_0, \ldots, V_n)$ with a marked root $\rho=V_0$.
\item For each $0 \leq i \leq n-1$ attach to $V_i$ a critical branching process with progeny distribution $\tilde{p}$ in the first generation and $p$ afterwards, conditioned to die out before generation $m-i$ (i.e.~none of the vertices of the attached trees reach distance $m$ from $\rho$).
\end{enumerate}

\noindent The following is an important quantity in the proof. For $x\in \Z^d$ define
$$ \gamma(n,x) = \sup_{m \geq 2n} \E_{\T_{n,m}} \big [ \Reff\big ((o,0) \lra \Phi(V_n)\big ) \, | \,\Phi(V_n) = (x,n) \big ] \, ,$$
where we consider the resistance in the graph $\Phi(\T_{n,m})$.

It will be convenient to introduce the following norm on $\R^d$
adapted to the ``typical size'' of the random walk displacements.
We do this in order to conveniently obtain a universal estimate
on $\alpha$ of Theorem \ref{mainthmint}, but the reader may just
assume that $\p^1$ is the transition matrix of the nearest-neighbor
simple random walk and that the norm below is the Euclidean norm.

Let $Q_{ij} = \sum_{x \in \Z^d} x_i x_j \p^1(o,x)$ be the covariance
matrix of the step distribution, and let $Q^{-1}$ denote the
inverse of $Q$. We define
\eqn{e:norm}
{ \| x \|
  := \sqrt{\frac{1}{d} \sum_{i,j=1}^d x_i Q^{-1}_{ij} x_j} \, .}

The main effort in this paper is the following theorem.

\begin{theorem}
\label{mainthm}
Assume $d \leq 5$. There exists a universal constant $\alpha \in(0,1/2)$ and also
$A = A (\sigma^2, C_3, \p^1) < \infty$ such that for all $n\geq 1$
$$ \gamma(n,x) \leq A n ^{1-\alpha} \Big ( { \| x \|^2 \over n } \vee 1 \Big )^\alpha \, .$$
\end{theorem}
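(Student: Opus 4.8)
The plan is to establish the bound on $\gamma(n,x)$ by a recursive scheme in the scale $n$, using the self-similarity of the branching random walk. The starting point is to choose an auxiliary scale $k$ (a large constant) and to analyze $\T_{n,m}$ by looking at the backbone vertices $V_0, V_k, V_{2k}, \ldots$, decomposing the tree into roughly $n/k$ independent ``blocks,'' where the $j$-th block consists of the backbone segment from $V_{jk}$ to $V_{(j+1)k}$ together with all the conditioned subcritical trees hanging off it. The key structural input is Theorem \ref{goodblock}: with probability bounded below by a constant $\delta = \delta(k) > 0$, a block is \emph{good}, meaning that the trace of the block contains (with the right geometry) an intersection of two branches, each of which is itself distributed as a finite-approximation branching random walk of scale $\sim k$. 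On a good block, the parallel law \eqref{parallel} lets us bound the resistance across the block by roughly half of what two independent copies of BRW-resistance would give, producing a genuine contraction; on a bad block we only have the triangle inequality \eqref{triangle} together with the a priori (unconditional) bound on $\gamma$. The point is that good blocks occur at a positive density along the backbone, and each one shaves off a constant factor.

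Concretely, I would first prove a crude a priori bound $\gamma(n,x) \le A_0\, n \,(\|x\|^2/n \vee 1)$ — essentially linear growth — by induction using only \eqref{triangle}: break $[0,n]$ into two halves, condition on the position of $V_{n/2}$, and use the Gaussian-type tail (coming from the exponential moment assumption on $\p^1$ and the finite third moment of $p$) to control the displacement. This handles the ``bad block'' contributions. Then I would set up the main induction: assuming $\gamma(n',x) \le A n'^{1-\alpha}(\|x\|^2/n' \vee 1)^\alpha$ for all $n' < n$, I decompose at scale $k$ and write
\eqnst{
\gamma(n,x) \le \E\Big[\sum_{j} \Reff\big(\Phi(V_{jk}) \lra \Phi(V_{(j+1)k})\big)\Big] + (\text{endpoint terms}),
}
where the expectation is over the block structure and the conditioned position increments. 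For a good block I bound the block resistance using the parallel law and the induction hypothesis applied at scale $k$ to each of the two intersecting branches (this is where Theorem \ref{goodblock} gives me that the branches are honestly distributed as BRW of scale $\sim k$, so the inductive estimate applies), gaining a factor $\tfrac12$ up to lower-order corrections; for a bad block I use the a priori linear bound. Averaging, a block contributes on average at most something like $(1 - \tfrac{\delta}{2}) \cdot (\text{single-block cost})$, and summing over $\sim n/k$ blocks and choosing $\alpha$ small enough (depending only on $\delta(k)$ and $k$, hence universal once $k$ is fixed universally) closes the induction. The spatial factor $(\|x\|^2/n \vee 1)^\alpha$ is tracked through the induction by the same Gaussian concentration estimate that controls the typical displacement $\|\Phi(V_n)\| \sim \sqrt n$: deviations of order $\|x\|^2/n$ above typical are paid for by the tail but absorbed into the exponent $\alpha$.

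The main obstacle I anticipate is the interaction between the conditioning and the resistance estimate. When a block fails to be good, the attached subcritical trees are conditioned to die out early, which makes their traces ``thinner'' and the resistance \emph{larger} than unconditionally — so one cannot simply invoke an unconditioned bound, and the a priori estimate must itself be proven under the conditioning of $\T_{n,m}$, uniformly in $m \ge 2n$. Controlling this conditioned-but-larger resistance, and making sure the error terms from the endpoints and from the ``$\vee 1$'' in the spatial factor do not accumulate over the $\sim n/k$ blocks, is the delicate part; it is why $\gamma$ is defined with a supremum over $m$ and why the finite-approximation trees $\T_{n,m}$ were introduced in the first place. The self-similarity needed to even state that the intersecting branches in a good block are distributed as BRW (Theorem \ref{goodblock}) is the other crucial ingredient, and it is exactly the property that fails for oriented percolation, explaining the restriction of the method to BRW.
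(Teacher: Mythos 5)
There is a genuine gap, and it lies at the heart of the recursion: your decomposition uses blocks of \emph{constant} size $k$, with $\sim n/k$ blocks, and you apply the induction hypothesis at scale $k$. At constant scale the block resistance is $\Theta(k)=\Theta(1)$, so a constant-factor saving on a positive density of good blocks only improves the constant in a \emph{linear} bound: summing $\sim n/k$ block contributions of size $\asymp k$ gives $\Theta(n)$, never $n^{1-\alpha}$. Quantitatively, closing your induction would require $(n/k)\,A k^{1-\alpha}(1-c') \le A n^{1-\alpha}$, i.e.\ $(1-c') \le (k/n)^{\alpha}$, which fails for fixed $k$ and large $n$. The paper's proof instead cuts the backbone into a \emph{bounded} number $N \approx (K\delta)^{-1}$ of blocks of length $K\delta n$ proportional to $n$ (with $\delta$ chosen adaptively, $\delta \approx \min\{\delta_0, n/\|x\|^2\}$, which is also how the factor $(\|x\|^2/n \vee 1)^\alpha$ is produced). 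The induction hypothesis is applied at scale $\delta n$, so a block costs about $AK(\delta n)^{1-\alpha}$; Theorem \ref{goodthingshappen} gives good blocks with probability bounded below \emph{uniformly in the scale}, Theorem \ref{goodblock} improves the good-block cost to $\tfrac34 K\max_k\gamma(k)$, and the resulting per-block factor $(1-c/8)$ beats the loss $\delta^{-\alpha}$ once $\alpha$ is small relative to $\delta_0$ and $c$. It is exactly this compounding of a constant-factor gain across the multiplicative scales $n, \delta n, \delta^2 n,\dots$ that generates the exponent $\alpha$; your scheme has no mechanism for the savings to compound.

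A second, related problem is your treatment of bad blocks by ``the a priori linear bound.'' In the proportional-scale recursion this is fatal: bad blocks occur with probability bounded away from $0$, so a cost of order $\delta n$ on them would swamp the target $A(\delta n)^{1-\alpha}$. The paper must, and does, show that conditioned on each specific way a block can fail (Sections \ref{sec-badtree} and \ref{sec-badspatial}), the expected resistance is still at most roughly $(1+O(\delta)+O(\alpha))\,K A(\delta n)^{1-\alpha}$, i.e.\ essentially the inductive bound with small multiplicative corrections. This requires FKG comparisons between the conditioned trees $\T_{n,m}$ and unconditioned ones, the quantity $\bar{\gamma}(k;(x,n))$ of \eqref{l:gammabar}, and the tilted-walk estimates of Proposition \ref{prop:rw-estimates} (Lemmas \ref{gammacalc} and \ref{gammacalc2}) to control the spatial conditioning $\Phi(V_n)=(x,n)$ and atypical displacements. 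You correctly identify that conditioning on failure increases the resistance and that this is delicate, but the proposal offers no bound of the right order there, and with only a linear bad-block bound the induction cannot close. (A minor point: the positive probability of a good block is the content of Theorem \ref{goodthingshappen}; Theorem \ref{goodblock} is the resistance estimate on good blocks.)
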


\noindent {\em Remark.} Note that we cannot expect $\gamma(n,x)$ to be $O(n^{1-\alpha})$ for all $x$. Indeed, when $\|x\| = \Theta(n)$, then conditioned on $\Phi(V_n)=(x,n)$ the projection of the path $\Phi(V_0), \ldots, \Phi(V_n)$ onto $\Z^d$ has positive speed and since this conditioning does not affect the mapping of the trees hanging on $V_i$, there will be little intersections and we expect the resistance then to be linear in $n$. Theorem \ref{mainthm} will be proved by induction, hence it has to contain an estimate valid for all $x\in \Z^d$. \\

\noindent{{\bf Proof of Theorem \ref{mainthmint} assuming Theorem \ref{mainthm}.}} Recall that in the construction of the IIBRW we attach to the backbone unconditional critical trees, whereas in the definition of $\T_{n,m}$ we attach critical trees conditioned not to reach a certain level. However, when $n$ is fixed and $m \to \infty$ the distribution of these critical trees tends to the distribution of an unconditional critical tree. Hence,
%
%
$$ R(n) \leq \lim _{m \to \infty} \E_{\T_{n,m}} \big [ \Reff\big ((o,0) \lra \Phi(V_n)\big ) \big ] \, ,$$
where we bounded the resistance to generation $n$ by the resistance to a single vertex $\Phi(V_n)$. Therefore,
$$ R(n) \leq \sup_{m \geq 2n} \E_{\T_{n,m}} \big [ \Reff\big ((o,0) \lra \Phi(V_n)\big ) \big ] = \sum_{x \in \Z^d} \p^n(o,x) \gamma(n,x) \, .$$
By Theorem \ref{mainthm} we have
$$ R(n) \leq A n^{1-\alpha} \sum_{x : \|x\| \leq \sqrt{n}} \p^n(o,x)  + An^{1-\alpha} \sum_{x: \|x\| \geq \sqrt{n}} \p^n(o,x)  \Big ({\|x\|^2 \over n}\Big )^{\alpha} \, .$$
The first sum is bounded by $An^{1-\alpha}$. For the second sum we bound by
$$ \sum_{x} \p^n(o,x)  \Big ({\|x\|^2 \over n}\Big ) = 1 \, ,$$
(see Section \ref{ssec:rw-estimates}) concluding the proof. \qed
%

\subsection{Some random walk estimates}
\label{ssec:rw-estimates}
We provide here some standard random walk estimates that will be useful throughout the proof.
We denote by $\{ S(n) \}_{n \ge 0}$ a random walk with step distribution $\p^1$
and $S(0) = o$. Let $(S^1(n),\dots,S^d(n))$
denote the coordinates of $S(n)$ in a coordinate system that
diagonalizes $Q^{-1}$ (lower indices will be used for
the Euclidean coordinates). Due to independent and mean zero increments
and the definition of the norm, we have
\eqnst
{ \E \Big[ \| S(n) \|^2 \Big]
  = n \E \Big[ \| S(1) \|^2 \Big]
  = \frac{n}{d} \sum_{i,j=1}^d \E \Big[ S(1)_i Q^{-1}_{ij} S(1)_j \Big]
  = n \, .}
Applying Chebyshev's inequality, we get
\eqn{e:simple-lb}
{ \sum_{x \in \Z^d : \| x \| \le \sqrt{2}} \ \p^1(o,x)
  = 1 - \prob ( \| S(1) \|^2 > 2 )
  \ge 1/2\, .}
The central limit theorem \cite[Theorem 2.9.6]{D} implies that for any
$0 < L < \infty$ and any $v \in \R^d$, we have
\eqn{e:CLT}
{ \lim_{n \to \infty} \prob ( \| S(n) - \sqrt{n} v \| \le L \sqrt{n} )
  = C(v,d,L) \in (0, 1) \, ,}
with the constant $C(v,d,L)$ independent of $\p^1$.
%

The following proposition summarizes some estimates we will need
on the random walk $S$ conditioned on the event $\{ S(n) = x \}$.

\begin{proposition}
\label{prop:rw-estimates}
There exists $k_1 = k_1(\p^1)$, $C > 0$ and $\delta_1 = \delta_1(d) > 0$ such that the
following hold.\\
(i) Whenever $k_1 \le k \le n$, $\| x \| \le 4 n / \sqrt{k}$, we have
\eqn{e:sq-mean}
{ \E \Big[ \| S(k) \|^2 \,\Big|\, S(n) = x \Big]
  \le C k \, . }
(ii) Whenever $k_1 \le k \le \delta_1 n$, $\| x \| \le 4 n / \sqrt{k}$, we have
\eqn{e:sq-mean-cond}
{ \E \Big[ \| S(k) \|^2 \,\Big|\, S(n) = x,\, \| S(k) \| > \sqrt{k} \Big]
  \le C k \, .}
(iii) Whenever $k_1 \le k \le \delta_1 n$, $k_1 \le k' \le n-k$ and
$\| x \| \le \min\{ 4 n / \sqrt{k}, 4 n / \sqrt{k'} \}$, we have
\eqn{e:sq-mean-cond-other}
{ \E \Big[ \| S(k + k') - S(k) \|^2 \,\Big|\, S(n) = x,\, \| S(k) \| > \sqrt{k} \Big]
  \le C k' \, .}
\end{proposition}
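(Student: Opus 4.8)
The plan is to treat all three parts as consequences of a single local central limit theorem (LCLT) input together with a decomposition of the bridge into independent-ish pieces. Recall that under our hypotheses on $\p^1$ (exponential moment, symmetry, non-degeneracy), the walk $S$ satisfies a Gaussian LCLT with the usual correction terms; in particular there are constants such that $\p^n(o,x) = (1+o(1)) c_d n^{-d/2} e^{-\|x\|^2/(2n)}$ uniformly for $\|x\|\le C\sqrt n$, and a matching upper bound $\p^n(o,x)\le C n^{-d/2} e^{-c\|x\|^2/n}$ valid for all $x$, with all constants depending only on $\p^1$. This is the only nontrivial external ingredient; everything else is bookkeeping with the Markov property.

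For part (i), I would write, for $k_1\le k\le n$,
\eqnst{
\E\Big[\|S(k)\|^2\,\Big|\,S(n)=x\Big]
= \sum_{y\in\Z^d} \|y\|^2\,\frac{\p^k(o,y)\,\p^{n-k}(y,x)}{\p^n(o,x)}\,.
}
When $k\le n/2$, bound $\p^{n-k}(y,x)\le C(n-k)^{-d/2}e^{-c\|x-y\|^2/(n-k)} \le C' n^{-d/2}$ and bound $\p^n(o,x)$ from below by the LCLT (using $\|x\|\le 4n/\sqrt k \le 4\sqrt n$ when $k\ge 16$, which we may assume by enlarging $k_1$; the case of bounded $k$ is trivial since then the left side is a bounded constant). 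This reduces the sum to $C\sum_y \|y\|^2 \p^k(o,y) = Ck$ by the moment identity already recorded in the text. When $n/2< k\le n$, symmetry of the roles of the two factors lets me instead control $\|S(k)\|$ via $\|S(k)-x\|+\|x\|$: write $\|y\|^2\le 2\|y-x\|^2+2\|x\|^2$, handle the $\|x\|^2$ term using $\|x\|^2\le 16 n^2/k\le 32n$, and handle $\sum_y \|y-x\|^2\p^k(o,y)\p^{n-k}(y,x)$ by moving the Gaussian weight onto the short factor $\p^{n-k}$, giving $C(n-k)\le Cn\le Ck$ after the same LCLT lower bound on $\p^n(o,x)$. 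Combining the two ranges gives \eqref{e:sq-mean}.

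For parts (ii) and (iii) the extra conditioning on $\{\|S(k)\|>\sqrt k\}$ only costs a constant factor: since $\p(\|S(k)\|^2>2\mid S(n)=x)\le 1 - c$ for a uniform $c>0$ (this follows from (i) with a Paley--Zygmund-type two-sided estimate, or directly from the LCLT, once $k\le\delta_1 n$ forces the bridge to look genuinely diffusive at scale $k$ — this is exactly why $\delta_1$ is needed and only depends on $d$), we get $\p(\|S(k)\|>\sqrt k\mid S(n)=x)\ge c'$, and then the conditional expectation in (ii) is at most $(c')^{-1}$ times the unconditional one, which is $\le Ck$ by (i). For (iii), condition further on $S(k)=y$: by the Markov property, given $S(k)=y$ and $S(n)=x$ the increment $S(k+k')-S(k)$ is a bridge of length $k'$ from $o$ to $x-y$ over $n-k$ steps, and $\E[\|S(k')\|^2\mid S(n-k)=x-y]\le Ck'$ by part (i) applied with $(k',n-k)$ in place of $(k,n)$ — the hypothesis $\|x\|\le 4n/\sqrt{k'}$ together with $\|y\|\le$ (something controlled, after restricting to the likely event $\|y\|\le C\sqrt k$ whose complement contributes negligibly by the Gaussian tail) yields $\|x-y\|\le 4(n-k)/\sqrt{k'}$ up to adjusting constants, so part (i) applies. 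Averaging over $y$ against the conditional law of $S(k)$ (restricted to $\{\|S(k)\|>\sqrt k\}$, which again only changes constants) gives \eqref{e:sq-mean-cond-other}.

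The main obstacle is purely technical: making the LCLT bounds apply uniformly in the relevant range of $(k,n,x)$, in particular verifying that when $\|x\|$ is as large as $4n/\sqrt k$ the point $x$ still lies in the regime where the Gaussian lower bound on $\p^n(o,x)$ is valid with constants depending only on $\p^1$ — this is where the constraint $k\ge k_1$ (to force $\|x\|\lesssim\sqrt n$) and, for the conditional statements, $k\le\delta_1 n$ (to keep the bridge diffusive and the anticoncentration uniform) are consumed. Once those uniform heat-kernel bounds are in hand, each of the three inequalities is a one-line consequence of the moment identity $\E\|S(k)\|^2=k$ already in the paper.
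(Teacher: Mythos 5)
There is a genuine gap, and it sits exactly at the point your proposal dismisses as ``purely technical''. The proposition must hold for all $\|x\| \le 4n/\sqrt{k}$ with $k$ as small as a constant $k_1$, so $\|x\|$ can be of order $n$ (ballistic scale), not of order $\sqrt{n}$. Your reduction rests on the claim ``$\|x\|\le 4n/\sqrt{k}\le 4\sqrt{n}$ when $k\ge 16$'', which is false: $4n/\sqrt{k}\le 4\sqrt{n}$ requires $k\ge n$. Likewise your closing remark that the constraint $k\ge k_1$ ``forces $\|x\|\lesssim\sqrt{n}$'' is a misreading; it only forces $\|x/n\|\le 4/\sqrt{k_1}$, i.e.\ that the empirical drift of the bridge is small, not that $x$ is in the diffusive window. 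Consequently the only external input you invoke --- a Gaussian LCLT with matching upper and lower bounds valid for $\|x\|\le C\sqrt{n}$ --- does not cover the stated range: for $\sqrt{n}\ll\|x\|\lesssim n$ the probability $\p^n(o,x)$ is governed by a large-deviation rate, the naive bound $c\,n^{-d/2}e^{-\|x\|^2/(2n)}$ from below is not available, and your ratio bounds $\p^{n-k}(y,x)/\p^n(o,x)\le C e^{\|x\|^2/(2n)}$ blow up. This is precisely why the paper does not argue this way: it tilts exponentially, choosing $\beta$ with $m_\beta=x/n$, notes that the law of the increments conditioned on $S(n)=x$ coincides with that of the tilted increments conditioned on $S_\beta(n)=nm_\beta$, and then only ever needs a local CLT for the \emph{tilted} walk, uniform in small $\beta$ (Lemma \ref{lem:beta-LCLT}); part (i) then follows from the decomposition $\E[\|S(k)\|^2\mid S(n)=x]=\E[\|S(k)-\tfrac{k}{n}x\|^2\mid S(n)=x]+\tfrac{k^2}{n^2}\|x\|^2$ together with exchangeability and negative correlation of the conditioned increments, (ii) from anticoncentration of the tilted walk (this is where $\delta_1$ enters), and (iii) from exchangeability plus (ii).

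Two smaller points. First, your route to the anticoncentration bound in (ii) via ``Paley--Zygmund from (i)'' does not work as stated: (i) is only an upper bound on the conditional second moment, and Paley--Zygmund would additionally require a matching lower bound on the second moment and an upper bound on the fourth; some genuine LCLT input (for the conditioned/tilted walk, with the centre $k x/n$ at distance at most $4\sqrt{k}$ from the origin) is needed, which is what the paper supplies. Second, your proof of (iii) by re-conditioning on $S(k)=y$ and applying (i) to the bridge of length $n-k$ needs care both with the constant $4$ in the hypothesis $\|x-y\|\le 4(n-k)/\sqrt{k'}$ and with the contribution of the event $\|S(k)\|>C\sqrt{k}$, which you cannot discard without controlling the second moment on it; the paper avoids all of this by the one-line exchangeability identity $\E[\|S(k+k')-S(k)\|^2\mid S(n)=x]=\E[\|S(k')\|^2\mid S(n)=x]$. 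These secondary issues are fixable, but the ballistic-$x$ regime is not fixable within your framework without introducing the tilting (or an equivalent large-deviation local limit theorem), so the proposal as written does not prove the proposition.
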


For the proof of this proposition, we will use the exponential
moment assumption from Section \ref{ssec:IIBRW}.
Let $b_1 > 0$ be such that when $\| \beta \| < b_1$ we have
\eqnst
{ Z_\beta
  := \sum_{y \in \Z^d} e^{\beta \cdot y} \p^1(o,y)
  < \infty \, ,}
where $\cdot$ in the exponent denotes inner product with respect to
the quadratic form $\sum_{i,j} x_i Q^{-1}_{ij} y_j$.
Define the exponentially tilted step distribution
\eqnst
{ \p^1_\beta(o,y)
  = \frac{1}{Z_\beta} e^{\beta \cdot y} \p^1(o,y) }
Let $X(1), \dots, X(n)$ be i.i.d.~distributed according to $\p^1$, so that
$S(n) = X(1) + \dots + X(n)$, and let $X_\beta(1), \dots, X_\beta(n)$ be
i.i.d. distributed according to $\p^1_\beta$. Let
$S_\beta(n) = X_\beta(1) + \dots + X_\beta(n)$ and let
\eqnst
{ m_\beta
  := \E [ X_\beta(1) ]
  = \frac{1}{Z_\beta} \sum_{y \in \Z^d} y e^{\beta \cdot y} \p^1(o,y)
  = \nabla \log Z_\beta \, .}
Since the Jacobian of $\beta \mapsto m_\beta$ at $\beta = 0$ is
non-singular, for $v \in \R^d$ sufficiently close to $0$
there exists a unique $\beta$ such that $m_\beta = v$.
We write $Q_\beta$ for the covariance matrix of $X_\beta(1)$,
$D_\beta = \det(Q_\beta)^{1/2d}$, and $\| \cdot \|_\beta$ for the
norm arising from $Q_\beta^{-1}$. Note that $D_\beta$ and $\| \cdot \|_\beta$
depend continuously on $\beta$ in a neighbourhood of $0$.
In particular, for $\beta$ in a neighbourhood of $0$ we have
\eqn{e:norms-bnd}
{ \| v \|_\beta
  \le 2 \| v \| \, .}
Since $\E [ \| X(1) \|^2 ] = 1$,
for $\beta$ sufficiently close to $0$, we have
\eqn{e:Var-bnds}
{ \Sigma_\beta^2
  := \E \Big[ \| X_\beta(1) - m_\beta \|^2 \Big]
  \le 2 \, .}

We will need the following local limit theorem that is
uniform in small $\beta$.

\begin{lemma}
\label{lem:beta-LCLT}
There exists $C = C(d)$ and $0 < b_2 = b_2(\p^1) < b_1$ such that
the following hold.\\
(i) There exists $n_1 = n_1(\p^1)$ such that for all $y \in \Z^d$ we have
\eqn{e:beta-LCLT-ub}
{ \prob ( S_\beta(n) = y )
  \le \frac{2 C}{D_\beta^d n^{d/2}} \, ,}
when $n \ge n_1$, $\| \beta \| \le b_2$.\\
(ii) For any $0 < \eps < 1$ and $0 < L < \infty$ there exists
$n_2 = n_2(\p^1,\eps,L)$ such that for all $y \in \Z^d$
such that $\| y - n m_\beta \| \le L \sqrt{n}$ we have
\eqnspl{e:beta-LCLT}
{ \prob ( S_\beta(n) = y )
  &\le \frac{C(1 + \eps)}{D_\beta^d n^{d/2}} e^{ - d \| y - n m_\beta \|_\beta^2 / (2 n)}\, , \\
  \prob ( S_\beta(n) = y )
  &\ge \frac{C(1 - \eps)}{D_\beta^d n^{d/2}} e^{ - d \| y - n m_\beta \|_\beta^2 / (2 n)}\, .}
when $n \ge n_2$, $\| \beta \| \le b_2$.
\end{lemma}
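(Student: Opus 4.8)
\emph{Proof plan.} The idea is to run the classical Fourier-inversion proof of the local central limit theorem for the tilted walk $S_\beta$, while tracking the parameter $\beta$ and verifying that every estimate is uniform for $\|\beta\|\le b_2$ once $b_2$ is small enough. Write $\widehat{\p^1_\beta}(\theta):=\sum_{y\in\Z^d}e^{i\langle\theta,y\rangle}\,\p^1_\beta(o,y)=\E\big[e^{i\langle\theta,X_\beta(1)\rangle}\big]$ for the characteristic function of a $\p^1_\beta$-step, with $\langle\cdot,\cdot\rangle$ the Euclidean inner product, so that Fourier inversion on $\Z^d$ reads $\prob(S_\beta(n)=y)=(2\pi)^{-d}\int_{[-\pi,\pi]^d}e^{-i\langle\theta,y\rangle}\,\widehat{\p^1_\beta}(\theta)^n\,d\theta$. (The non-degeneracy assumption of Section~\ref{ssec:IIBRW} ensures $|\widehat{\p^1}(\theta)|<1$ for $\theta\in[-\pi,\pi]^d\setminus\{0\}$; if the walk is periodic one passes, as usual, to the sublattice on which $S_\beta(n)$ is supported, which is harmless for all later uses of the lemma.) I would fix a small $\eta>0$, depending only on $\p^1$, and split the integral into the regions $\{\|\theta\|\le\eta\}$ and $\{\eta<\|\theta\|_\infty\le\pi\}$.

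On the central region I would Taylor-expand the cumulant transform: since $\E[X_\beta(1)]=m_\beta$ and $\mathrm{Cov}(X_\beta(1))=Q_\beta$, one has $\log\widehat{\p^1_\beta}(\theta)=i\langle\theta,m_\beta\rangle-\tfrac12\langle\theta,Q_\beta\theta\rangle+O(\|\theta\|^3)$, and — this is the point of the lemma — the implied constant can be taken uniform over $\|\beta\|\le b_2$, because the exponential-moment assumption dominates the third absolute moments of all the tilted laws $\p^1_\beta$ simultaneously (recall also that $Q_\beta$, $D_\beta$ and $\|\cdot\|_\beta$ depend continuously on $\beta$, with \eqref{e:norms-bnd} and \eqref{e:Var-bnds} at hand). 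Shrinking $\eta$, we get $|\widehat{\p^1_\beta}(\theta)|\le e^{-c\langle\theta,Q_\beta\theta\rangle}$ on $\{\|\theta\|\le\eta\}$ with $c$ universal, uniformly in $\beta$. Substituting $\theta=s/\sqrt n$, the central contribution is $(2\pi)^{-d}n^{-d/2}\int_{\|s\|\le\eta\sqrt n}e^{-i\langle s,(y-nm_\beta)/\sqrt n\rangle}\,\widehat{\p^1_\beta}(s/\sqrt n)^n\,ds$; its integrand is bounded in modulus by the integrable function $e^{-c'\|s\|^2}$, and by dominated convergence (using that $|e^{O(\|s\|^3/\sqrt n)}-1|\to0$, dominated by $e^{-c'\|s\|^2}$ on $\|s\|\le\eta\sqrt n$) it lies, once $n\ge n_2(\p^1,\eps,L)$, within a factor $1\pm\eps$ of the Gaussian integral $(2\pi)^{-d}n^{-d/2}\int_{\R^d}e^{-i\langle s,w\rangle-\frac12\langle s,Q_\beta s\rangle}\,ds=(2\pi n)^{-d/2}D_\beta^{-d}\,e^{-d\|y-nm_\beta\|_\beta^2/(2n)}$, with $w=(y-nm_\beta)/\sqrt n$. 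This is where I need $\|y-nm_\beta\|\le L\sqrt n$: it keeps the Gaussian target bounded below, so the additive error from dominated convergence becomes the asserted multiplicative one, and it pins down $C=C(d)=(2\pi)^{-d/2}$. For part~(i), valid for all $y$, I would instead discard the oscillatory factor and use $|\widehat{\p^1_\beta}(s/\sqrt n)^n|\le e^{-c'\|s\|^2}$ directly, so the central region contributes at most $(2\pi)^{-d/2}(1+o(1))\,D_\beta^{-d}n^{-d/2}$.

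On the outer region $\{\eta<\|\theta\|_\infty\le\pi\}$ I would establish a uniform spectral gap: at $\beta=0$, aperiodicity plus compactness give $\sup\{|\widehat{\p^1}(\theta)|:\eta\le\|\theta\|_\infty\le\pi\}=1-2c_0<1$, and since $|\widehat{\p^1_\beta}(\theta)-\widehat{\p^1}(\theta)|\le\sum_y|e^{\beta\cdot y}/Z_\beta-1|\,\p^1(o,y)\to0$ as $\beta\to0$, uniformly in $\theta$ by dominated convergence via the exponential moment, choosing $b_2$ small forces $|\widehat{\p^1_\beta}(\theta)|\le1-c_0$ there for all $\|\beta\|\le b_2$. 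Hence the outer contribution is at most $(1-c_0)^n$, which for $n\ge n_1$ (resp. $n\ge n_2$) is negligible against $D_\beta^{-d}n^{-d/2}$ (resp. against $D_\beta^{-d}n^{-d/2}e^{-d\|y-nm_\beta\|_\beta^2/(2n)}$, which is $\ge c(d,L,b_2)\,n^{-d/2}$ on the range of~(ii)), using that $D_\beta$ is bounded above for $\|\beta\|\le b_2$; it is thus absorbed into the factor $2$ of~(i) and into the $1\pm\eps$ of~(ii).

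The main obstacle is precisely the uniformity in $\beta$: each classical ingredient — the cubic Taylor remainder for $\log\widehat{\p^1_\beta}$, the spectral gap of $\widehat{\p^1_\beta}$ away from the origin, and the continuity of $\det Q_\beta$ — must be made uniform on a fixed neighbourhood of $\beta=0$, and it is here that the exponential-moment assumption is used essentially (a finite third moment alone would not dominate the tilted moments uniformly). A secondary nuisance is constant-tracking in part~(i): one must check that the cubic error, after the rescaling $\theta=s/\sqrt n$, does not inflate $C(d)$ by more than the stated factor $2$, which is why $\eta$ has to be chosen small — uniformly in $\beta$ — before $n_1$ is chosen large.
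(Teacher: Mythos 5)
Your proposal is correct and follows essentially the same route as the paper: the paper's proof simply cites the local CLT for lattice distributions \cite[Theorem 2.5.2]{D} and remarks that the standard proof can be followed to give uniformity in $\beta$, and your Fourier-inversion argument (central-region Taylor expansion with a cubic remainder dominated uniformly via the exponential moment, uniform spectral gap on the outer region, and the Gaussian integral identifying $C(d)=(2\pi)^{-d/2}$ and the exponent $d\|y-nm_\beta\|_\beta^2/(2n)$) is precisely that standard proof with the $\beta$-uniformity made explicit. Your treatment of the factor $2$ in part (i) and of periodicity also matches the paper's intent, so no gap.
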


We assumed above that the walk has period $1$. Trivial modifications can be made to handle the
case of period $2$, and we will not make this explicit in our arguments. \\

\noindent
{\bf Proof of Lemma \ref{lem:beta-LCLT}.}
The lemma can be proved by appealing to a local central limit theorem
for lattice distributions \cite[Theorem 2.5.2]{D}.
Note that the standard proof in \cite{D} can be followed, and this
gives uniformity in $\beta$.
\qed\\

Specializing to $\beta = 0$, we denote
\eqnst
{ D
  := \det(Q)^{1/2d} \, .}
Observe that with the norm introduced in \eqref{e:norm}, we have
\eqn{e:volume}
{ \sum_{x : \| x \| \le L} 1
  \geq  c D^d L^d \, ,}
for some constant $c > 0$ and all $L \geq 1$.
When $d \ge 3$,
the Green function $G(x) := \sum_{n=0}^\infty \p^n(o,x)$ satisfies (see \cite[Theorem 4.3.5]{LL}):
\eqnspl{e:Green}
{ G(x)
  \le \frac{C(d)}{D^d} \| x \|^{2-d}, \quad \text{when $\| x \| \ge L_1 = L_1(\p^1)$} .}


It follows from Lemma \ref{lem:beta-LCLT} that
there exist $0 < b_3 = b_3(\p^1) < b_1$, $k_2 = k_2(\p^1)$ and
$c = c(d) < 1$ such that for $k \ge k_2$ and $\| \beta \| \le b_3$ we have
\eqn{e:CLT-lb}
{ \prob ( \| S_\beta(k) - k m_\beta \| \le \sqrt{k} )
  \le c \, .}
We now choose $0 < b_0 = b_0(\p^1) \le \min \{ b_2, b_3 \}$
so that Lemma \ref{lem:beta-LCLT}, \eqref{e:norms-bnd},
\eqref{e:Var-bnds} and \eqref{e:CLT-lb} all
hold when $\| \beta \| \le b_0$. We also choose now $r_0 = r_0(\p^1) > 0$
such that $\| v \| \le r_0$ implies
$\| \beta \| \le b_0$ for the the unique $\beta$ such that
$m_\beta = v$. The constants $b_0$ and $r_0$ will now be fixed for the remainer
of the paper.

We are ready to prove Proposition \ref{prop:rw-estimates}.\\

\noindent
{\bf Proof of Proposition \ref{prop:rw-estimates}.}
Choose $k_1 = k_1(\p^1)$ in such a way that $4 / \sqrt{k_1} < r_0$
and $k_1 \ge k_2(\p^1)$ for the constant $k_2$ of \eqref{e:CLT-lb}.
We also require that $k_1 \ge n_1$ and $k_1 \ge n_2(\eps = 1/2, L = 1)$
for the constants $n_1$, $n_2$ from Lemma \ref{lem:beta-LCLT}.
Fix $x$, $n$ and $k$, and let $\beta$ be such that $m_\beta = x/n$.
Note that the choice of $k_1$ and the conditions on $x$ and $n$
imply that $\| \beta \| \le b_0$.

It is easy to check that conditional on $S(n) = x$,
the joint distribution of $X(1), \dots, X(n)$ is the same as the joint
distribution of $X_\beta(1), \dots, X_\beta(n)$ conditioned on
$S_\beta(n) = x$. Consequently, the joint distribution of
$X(1) - x/n, \dots, X(n) - x/n$, given $S(n) - x = 0$
is the same as the joint distribution of
$X_\beta(1) - m_\beta, \dots, X_\beta(n) - m_\beta$,
given $S_\beta(n) - n m_\beta = 0$.
Therefore, since $\E [ S(k) \,|\, S(n) = x ] = (k/n)x$, we have
\eqnspl{e:sq-mean-bnd}
{ \E \Big[ \| S(k) \|^2 \,\Big|\, S(n) = x \Big]
  &= \E \Big[ \Big\| S(k) - \frac{k}{n} x + \frac{k}{n} x \Big\|^2 \,\Big|\, S(n) = x \Big] \\
  &= \E \Big[ \Big\| S(k) - \frac{k}{n} x \Big\|^2 \,\Big|\, S(n) = x \Big]
    + \frac{k^2}{n^2} \| x \|^2 \, .}
The second term on the right hand side is at most $16 k$, by
our assumption on $\| x \|$. The first term on the right hand side of
\eqref{e:sq-mean-bnd} equals
\eqnst
{ \E \Big[ \| S_\beta(k) - k m_\beta \|^2 \,\Big|\, S_\beta(n) = n m_\beta \Big] \, .}
Conditional on $S_\beta(n) = n m_\beta$, the variables
$X_\beta(1), \dots, X_\beta(n)$ are exchangeable, and it is easy
to use $S_\beta(n) - n m_\beta = 0$ (expanding
the variance) that $X^j_\beta(k_1)$ and $X^j_\beta(k_2)$ are negatively
correlated for all $1 \le k_1 < k_2 \le n$ and $j = 1, \dots, d$.
It follows that
\eqnst
{ \E \Big[ \| S_\beta(k) - k m_\beta \|^2 \,\Big|\, S_\beta(n) = n m_\beta \Big]
  \le k \E \Big[ \| X_\beta(1) - m_\beta \|^2 \,\Big|\, S_\beta(n) = n m_\beta \Big] \, .}
It remains to estimate the conditional expectation on the right hand side.
Using Lemma \ref{lem:beta-LCLT}, this is at most
\eqnsplst
{ \sum_{y \in \Z^d} \| y - m_\beta \|^2 \p_\beta^1(o,y) \frac{\p_\beta^{n-1}(y,x)}{\p_\beta^n(o,x)}
  &\le \sum_{y \in \Z^d} \| y - m_\beta \|^2 \p_\beta^1(o,y) \frac{4 (n-1)^{d/2}}{n^{d/2}} \\
  &\le C \E \Big[ \| X_\beta(1) - m_\beta \|^2 \Big]
  \le C \Sigma_\beta^2 \, .}
By \eqref{e:Var-bnds}, we obtain the first statement
\eqref{e:sq-mean} of the proposition.

In order to prove \eqref{e:sq-mean-cond} it is sufficient, due to the just
proven part (i), to show that $\delta_1$ can be chosen
such that $\prob ( \| S(k) \| > \sqrt{k} \,|\, S(n) = x ) \ge c' > 0$.
For this, let $c$ be the constant in \eqref{e:CLT-lb} and recall
the constant $C_1$ in \eqref{e:norms-bnd}.
Observe that if $\| y \| \le \sqrt{k}$, we have
\eqnst
{ \| (x - y) - (n - k) m_\beta \|
  = \| y - k m_\beta \|
  \le \| y \| + k \frac{\| x \|}{n}
  \le \sqrt{k} + k \frac{4n}{\sqrt{k} n}
  = 5 \sqrt{k} \, .}
Hence due to \eqref{e:norms-bnd}, $\| y - k m_\beta \|_\beta^2 \le 100 k$.
We now use Lemma \ref{lem:beta-LCLT}(ii)
with $\eps > 0$ satisfying $c (1+\eps)/(1-\eps) < (1+c)/2$. We write
\eqnsplst
{ &\prob ( \| S(k) \| \le \sqrt{k} \,|\, S(n) = x ) \\
  &\qquad = \sum_{y : \| y \| \le \sqrt{k}} \p_\beta^k(o,y) \frac{\p_\beta^{n-k}(y,x)}{\p_\beta^{n}(o,x)} \\
  &\qquad \le \sum_{y : \| y \| \le \sqrt{k}} \p_\beta^k(o,y) \frac{(1+\eps) e^{- 50 d k / (n-k)} n^{d/2}}{(1-\eps) (n-k)^{d/2}} \\
  &\qquad \le c \frac{1+\eps}{1-\eps} e^{- 50 d \delta_1 / (1 - \delta_1)} \left( 1 + \delta_1 \right)^{d/2} \\
  &\qquad \le \frac{1+c}{2} e^{-50 d \delta_1 / (1 - \delta_1)} ( 1 + \delta_1 )^{d/2} \, .}
We choose $\delta_1 = \delta_1(d)$ so that the right hand side is $< 1$,
and this proves part (ii) of the proposition.

The last statement \eqref{e:sq-mean-cond-other} now follows easily.
Due to exchangability, and part (i), we have
\eqnst
{ \E \Big[ \| S(k+k') - S(k) \|^2 \,\Big|\, S(n) = x \Big]
  = \E \Big[ \| S(k') \|^2 \,\Big|\, S(n) = x \Big]
  \le C k' \, .}
Hence the statement follows from
$\prob ( \| S(k) \| > \sqrt{k} \,|\, S(n) = x ) \ge c' > 0$
proved in part (ii).
\qed

\section{{\bf Setting up the induction scheme}}


We begin by introducing some useful notation. Given an instance of $\T_{n,m}$, consider
some small $\delta > 0$, where we assume that $\delta n$ is an integer. We write
$$ X_i = V_{i \delta n} \qquad i=0,1,\ldots, \lf \delta^{-1} \rf \, ,$$
and write $x_i \in \Z^d$, $i=0,1,\ldots, \lf \delta^{-1} \rf$ for the random spatial location
of $X_i$, that is, $x_i$ is the unique vertex satisfying $\Phi(X_i) = (x_i, i\delta n)$.
Write $\T_{n,m}(\ell)$ for the subtree of $\T_{n,m}$ emanating from $V_\ell$ off the backbone (including the vertex $V_\ell$). \\

We let $N = \lf (K \delta)^{-1} \rf - 1$, and subdivide the backbone into
$N$ stretches of length $K \delta n$, and a remaining part of length
at least $K \delta n$ and less than $2 K \delta n$.

We begin with some definitions that are depicted in Figure \ref{fig:K-tree-good}.

\begin{definition} \label{udp} For $\ell$ satisfying $i \delta n \leq \ell < (i+1)\delta n$ we say that a backbone vertex $V_\ell$ has the {\em unique descendant property} (UDP) if in $\T_{n,m}(\ell)$ it has a unique descendant at level $(i+1)\delta n$ that reaches level $(i+2)\delta n$. For any other vertex $V$ of $\T_{n,m}$ at level $i\delta n$ we say that $V$ has UDP if it has a unique descendant at level $(i+1)\delta n$ that reaches level $(i+2)\delta n$.
\end{definition}

%


\begin{definition}
\label{ktreegood}
Given an integer $K \geq 1$, a number $\delta>0$ such that $K \delta \le (1/2)$
and an instance of $\T_{n,m}$ we say that a
sequence $(i, i+1, \ldots, i+K)$ of length $K+1$ is $K$-{\em tree-good} if the following holds:
\begin{enumerate}
\item[(1)] There exists a unique $i \delta n \le \ell_1 < (i+1) \delta n$ such
that $\T_{n,m}(\ell_1)$ reaches height $(i+2)\delta n$. Moreover,
this unique $\ell_1$ satisfies $(i+1/4) \delta n \leq \ell_1 \leq (i+3/4)\delta n$.
%
%
%
%
\item[(2)] $V_{\ell_1}$ has UDP and we call the unique descendant $\YY_{i+1}$.
For all $i'$ satisfying $i+2 \leq i' \leq i+K$ we inductively define the vertices $\YY_{i'}$ of
$\T_{n,m}(\ell_1)$ as follows. We require that $\YY_{i'-1}$ has UDP
and call the unique descendant $\YY_{i'}$.
%
%
\item[(3)] There exists a unique $(i+K-1)\delta n \le \ell_2 < (i+K) \delta n$ such that
$\T_{n,m}(\ell_2)$ reaches height $(i+K+1)\delta n$. Moreover, this unique $\ell_2$ satisfies
$(i+K-3/4) \delta n \leq \ell_2 \leq (i+K-1/4) \delta n$.
%
%
\item[(4)] $V_{\ell_2}$ has UDP, and we call the unique descendant
$\XX'_{i+K}$. The vertex $\XX'_{i+K}$ has UDP,
and we call the unique descendant $\XX'_{i+K+1}$.
Similarly, $\YY_{i+K}$ has UDP,
and we call the unique descendant $\YY_{i+K+1}$.
\end{enumerate}
\end{definition}

\psfrag{Xi}{$\scrs{X_i}$}
\psfrag{Vell1}{$\scrs{V_{\ell_1}}$}
\psfrag{Xi+1}{$\scrs{X_{i+1}}$}
\psfrag{Xi+2}{$\scrs{X_{i+2}}$}
\psfrag{Xi+K-1}{$\scrs{X_{i+K-1}}$}
\psfrag{Vell2}{$\scrs{V_{\ell_2}}$}
\psfrag{Xi+K}{$\scrs{X_{i+K}}$}
\psfrag{X'i+K}{$\scrs{\XX'_{i+K}}$}
\psfrag{X'i+K+1}{$\scrs{\XX'_{i+K+1}}$}
\psfrag{Yi+1}{$\scrs{\YY_{i+1}}$}
\psfrag{Yi+2}{$\scrs{\YY_{i+2}}$}
\psfrag{Yi+K-1}{$\scrs{\YY_{i+K-1}}$}
\psfrag{Yi+K}{$\scrs{\YY_{i+K}}$}
\psfrag{Yi+K+1}{$\scrs{\YY_{i+K+1}}$}
\psfrag{dn}{$\scrs{\delta n}$}

\psfrag{xi}{$\scrs{x_i}$}
\psfrag{vell1}{$\scrs{v_{\ell_1}}$}
\psfrag{xi+1}{$\scrs{x_{i+1}}$}
\psfrag{xi+2}{$\scrs{x_{i+2}}$}
\psfrag{xi+K-1}{$\scrs{x_{i+K-1}}$}
\psfrag{vell2}{$\scrs{v_{\ell_2}}$}
\psfrag{xi+K}{$\scrs{x_{i+K}}$}
\psfrag{x'i+K}{$\scrs{x'_{i+K}}$}
\psfrag{x'i+K+1}{$\scrs{x'_{i+K+1}}$}
\psfrag{yi+1}{$\scrs{y_{i+1}}$}
\psfrag{yi+2}{$\scrs{y_{i+2}}$}
\psfrag{yi+K-1}{$\scrs{y_{i+K-1}}$}
\psfrag{yi+K}{$\scrs{y_{i+K}}$}
\psfrag{yi+K+1}{$\scrs{y_{i+K+1}}$}
\psfrag{<=sqdn}{$\scrs{\le \sqrt{\delta n}}$}

\begin{figure}
\includegraphics[scale=0.6]{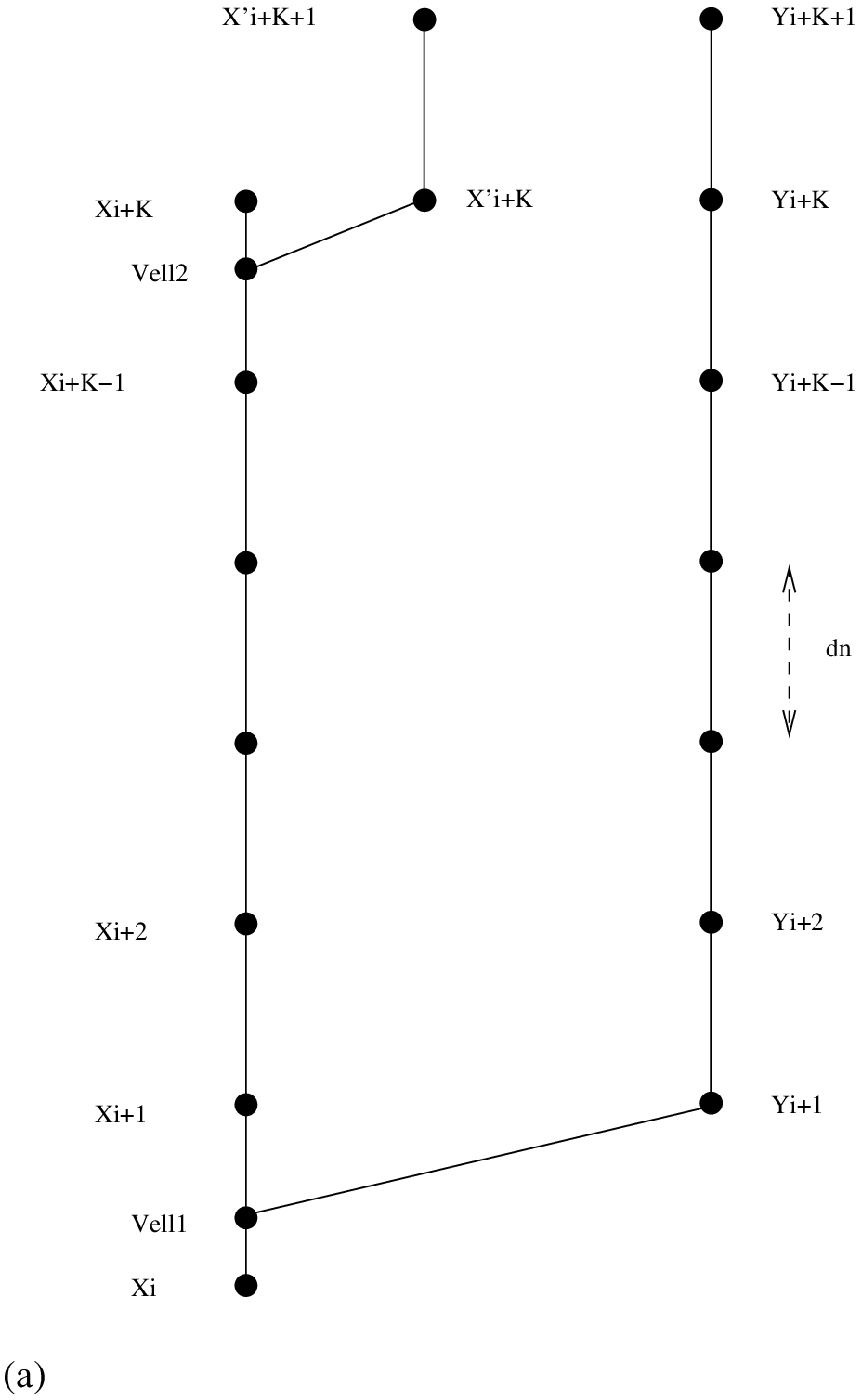}
\hskip1cm
\includegraphics[scale=0.6]{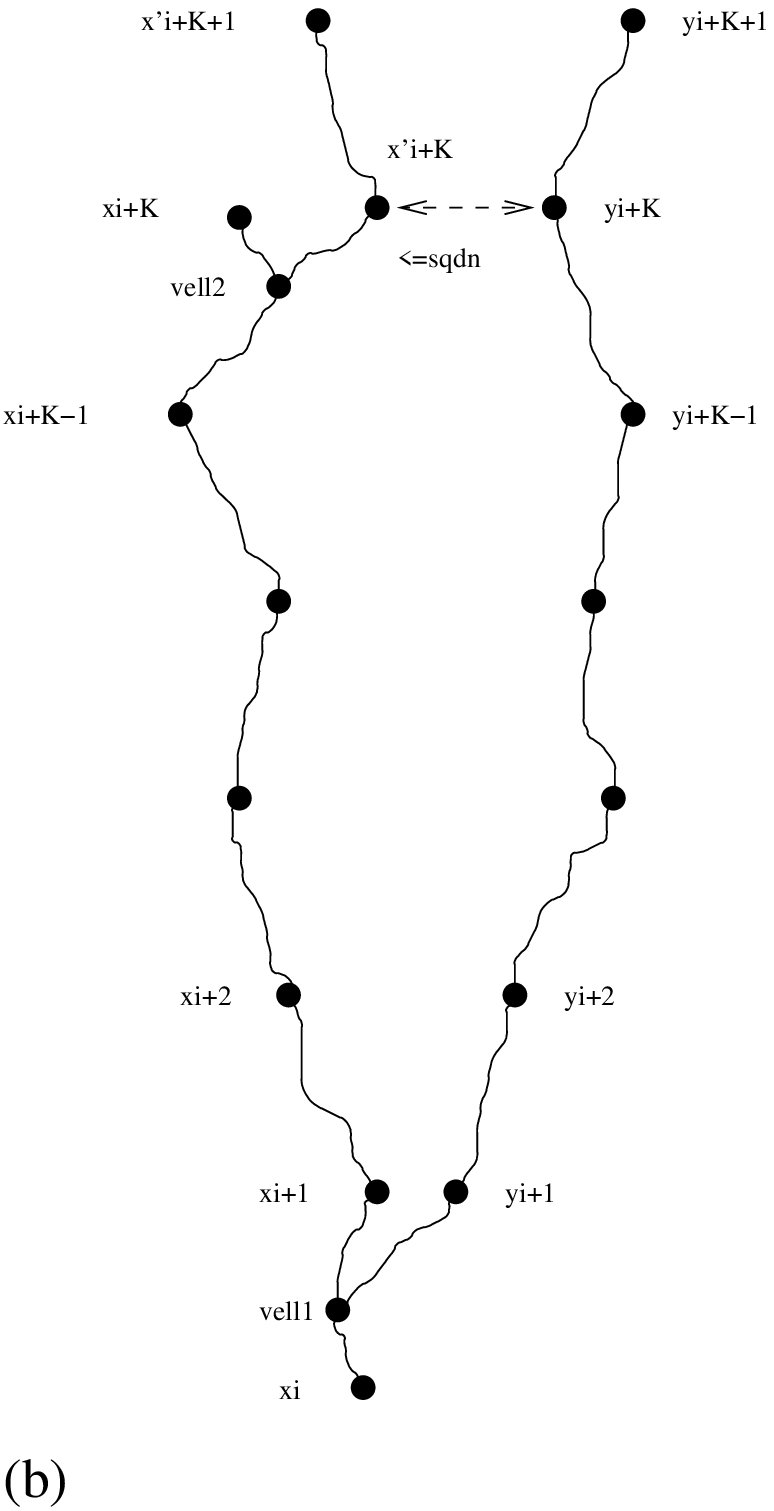}
\caption{(a) Illustration of $K$-tree-good.
(b) Illustration of $K$-spatially-good. All spatial distances
between consecutive vertices are at most
$\sqrt{\text{time difference}}$ and
the spatial distance between $x'_{i+K}$ and $y_{i+K}$
is at most $\sqrt{\delta n}$.}
\label{fig:K-tree-good}
\end{figure}

Given a $K$-tree-good sequence $(i, \ldots, i+K)$
we denote by $\V_{\ell_1}^+$ (respectively $\V_{\ell_2}^+$)
the child of $V_{\ell_1}$ (respectively $V_{\ell_2}$)
leading to $\YY_{i+1}$ (respectively $\XX'_{i+K}$).
We further define the spatial locations $y_{i'}$ by
$\Phi(\YY_{i'}) = (y_{i'}, i' \delta n)$ for
$i+1 \leq i' \leq i+K$, and
we similarly define $x'_{i+K}$, $x'_{i+K+1}$,
$v_{\ell_1}$, $v_{\ell_1}^+$, $v_{\ell_2}$, $v_{\ell_2}^+$.

We will write $U \prec W$ to denote that $W$ is a descendant of $U$, and write $h(U), h(W)$ for their respective heights in the tree (in particular, $h(W)>h(U)$).

\begin{definition}
\label{typicallyspaced}
Let $U \prec W$ be two tree vertices and let $u,w\in\Z^d$ be defined by
$\Phi(U)=(u,h(U))$ and $\Phi(W)=(w,h(W))$. We say that $U$ and $W$ are
{\em typically-spaced} if $\| w - u \| \leq \sqrt{h(W)-h(U)}$.
Denote this event by $\TS(U,W)$.
\end{definition}

\begin{definition}
\label{kspatialgood}
We say that a $K$-tree-good sequence $(i,\ldots, i+K)$ is
\emph{$K$-spatially-good} if the following holds.
\begin{enumerate}
\item[(5)] \begin{itemize}  \item $\TS(X_i, V_{\ell_1})$,
                            \item $\TS(V_{\ell_1+1}, X_{i+1})$,
                            \item For each $i+1 \leq j \leq i+K-2$ we have $\TS(X_j, X_{j+1})$,
                            \item $\TS(X_{i+K-1}, V_{\ell_2})$,
                            \item $\TS(V_{\ell_2+1}, X_{i+K})$,
            \end{itemize}
\item[(6)] \begin{itemize}  \item $\TS(V_{\ell_1}^+, \YY_{i+1})$,
                            \item For each $i+1 \leq j \leq i+K-1$ we have $\TS(\YY_j, \YY_{j+1})$,
                            \item $\TS(V_{\ell_2}^+, \XX'_{i+K})$,
                            \item $\| x'_{i+K} - y_{i+K} \| \le \sqrt{\delta n}$.
            \end{itemize}
\end{enumerate}
\end{definition}

\begin{definition} \label{def:kgood} When a sequence $(i, \ldots, i+K)$ is both $K$-tree-good
and $K$-spatially-good we say that it is $K$-{\em good}. Let $\A(i)$ be the
event that $(i, \ldots, i+K)$ is $K$-good.
\end{definition}

Next, let $(i, \ldots, i+K)$ be a $K$-good sequence and let $U_1, U_2$  be two vertices at the same height such
that $U_1 \succ \XX'_{i+K}$ and $U_2 \succ \YY_{i+K}$. Given these, we write $Z_1$ for the highest common ancestor
of $U_1$ and $\XX'_{i+K+1}$ and $Z_2$ for the highest common ancestor of $U_2$ and $\YY_{i+K+1}$
(see Figure \ref{fig:intersect}). Further, we denote by $Z_1^+$ (respectively $Z_2^+$)
the child of $Z_1$ (respectively $Z_2$) leading to $U_1$ (respectively $U_2$).

\psfrag{Phi(X'i+K)}{$\scrs{\Phi(\XX'_{i+K})}$}
\psfrag{Phi(Yi+K)}{$\scrs{\Phi(\YY_{i+K})}$}
\psfrag{Phi(X'i+K+1)}{$\scrs{\Phi(\XX'_{i+K+1})}$}
\psfrag{Phi(Yi+K+1)}{$\scrs{\Phi(\YY_{i+K+1})}$}
\psfrag{Phi(U_1) = Phi(U_2)}{$\scrs{\Phi(U_1) = \Phi(U_2)}$}
\psfrag{Phi(Z_1)}{$\scrs{\Phi(Z_1)}$}
\psfrag{Phi(Z_2)}{$\scrs{\Phi(Z_2)}$}
\psfrag{Phi(Z+_1)}{$\scrs{\Phi(Z^+_1)}$}
\psfrag{Phi(Z+_2)}{$\scrs{\Phi(Z^+_2)}$}

\begin{figure}
\includegraphics[scale=0.6]{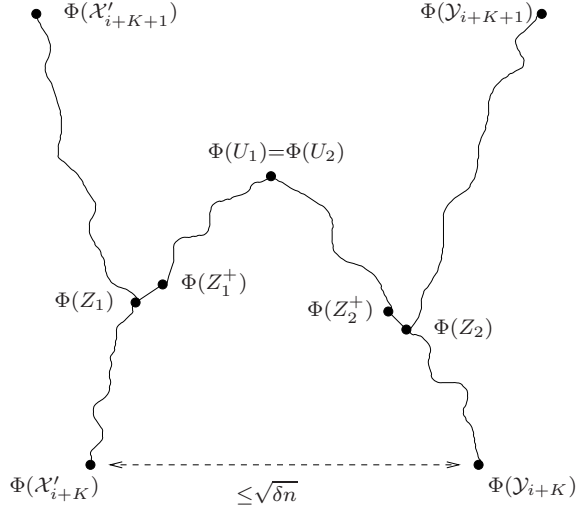}
\caption{The labelling of vertices in the two (potentially)
intersecting trees emanating from $\XX'_{i+K}$ and $\YY_{i+K}$.}
\label{fig:intersect}
\end{figure}

\begin{definition}
\label{intersect} We say that $U_1, U_2$ \emph{intersect-well} if the following conditions hold:
\begin{itemize}
\item[1.] $U_1 \succ \XX'_{i+K}$, $U_2 \succ \YY_{i+K}$,
\item[2.] $(i+K+(5/6)) \delta n \le h(U_1) = h(U_2) \le (i+K+1) \delta n$;
\item[3.] $(i+K+(1/2)) \delta n \le h(Z_1), h(Z_2) \le (i+K+(4/6)) \delta n$;
\item[4.] $\TS(\XX'_{i+K}, Z_1)$, $\TS(Z_1^+, U_1)$, $\TS(\YY_{i+K}, Z_2)$, $\TS(Z_2^+, U_2)$;
\item[5.] $\Phi(U_1) = \Phi(U_2)$.
\end{itemize}
And define the random set $\I$ by
\begin{equation}
\label{e:I-defn}
\begin{split}
  \I  = \left\{ (U_1,U_2) : \text{$U_1$ and $U_2$ intersect-well} \right\}\, ,
\end{split}
\end{equation}
Lastly, we define the event $\B(i,c_0)$ where $c_0>0$ is a constant
$$\B(i,c_0) = \A(i) \cap \big \{ |\I| \geq \frac{c_0 \sigma^4}{D^d} (\delta n)^{(6-d)/2} \big \} \, .$$


%
\end{definition}

Our first theorem is that $K$-good runs $(i, i+1, \ldots, i+K)$ occur with positive density and in each,
the probability of seeing many intersections occurs with positive probability.


\begin{theorem} [Intersections exist]
\label{goodthingshappen}
Assume that $d = 5$. There exist constants $c_0,c_1>0$ and for any $K \geq 2$ there exists $c_2 = c_2(K) > 0$,
and $n_3 = n_3(\sigma^2, C_3, \p^1, K)$ such that for any $0 < \delta < (K+4)^{-1}$, whenever $\delta n \ge n_3$ and
$x$ satisfies $\| x \| \leq \sqrt{2n / \delta}$, 
we have
$$ \prob( \A(i) \, \mid \, \Phi(V_n) = (x,n) ) \geq c_2 \, .$$
and
$$ \prob ( \B(i,c_0) \, \mid \, \A(i)\, , \Phi(V_n) = (x,n) ) \geq c_1$$
for $i = 0, K, 2K, \dots, (N-1)K$.
\end{theorem}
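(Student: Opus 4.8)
The plan is to establish the two lower bounds in Theorem~\ref{goodthingshappen} more or less separately, using the self-similar structure of $\T_{n,m}$ and the random-walk estimates of Proposition~\ref{prop:rw-estimates}. First I would prove $\prob(\A(i)\mid \Phi(V_n)=(x,n))\ge c_2$. The event $\A(i)$ is a conjunction of finitely many local tree events (UDP-type conditions on the backbone stretches $[i\delta n,(i+K)\delta n]$, the uniqueness conditions (1)--(4)) and the spatial events (5)--(6) ($\TS$ along a bounded-length chain of vertices plus the single condition $\|x'_{i+K}-y_{i+K}\|\le\sqrt{\delta n}$). For the tree part, note that each ``$V_\ell$ reaches level $(i+2)\delta n$'' or ``has UDP'' event has probability of order $1/(\delta n)$ for a fixed $\ell$, but summing over the $\delta n$ choices of $\ell$ in a stretch, and using that with probability bounded below exactly one such $\ell$ works (a second-moment / inclusion--exclusion argument on the number of level-$(i+1)\delta n$ descendants reaching level $(i+2)\delta n$, which is the standard survival asymptotics for critical Galton--Watson trees: the probability of surviving $k$ more generations is $\sim 2/(\sigma^2 k)$, and conditioned on survival the number of survivors is tight), gives a constant lower bound for conditions (1) and (3), and similarly for the UDP chain in (2) and (4). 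Crucially these events depend on disjoint portions of the tree, hence are independent, so one multiplies the constant lower bounds; the constant depends on $K$ (there are $O(K)$ of them), which is why $c_2=c_2(K)$. For the spatial part, conditionally on the tree and on $\Phi(V_n)=(x,n)$, the displacements along the backbone and along the $\YY$-branch are random-walk bridges; since $\|x\|\le\sqrt{2n/\delta}$ the relevant bridge has ``typical'' endpoint on each stretch of length $\asymp\delta n$ (Proposition~\ref{prop:rw-estimates}(i) controls the conditional second moment), so by Chebyshev each $\TS$ event holds with probability bounded below, and the off-backbone displacements are unconditioned (conditioning on $\Phi(V_n)$ does not affect the subtrees hanging off the backbone), so they too have $\TS$ with probability $\ge 1/2$ by \eqref{e:simple-lb}-type bounds; multiplying the $O(K)$ independent-enough factors gives the claim. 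The last condition $\|x'_{i+K}-y_{i+K}\|\le\sqrt{\delta n}$ is a statement about two independent displacements of length $\asymp\delta n$ from a common point, again handled by the same second-moment bound.

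For the second inequality, $\prob(\B(i,c_0)\mid\A(i),\Phi(V_n)=(x,n))\ge c_1$, the point is to show that, conditioned on $\A(i)$, the expected number of intersect-well pairs $|\I|$ is of order $(\delta n)^{(6-d)/2}=(\delta n)^{1/2}$ (when $d=5$) and that its second moment is comparable, so that by the Paley--Zygmund inequality $|\I|$ is at least a constant times its mean with probability bounded below. The first-moment computation is the heart of the matter: on $\A(i)$ we have two vertices $\XX'_{i+K}$ and $\YY_{i+K}$ at level $(i+K)\delta n$ at spatial distance $\le\sqrt{\delta n}$, each the root of a critical Galton--Watson tree (the subtree off $\XX'_{i+K}$ is truly critical up to the $m$-truncation, and on $\A(i)$ the $\YY$-branch continues through $\YY_{i+K+1}$), each of which is mapped by an independent random walk. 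The expected number of pairs of vertices $(U_1,U_2)$, one in each tree, at a common later level $t$ with $\Phi(U_1)=\Phi(U_2)$ is, by independence, $\sum_y \E[\#\{U_1\text{ at level }t\text{ with }\Phi(U_1)=y\}]\cdot\E[\#\{U_2\text{ at level }t\text{ with }\Phi(U_2)=y\}]$, which after summing over $t$ in the allowed range gives an expression of the form $\sum_{t} \sum_{y} (\text{expected tree-population at generation } s)^2 \p^s(\cdot,y)^2 \asymp \sum_s s^2 \cdot s^{-d/2}$ (using that the expected number of level-$s$ particles in a critical GW tree started from a single vertex is $1$ per generation reached, that conditioned on reaching generation $s$ there are $\asymp s$ of them, and the local CLT $\p^s(u,y)\asymp s^{-d/2}$ for $y$ in the typical range, summed over $\asymp s^{d/2}$ such $y$), which is $\asymp \sum_{s\asymp\delta n} s^{2-d/2}\asymp(\delta n)^{3-d/2}\cdot$, and dividing by the range... more carefully, the expected number of intersecting pairs at a fixed later level is $\asymp(\delta n)^{2-d/2}$ and summing over the $\asymp\delta n$ admissible levels with the height constraints (2),(3) (which only restrict $h(Z_i)$ and $h(U_i)$ to constant fractions of the last stretch) yields the advertised $(\delta n)^{(6-d)/2}$. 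The constant works out to $c_0\sigma^4/D^d$: the $\sigma^4$ comes from the two factors of (conditioned GW population size) $\asymp s/\sigma^2$... I will need to track this through the size-biasing, and the $D^{-d}$ from the local CLT normalisation in Lemma~\ref{lem:beta-LCLT}(i),(ii).

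The $\TS$-decorations in the definition of intersect-well (condition 4: $\TS(\XX'_{i+K},Z_1)$, $\TS(Z_1^+,U_1)$, etc.) are there precisely so that later, in Theorem~\ref{goodblock}, the branches leading into the intersection are themselves genuine (typically-spaced) BRW segments; for the present first-moment bound I would show that restricting to typically-spaced pairs only loses a constant factor, i.e.\ that a positive fraction of the intersecting pairs $(U_1,U_2)$ at each level have their highest common ancestor with $\XX'_{i+K+1}$ (resp.\ $\YY_{i+K+1}$) at a height in the prescribed window and with all the intermediate displacements typically-spaced. This is again a conditional-second-moment / local-CLT estimate: conditioned on $\Phi(U_1)=\Phi(U_2)=y$ with $y$ typical, the bridge from $\XX'_{i+K}$ to $y$ has typical intermediate positions, so $\TS$ along it holds with probability bounded below, and the branch point $Z_1$ with $\XX'_{i+K+1}$ lands in a constant-fraction window with probability bounded below by critical-GW tree geometry. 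For the second moment of $|\I|$, I would bound $\E[|\I|^2\mid\A(i)]$ by separating pairs of pairs that share a vertex (giving a lower-order term, essentially the first moment times a constant since a fixed $U_1$ has $O(1)$ expected partners) from genuinely disjoint pairs of pairs (whose contribution factorises, up to the dependence through the common ancestors, giving $\le C(\E|\I|)^2$), so that $\Var(|\I|\mid\A(i))=O((\E|\I|)^2)$ and Paley--Zygmund applies with a constant $c_1>0$ independent of $K,n,\delta$. The main obstacle, I expect, is the bookkeeping in this second-moment bound: one must handle the fact that the two intersection trees are conditioned (through $\A(i)$) to contain the distinguished spines $\XX'_{i+K}\prec\XX'_{i+K+1}$ and $\YY_{i+K}\prec\YY_{i+K+1}$, which biases their population sizes, and one must verify that the branch-point constraints (3) and the $\TS$-decorations (4) do not destroy the factorisation needed to get $\Var=O(\mathrm{mean}^2)$; getting the constant $c_0\sigma^4/D^d$ exactly right (so that the dependence on the progeny and walk distributions is captured only through $\sigma^2$ and $D$, as needed for the universality of $\alpha$) is the delicate part, and relies on the uniform-in-$\beta$ local CLT of Lemma~\ref{lem:beta-LCLT}.
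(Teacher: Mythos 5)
Your overall architecture is the same as the paper's: conditions (1)--(4) of $\A(i)$ are pure tree events handled stretch by stretch via critical Galton--Watson survival asymptotics and independence of disjoint subtrees, the spatial conditions are treated separately, and the second statement is proved by a first/second moment (Paley--Zygmund) argument for $|\I|$, with the two intersecting subtrees recognised as spine trees of length $\delta n$ started at spatial distance at most $\sqrt{\delta n}$. The genuine gap is in your treatment of conditions (5)--(6) under the conditioning $\Phi(V_n)=(x,n)$. You propose to get each $\TS$ event for the conditioned bridge from Proposition \ref{prop:rw-estimates}(i) plus Chebyshev, but an upper bound $\E\big[\|S(t_s)\|^2 \mid S(n)=x\big]\le C t_s$ gives no \emph{lower} bound on $\prob\big(\|S(t_s)\|\le\sqrt{t_s}\mid S(n)=x\big)$: the $\TS$ threshold is exactly $\sqrt{t_s}$, with no room for the constant $C$, and under the standing hypothesis $\|x\|\le\sqrt{2n/\delta}$ the conditional drift over a stretch $t_s\asymp\delta n$ is $(t_s/n)\|x\|$, which can already be of size $\sqrt{2t_s}$, i.e.\ outside the $\TS$ window. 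So Chebyshev cannot produce the constant $c_2(K)$; one needs an anti-concentration (local CLT) \emph{lower} bound for the conditioned walk. This is precisely what the paper supplies in Lemma \ref{lem:show-K-spatial-good}: it strengthens $\D_{(5)}$ to an event $\D_{(5)}'$ with halved thresholds (leaving room for the displacements required in (6)), shows $\D_{(5)}',\D_{(6)}$ have constant probability given the tree alone, and then shows that conditioning on $\Phi(V_n)=(x,n)$ costs only a constant factor via ratios of tilted local-CLT point probabilities (Lemma \ref{lem:beta-LCLT} with $m_\beta=x/n$). Some version of this change-of-measure/LCLT step is indispensable and is missing from your plan.

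For the second inequality your outline matches the paper (Theorem \ref{secondmomentargument} plus $\prob(V\ge\tfrac12\E V)\ge(\E V)^2/(4\E V^2)$), but note that the dichotomy ``shared-vertex pairs are lower order, disjoint pairs factorise'' is not by itself a proof in $d=5$: the second-moment bound there is a marginal diagrammatic computation in which the correlated contributions --- walks sharing their paths up to branch points at all possible separations --- are each of the same order $(\delta n)^{6-d}$ as the square of the mean. The paper handles this by dropping the $\TS$ constraints for the upper bound, removing the conditioning of the side trees (not reaching level $m$) by FKG (Lemma \ref{lem:treecount}), and controlling the walk convolutions by the Green-function estimates of Lemma \ref{lem:f-bnd}; you flag this bookkeeping as the main obstacle, correctly, but it is where the actual work lies rather than a routine factorisation.
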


To proceed let us define
$$ \gamma(n) = \sup_{x : \| x \| \leq \sqrt{n}} \gamma(n,x) \, .$$
When all the good events occur, it is immediate by definition and the triangle inequality \eqref{triangle} that the resistance between $X_i$ and $X_{i+K}$
is bounded above by $K \gamma(\delta n)$. The following theorem shows that the intersections create a ``short-cut'' in the electric circuit,
allowing us to bound the resistance between the two ends of the the run $(i, \ldots, i+K)$ using
the parallel law of electric resistance \eqref{parallel} essentially by ${3 \over 4} K \gamma(\delta n)$. This multiplicative constant improvement allows
the induction argument to work.

\begin{theorem} [Analysis of good blocks]
\label{goodblock}
There exists $K_0 < \infty$ and $n_4 = n_4(\sigma^2, C_3, \p^1)$ such that if $K \geq K_0$ and $\delta n \ge n_4$, we have
$$ \E \Big [ \Reff(\Phi(X_i) \lra \Phi(X_{i+K})) \, \mid \, \A(i), \B(i,c_0), \Phi(V_n) = (x,n) \Big ]
\leq {3 K \over 4} \max_{1 \le k \le \delta n} \gamma(k) $$
for $i = 0, K, 2K, \dots, \lf (K \delta)^{-1} \rf - 2$.
\end{theorem}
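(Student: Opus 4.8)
The plan is to exhibit, on the event $\A(i)\cap\B(i,c_0)$, an explicit sub-network of $\Phi(\T_{n,m})$ connecting $\Phi(X_i)$ to $\Phi(X_{i+K})$ whose effective resistance beats $K\gamma(\delta n)$ by a constant factor, and then take expectations. First I would describe the ``main channel'': the backbone path $\Phi(X_i),\dots,\Phi(V_{\ell_1}),\Phi(\V_{\ell_1}^+),\dots,\Phi(X_{i+K})$ together with the $\YY$-chain $\Phi(\YY_{i+1}),\dots,\Phi(\YY_{i+K})$. By $K$-tree-good these are genuine disjoint paths in the tree except near the two ends, so the trace between $\Phi(X_i)$ and $\Phi(X_{i+K})$ contains two almost-parallel conduits, each of which is a concatenation of $K$ pieces, each piece being the trace of (a typically-spaced, conditioned) BRW of height $\delta n$. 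The key structural point—this is where the self-similarity of the model is used—is that each of the $2K$ or so ``pieces'' between consecutive marked vertices is, conditionally, distributed as (a subtree of) $\T_{\delta n, m'}$ with its backbone pinned at typically-spaced endpoints, so its expected resistance is at most $\gamma(\delta n,\text{endpoint})\le \max_{k\le\delta n}\gamma(k)$ by definition of $\gamma$ and by $K$-spatially-good (which forces $\|{\rm displacement}\|\le\sqrt{\delta n}$). I would make this ``each branch is again a BRW'' claim precise as a separate lemma, invoking the recursive construction of $\T_{n,m}$ and the UDP/unique-descendant conditioning, which is exactly the conditioning defining the $\T_{\cdot,\cdot}$ trees.

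Next I would put in the short-cut. On $\B(i,c_0)$ there are at least $c_0\sigma^4 D^{-d}(\delta n)^{(6-d)/2}$ intersecting-well pairs $(U_1,U_2)$ with $U_1\succ\XX'_{i+K}$, $U_2\succ\YY_{i+K}$ and $\Phi(U_1)=\Phi(U_2)$, and the governing ancestors $Z_1,Z_2$ sit at controlled heights with all the relevant $\TS$ events holding. The idea is: route current from $\Phi(X_i)$ along the backbone conduit to $\Phi(\XX'_{i+K})$, and along the $\YY$-conduit to $\Phi(\YY_{i+K})$; then near generation $(i+K+1)\delta n$ the two conduits are welded together through the $|\I|$ coincidence points, each reachable by a short (length $O(\delta n)$) typically-spaced BRW branch of expected resistance $O(\max_{k\le\delta n}\gamma(k))$, wait—more carefully, $O(\gamma(\delta n))$ on a piece of height $\le\delta n$; and having $|\I|$ of them in parallel, by \eqref{parallel}, makes the weld have expected resistance $O\big(\gamma(\delta n)/|\I|\big)$, which on $\B(i,c_0)$ is negligible compared to one unit of $\gamma$ when $\delta n$ is large (here $d\le5$ so $(6-d)/2\ge 1/2$ and $|\I|\to\infty$). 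Consequently the two length-$K$ conduits are effectively shorted together at their far ends, so instead of using one conduit of resistance $\le K\max_k\gamma(k)$ we have a theta-shaped network; bounding its resistance by a reduced series-parallel computation (first $i$ through $i+K-1$ as two parallel conduits of total length $\sim(K-1)$, giving a factor near $1/2$ on that part, plus the last two blocks and the welds in series) yields an upper bound of the form $c\,K\max_{k\le\delta n}\gamma(k)$ with $c<1$; choosing $K\ge K_0$ absorbs the additive $O(1)$ error terms so that $c$ can be pushed below $3/4$.

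The remaining work is to take $\E[\,\cdot\mid\A(i),\B(i,c_0),\Phi(V_n)=(x,n)]$ of this random sub-network bound. Here one uses monotonicity of effective resistance under adding edges (so the sub-network bound is legitimate), linearity of expectation over the $O(K)$ series pieces, the just-proven conditional-BRW identification to replace each piece's expected resistance by $\le\max_{k\le\delta n}\gamma(k)$, and Jensen/convexity of $R\mapsto 1/R$ together with $\E[1/R_{\text{weld}}]\ge 1/\E[R_{\text{weld}}]$—actually the parallel law is applied pathwise to $|\I|$ conductances and then one takes expectations, so one needs a lower bound on the expected total conductance of the weld, i.e. an \emph{upper} bound on the expected resistance of a single intersecting branch conditioned on all the good events; this is again supplied by $\gamma(\delta n)$ up to the conditioning on $\TS$ events, whose cost is a bounded factor since $\TS$ has probability bounded below (cf. Proposition \ref{prop:rw-estimates} and \eqref{e:CLT}). \textbf{The main obstacle} I anticipate is precisely this last point: the resistance of a branch \emph{conditioned on} the various $K$-good/$\TS$/UDP events is stochastically larger than the unconditional one, and one must argue—as the introduction flags—that conditioning on $\A(i)\cap\B(i,c_0)$ does not inflate the expected resistance of the individual building blocks by more than a bounded factor, uniformly in $\delta n$. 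I would handle this by a Bayes/Radon–Nikodym argument: the good events depend on the tree combinatorics and on coarse displacement data at scale $\sqrt{\delta n}$, which are asymptotically independent of the fine structure of each block that actually determines its resistance, so the conditional law of a block is absolutely continuous w.r.t. the law of $\T_{\delta n,\cdot}$ with Radon–Nikodym derivative bounded by $c_2(K)^{-1}$ (and similar for $c_1^{-1}$), giving $\E[R_{\text{block}}\mid\text{good}]\le c_2(K)^{-1}\max_k\gamma(k)$; since $K_0$ is chosen \emph{after} these constants, the factor is harmless. The second, more bookkeeping-heavy, obstacle is organizing the theta-network resistance computation so that the $O(1)$ boundary contributions (the two end blocks near $V_{\ell_1},V_{\ell_2}$ and the welds) are provably $o(K\max_k\gamma(k))$ or at worst $O(\max_k\gamma(k))$, which is why the statement only claims the bound for $K$ large.
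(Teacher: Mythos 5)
Your overall architecture (a theta-shaped network: backbone conduit and $\YY$-conduit in parallel between $V_{\ell_1}$ and $V_{\ell_2}$, welded at the far end through the intersections, with the factor $\tfrac14(\gamma_1+\gamma_2)\ge (\gamma_1^{-1}+\gamma_2^{-1})^{-1}$ supplying the gain and $K$ large absorbing additive errors) is exactly the paper's. But two of the steps you propose would fail as written.

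First, your treatment of the conditioning destroys the multiplicative gain. You propose the crude bound $\E[R_{\mathrm{block}}\mid \mathrm{good}]\le c_2(K)^{-1}\max_k\gamma(k)$ via a Radon--Nikodym derivative bounded by $c_2(K)^{-1}$, and claim this is harmless because $K_0$ is chosen afterwards. It is not harmless: this loss multiplies the \emph{main} term, so the parallel combination gives roughly $\tfrac{1}{2}K\,c_2(K)^{-1}\max_k\gamma(k)$, which exceeds $\tfrac{3K}{4}\max_k\gamma(k)$ whenever $c_2<2/3$; increasing $K$ cannot repair a constant-factor loss on a term that already scales linearly in $K$. The paper's Lemma \ref{easyestimates} gets the per-block bound $\gamma(\cdot)$ with \emph{no} loss: conditionally on $\Ggood$ (which fixes $\ell_1,\ell_2$, the tree-structure events, and typically-spaced endpoint displacements), each block is distributed \emph{exactly} as a $\T_{k,m'}$ with pinned endpoint $y$, $\|y\|\le\sqrt{k}$, and $\gamma(k,y)$ is by definition the supremum over $m'$ of precisely this conditional expectation. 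The good event factorizes over blocks given the skeleton, so no $1/\prob(\mathrm{good})$ ever appears. Making this exact distributional identification (rather than an absolute-continuity estimate) is the load-bearing step.

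Second, your bound on the weld, $O(\gamma(\delta n)/|\I|)$ via the parallel law over the $|\I|$ intersecting branches, is false and the argument for it is invalid: the $|\I|$ paths from $\XX'_{i+K}$ to $\YY_{i+K}$ through the coincidence points all share the trunks of the two descendant trees near $\XX'_{i+K}$ and $\YY_{i+K}$, so they are not edge-disjoint and \eqref{parallel} does not yield $R/|\I|$; indeed the weld resistance is genuinely of order $\max_k\gamma(k)$, not $o(1)\cdot\max_k\gamma(k)$. The paper's Lemma \ref{hardestimate} obtains $\E[\Reff(\Phi(\XX'_{i+K})\lra\Phi(\YY_{i+K}))\mid\Ggood]\le C_5\max_k\gamma(k)$ by a different mechanism: it routes through a \emph{single} intersecting pair chosen uniformly from $\tilde\I$, so that the factor $\E[\mathbf{1}_{\U=(U_1,U_2)}\mid\cdots]=1/|\tilde\I|\le D^d/(c_0\sigma^4(\delta n)^{(6-d)/2})$ cancels the entropy of summing over all candidate pairs (which contributes $\asymp\sigma^4 D^{-d}(\delta n)^{(6-d)/2}$ by the first-moment and local CLT computations), while the branch resistances are again identified exactly as $\gamma(k_1,z_1)$ etc.\ via the doubly-rooted tree decomposition (Lemmas \ref{proboftree} and \ref{lem:main-new}), which also disposes of the size-biasing worry you raise. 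Your architecture only needs the weld to cost $O(\max_k\gamma(k))$, so the conclusion survives, but this size-biased single-witness computation is the technical heart of the proof and is absent from your proposal.
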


To complete the induction step we also need a bound on the resistance conditioned on
$\A(i)^c \cup \B(i,c_0)^c$. This is rather lengthy, since for each reason that either
$\A(i)$ or $\B(i,c_0)$ fail, we provide a different bound on the resistance which we
eventually collect together at the proof of the induction step.

\subsection{Organization}
The proof of Theorem \ref{goodthingshappen} is done in Section \ref{sec:goodthingshappen} and the proof of Theorem \ref{goodblock} is presented in Section \ref{sec:goodblocks}. The analysis of the resistance when the good events fail to occur is presented in Sections \ref{sec-badtree} and \ref{sec-badspatial}.
%


\section{Existence of intersections}
\label{sec:goodthingshappen}

In this section we prove Theorem \ref{goodthingshappen}. In Section \ref{ssec:K-good} we show that
$K$-good runs $(i, \ldots, i+K)$ occur with positive probability, proving the first statement of
Theorem \ref{goodthingshappen}. In Section \ref{ssec:enough} we show that given a $K$-good run,
there are ``enough'' intersections with positive probability, proving the second statement of
Theorem \ref{goodthingshappen}.

\subsection{Preliminaries} Recall that
$\tilde{p}(k) = (k+1) p(k+1)$ for $k \geq 0$.
We denote by $\{ \NN_n \}_{n \ge 0}$ a branching process
with $\NN_0 = 1$ and progeny distribution $p(k)$, and
by $\{ \widetilde{\NN}_n \}_{n \ge 0}$ a branching
process with $\widetilde{\NN}_0 = 1$ and progeny
$\tilde{p}(k)$ in the first generation and
progeny $p(k)$ afterwards. Note that for all
$n \ge 1$ we have $\E \NN_n = 1$ and
\eqnst
{ \E \widetilde{\NN}_n
  = \sum_{k\geq 0} k \tilde{p}(k)
  = \sum_{k\geq 1} k(k-1) p(k)
  = \Var (\NN_1)
  = \sigma^2 \, . }
We denote by $f(s)$ and $\tilde{f}(s) = f'(s)$ the generating functions
of $p$ and $\tilde{p}$, respectively. Then the generating
functions of $\NN_n$ and $\widetilde{\NN}_n$ are
$f_n(s)$ and $g_n(s) := \tilde{f}(f_{n-1}(s))$, respectively,
where $f_n(s)$ is the $n$-fold composition of $f$ with
itself.

We denote by $\theta(n) = f_n(0) = \prob ( \NN_n > 0 )$ the survival
probability of the branching process up to time $n$.
It is well known \cite{Kolm,ANbook} that
\eqn{e:Kolm}
{ {\theta(n) \sigma^2 n \over 2} \to 1 \text{ as $n \to \infty$.} }
Furthermore, there exists $n_5 = n_5 (C_3) < \infty$
such that
\eqn{e:Kolm2}
{ \frac{1}{\sigma^2 n}
  \le \theta(n)
  \le \frac{3}{\sigma^2 n}, \qquad n \ge n_5 \, . }
Moreover, there exists $n'_5 = n'_5 (C_3)$ such that we have
\eqn{e:diff-theta}
{ \theta(n) - \theta(m)
  \ge \frac{1}{2 \sigma^2 n} \, , \quad \text{whenever $m \ge 2n$, $n \ge n_5'$.} }

\begin{lemma}
\label{lem:g_n'}
For any $C > 0$ there exists $c' = c'(C) > 0$
and $n_6 = n_6(C,\sigma^2,C_3) < \infty$ such that
for all $n \ge n_6$ we have
\eqnsplst
{ f_n' \left( 1 - \frac{C}{\sigma^2 n} \right)
  &\ge c' \\
  g_n' \left( 1 - \frac{C}{\sigma^2 n} \right)
  &\ge c' \sigma^2 \, .}
\end{lemma}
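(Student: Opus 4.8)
The plan is to estimate $f_n'(s)$ and $g_n'(s)$ at $s = 1 - C/(\sigma^2 n)$ by exploiting the classical parabolic asymptotics of the branching process near its fixed point $s=1$. First I would recall that $f_n'(s) = \prod_{j=0}^{n-1} f'(f_j(s))$, which is the starting point for all such estimates. Since $f$ is convex and increasing on $[0,1]$ with $f(1)=1$, $f'(1)=1$, the iterates $f_j(s)$ increase to $1$ as $j\to\infty$ when $s<1$. Writing $u_j = 1 - f_j(s)$, a Taylor expansion of $f$ at $1$ gives $u_{j+1} = u_j - \frac{\sigma^2}{2}u_j^2 + O(u_j^3)$, and it is classical (cf.\ Kolmogorov's estimate \eqref{e:Kolm}) that this recursion forces $u_j \sim \frac{2}{\sigma^2 j}$; more precisely, for $s = 1-C/(\sigma^2 n)$ one has a two-sided bound $u_j \asymp \frac{1}{\sigma^2(j + n/C)}$ uniformly for $0 \le j \le n$, with constants depending only on $C$ and $C_3$ (the third moment controls the error term). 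This is really the only quantitative input needed; it follows from a standard comparison of the recursion for $1/u_j$ with an arithmetic progression, using that $\frac{1}{u_{j+1}} = \frac{1}{u_j} + \frac{\sigma^2}{2} + O(u_j)$ and that $\sum_j u_j = O(\log n)$ is negligible against the linear-in-$j$ main term once absorbed into constants.

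Next I would bound the product. Again by a Taylor expansion, $f'(f_j(s)) = f'(1 - u_j) \ge 1 - \sigma^2 u_j$ for $u_j$ small (and $\ge 1 - C' u_j$ for some constant $C'$ uniformly, using the bounded third moment so that $f''$ is bounded near $1$), while trivially $f'(f_j(s)) \le 1$. Hence
$$ \log f_n'(s) = \sum_{j=0}^{n-1} \log f'(f_j(s)) \ge -C'' \sum_{j=0}^{n-1} u_j \ge -C'' \sum_{j=0}^{n-1} \frac{C_1}{\sigma^2(j + n/C)} \ge -C''' \log\!\Big(\frac{n + n/C}{n/C}\Big) = -C'''\log(1+C). $$
Wait --- this only gives a constant depending on $C$, which is exactly what is claimed, so taking $c'(C) = (1+C)^{-C'''}$ works. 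The point is that the sum $\sum_{j<n} u_j$ is $O(1)$ rather than $O(\log n)$ precisely because the conditioning level $s = 1 - C/(\sigma^2 n)$ keeps $u_0 \asymp 1/n$ bounded away from $0$ on the relevant range, so the harmonic-type sum $\sum_{j=0}^{n} \frac{1}{j+n/C}$ telescopes to $\log(1+C) + o(1)$ rather than diverging. I would make this rigorous by a direct integral comparison.

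For the second inequality, recall $g_n(s) = \tilde f(f_{n-1}(s))$, so $g_n'(s) = \tilde f'(f_{n-1}(s)) \, f_{n-1}'(s)$. The factor $f_{n-1}'(s) \ge c'$ by the part just proved (with $n-1$ in place of $n$, adjusting constants). For the remaining factor, $\tilde f'(s) = f''(s)$, and $f''(1) = \sum_k k(k-1)p(k) = \sigma^2 > 0$; since $f''$ is continuous on $[0,1]$ (finite second moment) and $f_{n-1}(s) \to 1$, for $n$ large we have $f''(f_{n-1}(s)) \ge \sigma^2/2$, say. Thus $g_n'(s) \ge (\sigma^2/2)\, c' = (c'/2)\sigma^2$, giving the claim after renaming the constant $c'$.

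The main obstacle is controlling the iterate sizes $u_j = 1 - f_j(s)$ with explicit, $\sigma^2$- and $C_3$-uniform constants across the whole range $0 \le j \le n$; everything else is routine Taylor expansion and a telescoping sum. I expect the cleanest route is to prove the two-sided bound $\frac{c_1}{\sigma^2(j+n/C)} \le u_j \le \frac{c_2}{\sigma^2(j+n/C)}$ by induction on $j$, feeding in the recursion $1/u_{j+1} = 1/u_j + \sigma^2/2 + O(u_j)$ together with the \emph{a priori} bound $u_j \le \theta(j)^{-1}$-type control coming from \eqref{e:Kolm2}, and then one reads off both displayed inequalities immediately.
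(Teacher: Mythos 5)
Your proposal is correct in substance, but it takes a considerably heavier route than the paper for the first inequality. The paper's proof is a two-line convexity argument: from the chain rule, $f_n'(s)=\prod_{j=0}^{n-1}f'(f_j(s))$, and since $f_j(s)\ge s$ and $f'$ is increasing (all Taylor coefficients of $f$ are non-negative), one gets $f_n'(s)\ge (f'(s))^n\ge (1-(1-s)f''(1))^n$, which at $s=1-C/(\sigma^2 n)$ is $(1-C/n)^n\ge c'(C)$. Your analysis via the Kolmogorov-type recursion for $u_j=1-f_j(s)$ reaches the same conclusion, but observe that the only input you actually use is an upper bound on $\sum_{j<n}u_j$; for that, the monotonicity $u_j\le u_0=C/(\sigma^2 n)$ (equivalently $f_j(s)\ge s$) already gives $\sum_{j<n}u_j\le C/\sigma^2$, so the two-sided asymptotics $u_j\asymp 1/(\sigma^2(j+n/C))$ — the part of your argument that needs the most care to make uniform in the offspring law — is superfluous. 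Your treatment of the second inequality matches the paper's in outline ($g_n'(s)=f''(f_{n-1}(s))f_{n-1}'(s)$), with one small imprecision: appealing to mere continuity of $f''$ at $1$ yields an $n_6$ depending on the full progeny distribution, whereas the lemma requires $n_6=n_6(C,\sigma^2,C_3)$. The quantitative version is what the paper uses and is immediate from the third-moment hypothesis: $f''(1-u)\ge f''(1)-f'''(1)u\ge \sigma^2-C_3u$, so with $u\le C/(\sigma^2 n)$ the factor is at least $\sigma^2/2$ once $n\ge 2CC_3/\sigma^4$. With that adjustment your argument is complete, just less economical than the published one.
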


\begin{proof}
We have that
\eqn{e:f_n'}
{ f_n'(s)
  \ge \left( f'(s) \right)^n
  \ge \left( 1 - (1 - s) f''(1) \right)^n \, .}
Indeed, the first inequality follows by appealing to the chain rule and using the fact that $f_n(s) \geq s$ for $s\in[0,1]$ and that $f$ is convex. The second inequality follows from the mean-value theorem together with the fact that $f''$ is increasing (the coefficients of the Taylor series of $f$ are non-negative by definition). Substituting $s = 1 - C/\sigma^2 n$ gives the first statement (recall that $f''(1)=\sigma^2$).
For the second statement, observe that
\eqnsplst
{ g_n'(s)
  &= \tilde{f}'(f_{n-1}(s)) \, f_{n-1}'(s)
  \ge \tilde{f}'(s) \left( f'(s) \right)^{n-1}
  = f''(s) \, \left( f'(s) \right)^{n-1} \\
  &\ge \left( f''(1) - (1 - s) f'''(1) \right)
      \left( 1 - (1 - s) f''(1) \right)^{n-1} \, .}
Substituting $s = 1 - C/\sigma^2 n$ and using that $f'''(1) \leq C_3$ yields the result.
\end{proof}

\begin{lemma}
\label{survival}
There exist $n_7 = n_7(C_3) < \infty$ such that
\eqn{e:survival-uncond}
{ \frac{1}{2n}
  \le \prob ( \widetilde{\NN}_n > 0 )
  \le \frac{3}{n}, \quad \text{whenever $n \ge n_7$,} }
and
\eqn{e:survival-cond}
{ \frac{1}{4n}
  \le \prob ( \widetilde{\NN}_n > 0 \,|\, \widetilde{\NN}_m = 0 )
  \le \frac{6}{n}, \quad \text{whenever $n \ge n_7$, $m \ge 2n$.} }
\end{lemma}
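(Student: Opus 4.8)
The plan is to establish Lemma~\ref{survival} by comparing the size-biased branching process $\widetilde{\NN}$ with the ordinary critical branching process $\NN$, for which the needed estimates are already recorded in \eqref{e:Kolm}--\eqref{e:diff-theta}. The starting point is the identity $\prob(\widetilde{\NN}_n > 0) = 1 - g_n(0)$, where $g_n(s) = \tilde f(f_{n-1}(s))$. Since $\tilde f(s) = f'(s)$ and $1 - f'(s) = \int_s^1 f''(t)\,dt$, evaluating at $s = f_{n-1}(0) = 1 - \theta(n-1)$ gives
\eqnst
{ \prob(\widetilde{\NN}_n > 0) = 1 - f'(1 - \theta(n-1)) = \int_{1 - \theta(n-1)}^1 f''(t)\,dt \, . }
Because $f''$ is nondecreasing with $f''(1) = \sigma^2$ and $0 \le f'''(1) \le C_3$, the integrand is squeezed between $\sigma^2 - C_3\theta(n-1)$ and $\sigma^2$ on the interval of integration (using $f''(t) \ge f''(1) - (1-t)f'''(1)$ and convexity), so
\eqnst
{ \big(\sigma^2 - C_3\theta(n-1)\big)\,\theta(n-1) \le \prob(\widetilde{\NN}_n > 0) \le \sigma^2\,\theta(n-1) \, . }
Plugging in the two-sided bound $\tfrac{1}{\sigma^2(n-1)} \le \theta(n-1) \le \tfrac{3}{\sigma^2(n-1)}$ from \eqref{e:Kolm2}, valid once $n-1 \ge n_5$, and absorbing the $C_3\theta(n-1)^2$ correction term (which is $O(1/n^2)$, negligible against the $\Theta(1/n)$ main term for $n$ large), yields \eqref{e:survival-uncond} with a suitable $n_7 = n_7(C_3)$; one may have to be slightly generous with the constants $1/2$ and $3$ to swallow the lower-order error, but there is ample room.

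For the conditional estimate \eqref{e:survival-cond} I would write, using that the event $\{\widetilde{\NN}_m = 0\}$ entails $\{\widetilde{\NN}_n = 0\}$ when $m \ge n$,
\eqnst
{ \prob(\widetilde{\NN}_n > 0 \mid \widetilde{\NN}_m = 0)
  = \frac{\prob(\widetilde{\NN}_n > 0) - \prob(\widetilde{\NN}_m > 0)}{1 - \prob(\widetilde{\NN}_m > 0)} \, . }
The denominator is $1 - o(1)$, hence harmless up to a factor close to $1$. For the numerator, the representation above gives
\eqnst
{ \prob(\widetilde{\NN}_n > 0) - \prob(\widetilde{\NN}_m > 0)
  = f'(1 - \theta(m-1)) - f'(1 - \theta(n-1))
  = \int_{1-\theta(n-1)}^{1-\theta(m-1)} f''(t)\,dt \, , }
which, since $f''(t) \in [\sigma^2 - C_3\theta(n-1), \sigma^2]$ on this subinterval, is comparable to $\sigma^2(\theta(n-1) - \theta(m-1))$. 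Now invoke \eqref{e:diff-theta}, which gives $\theta(n-1) - \theta(m-1) \ge \tfrac{1}{2\sigma^2(n-1)}$ when $m-1 \ge 2(n-1)$ (the hypothesis $m \ge 2n$ implies this), together with the trivial upper bound $\theta(n-1) - \theta(m-1) \le \theta(n-1) \le \tfrac{3}{\sigma^2(n-1)}$. Combining, and again being a bit generous with constants to absorb the $o(1)$ factors and the $C_3\theta^2$ corrections, delivers \eqref{e:survival-cond}.

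The only real subtlety — I would not call it an obstacle — is bookkeeping the lower-order error terms and the shift from $n$ to $n-1$ so that the clean constants $1/2, 3, 1/4, 6$ genuinely hold for all $n \ge n_7$ with $n_7$ depending only on $C_3$ (through $n_5$, $n_5'$ and through the requirement that $C_3\theta(n-1)$ be, say, $\le \sigma^2/2$, which is where the third-moment bound enters). One should also double-check the condition $m \ge 2n$ in \eqref{e:survival-cond} correctly feeds into the hypothesis $m-1 \ge 2(n-1)$ of \eqref{e:diff-theta} — it does, since $m \ge 2n \ge 2n-1$. No new ideas beyond the ones already in \eqref{e:Kolm}--\eqref{e:diff-theta} and Lemma~\ref{lem:g_n'} are needed; the proof is a short computation manipulating generating functions and substituting the Kolmogorov-type asymptotics.
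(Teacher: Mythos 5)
Your proposal is correct and follows essentially the same route as the paper: the identity $\prob(\widetilde{\NN}_n>0)=1-f'(1-\theta(n-1))$, the bounds $\sigma^2-C_3(1-t)\le f''(t)\le\sigma^2$, and the inputs \eqref{e:Kolm2} and \eqref{e:diff-theta}; even the paper's union bound over first-generation offspring for the unconditional upper bound is the same computation as your estimate $1-f'(1-\theta)\le\sigma^2\theta$. One trivial slip: the containment goes the other way ($\{\widetilde{\NN}_n=0\}\subseteq\{\widetilde{\NN}_m=0\}$ for $m\ge n$), but that is precisely what your displayed identity for the conditional probability requires, so nothing is affected.
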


\begin{proof}
For the upper bound in \eqref{e:survival-uncond},
if the process survives $n$ generations, one of
the particles at generation $1$ needs to survive $n-1$ generations,
so by \eqref{e:Kolm2} we bound this probability by
\eqnsplst
{ \sum_{k=1}^\infty \tilde{p}(k) k \frac{3}{\sigma^2 n}
  \leq \frac{3}{n} \, .}
For the lower bound in \eqref{e:survival-uncond},
we write
\eqnspl{e:uncond-lb}
{ \prob ( \widetilde{\NN}_n > 0 )
  &= 1 - \tilde{f}(f_{n-1}(0))
  = 1 - f'(1 - \theta(n-1)) \\
  &\ge 1 - f' \left( 1 - \frac{1}{\sigma^2 (n-1)} \right) \\
  &\ge \frac{1}{\sigma^2 (n-1)}
       f'' \left( 1 - \frac{1}{\sigma^2 (n-1)} \right) \, ,}
where the last inequality is due to the mean-value theorem. As before, $f''(s) \geq (f''(1) - (1 - s) f'''(1))$ and $f''(1)=\sigma^2$ and $f'''(1) \leq C_3$ gives the lower bound.

%

The proof of \eqref{e:survival-cond} is quite similar.
The upper bound follows easily, since by \eqref{e:survival-uncond}
we have for large enough $n$ the inequality
\eqnst
{ \prob ( \tilde{N}_n > 0 \,|\, \widetilde{\NN}_m = 0 )
  \le 2 \prob ( \widetilde{\NN}_n > 0 ) \, .}
For the lower bound, using \eqref{e:diff-theta} we write:
\eqnspl{e:cond-lb}
{ \prob ( \widetilde{\NN}_n > 0 \,|\, \widetilde{\NN}_m = 0 )
  &\ge \prob ( \widetilde{\NN}_n > 0,\, \widetilde{\NN}_m = 0 ) \\
  &= \tilde{f}(f_{m-1}(0)) - \tilde{f}(f_{n-1}(0)) \\
  &= f'(1 - \theta(m-1)) - f'(1 - \theta(n-1)) \\
  &\ge \left( \theta(n-1) - \theta(m-1) \right) f''( 1 - \theta(n-1) ) \\
  &\ge \frac{1}{2 \sigma^2 n} f'' \left( 1 - \frac{c}{\sigma^2 n} \right) \, .}
This is now bounded from below as in \eqref{e:uncond-lb}.
\end{proof}


\subsection{$K$-good runs occurs}
\label{ssec:K-good}

The proof is broken down into a series of lemmas showing that each of the conditions involved
in a run $(i,\ldots,i+K)$ being $K$-tree-good and $K$-spatially-good (that is,
the conditions in Definitions \ref{ktreegood} and \ref{kspatialgood}) holds with probability
bounded away from $0$. For $a = 1, \dots, 6$ let $\D_{(a)}$ denote the event that
condition (a) in Definitions
\ref{ktreegood} and \ref{kspatialgood} is satisfied.

We start by analyzing the conditions in Definition \ref{ktreegood}(1)--(4).
Recall that these only involve the branching process,
hence here the conditioning on $\{ \Phi(V_n) = (x,n) \}$
present in Theorem \ref{goodthingshappen} is irrelevant.
Therefore we omit it in the lemmas below.
%

\begin{lemma}
\label{lem:l1-exists}
There exists $c > 0$ such that we have $$\prob ( \D_{(1)} ) \ge c$$
whenever $\delta n \ge n_7$ .
\end{lemma}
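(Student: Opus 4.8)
\textbf{Proof plan for Lemma \ref{lem:l1-exists}.}
Write $L = \delta n$. For $iL \le \ell < (i+1)L$ let $A_\ell$ be the event that the tree $\T_{n,m}(\ell)$ attached to the backbone at $V_\ell$ reaches height $(i+2)L$. Since $\T_{n,m}(\ell)$ is a $\widetilde{\NN}$-branching process conditioned to die before its own generation $m-\ell$, the event $A_\ell$ equals $\{\widetilde{\NN}_{g_\ell}>0 \mid \widetilde{\NN}_{m-\ell}=0\}$ with $g_\ell := (i+2)L-\ell \in (L,2L]$; crucially the events $\{A_\ell\}$ are \emph{independent} over $\ell$, as the trees hung on distinct backbone vertices are independent. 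Split the range into the inner window $W_{\rm in} = [(i+\tfrac14)L,(i+\tfrac34)L]$ and the remainder $W_{\rm out} = [iL,(i+1)L)\setminus W_{\rm in}$; by independence,
$$ \prob(\D_{(1)}) \;=\; \prob\Big( \textstyle\sum_{\ell\in W_{\rm in}} \mathbf{1}_{A_\ell} = 1 \Big)\ \prod_{\ell\in W_{\rm out}} \big(1-\prob(A_\ell)\big) , $$
so it suffices to bound both factors below by positive constants.

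To estimate $p_\ell := \prob(A_\ell)$ I apply \eqref{e:survival-cond} of Lemma \ref{survival}: its hypotheses hold because $g_\ell > L = \delta n \ge n_7$ and $m-\ell \ge 2n - (i+1)L \ge n > 4L \ge 2 g_\ell$ (using $\ell < n$ and $4\delta n < n$, which is valid since $\delta < (K+4)^{-1}\le \tfrac14$ in our setting). Hence $\tfrac{1}{4g_\ell}\le p_\ell \le \tfrac{6}{g_\ell}$, so in particular $p_\ell \le 6/L \le \tfrac12$ once $L\ge 12$. Using $1-x\ge e^{-2x}$ for $x\in[0,\tfrac12]$ and $|W_{\rm out}| < L$,
$$ \prod_{\ell\in W_{\rm out}}(1-p_\ell) \;\ge\; \exp\Big(-2\!\!\sum_{\ell\in W_{\rm out}}\!\! p_\ell\Big) \;\ge\; \exp\big(-2\cdot L\cdot 6/L\big) \;=\; e^{-12} . $$

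For the first factor set $S = \sum_{\ell\in W_{\rm in}}\mathbf{1}_{A_\ell}$. For $\ell\in W_{\rm in}$ we have $g_\ell\in(\tfrac54 L,\tfrac74 L]$, so $\tfrac{1}{7L}\le p_\ell\le \tfrac{5}{L}$, and since $\tfrac{L}{3}\le |W_{\rm in}|\le \tfrac{L}{2}+1$ this gives two-sided bounds $c_*\le \E S = \sum_{\ell\in W_{\rm in}} p_\ell \le C_*$ with absolute constants $0<c_*\le C_*<\infty$ (for $L$ large). Dropping one factor and again using $1-x\ge e^{-2x}$,
$$ \prob(S=1) \;=\; \sum_{\ell\in W_{\rm in}} p_\ell \prod_{\ell'\in W_{\rm in},\,\ell'\ne\ell}(1-p_{\ell'}) \;\ge\; \Big(\sum_{\ell\in W_{\rm in}} p_\ell\Big)\exp\Big(-2\!\!\sum_{\ell'\in W_{\rm in}}\!\! p_{\ell'}\Big) \;=\; (\E S)\,e^{-2\E S} , $$
which is bounded below by a positive constant because $t\mapsto t e^{-2t}$ is bounded away from $0$ on $[c_*,C_*]$. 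Multiplying the two bounds yields $\prob(\D_{(1)})\ge c>0$, after possibly enlarging $n_7$ to absorb the harmless thresholds (such as $L\ge 12$) used above. There is no genuine obstacle here; the only points requiring care are the independence of the $A_\ell$ (immediate from the construction of $\T_{n,m}$) and the fact that we need a \emph{two-sided} control of $\E S$ in order to bound $\prob(S=1)$ — not merely $\prob(S\ge 1)$ — from below, which is exactly what the two-sided estimate \eqref{e:survival-cond} supplies.
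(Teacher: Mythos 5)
Your proposal is correct and follows essentially the same route as the paper: independence of the trees hanging off distinct backbone vertices, the two-sided conditional survival estimate \eqref{e:survival-cond} giving $p_\ell$ of order $1/(\delta n)$, and the standard computation that exactly one success occurs and lies in the middle window $[(i+\tfrac14)\delta n,(i+\tfrac34)\delta n]$ while none occurs elsewhere. You merely spell out the elementary Poisson-type calculation that the paper leaves implicit, including the hypothesis check for \eqref{e:survival-cond}, so there is nothing to correct.
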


\begin{proof} Assume without loss of generality that $i=0$ (the proof will be the same for any $0 \leq i \leq \delta^{-1}$).
For any $\ell$ satisfying $0 \leq \ell < \delta n$, let $\D_{(1)}(\ell)$ be the event
that the random tree attached to $V_{\ell}$, that is $\T_{n,m}(\ell)$,
reaches level $2 \delta n$. 
So $\D_{(1)}$ is the event that exactly one of the events
$\{ \D_{(1)}(\ell) \}$ occurs, and that the index $\ell_1$
of that event lies between $(1/4)\delta n$ and $(3/4) \delta n$.
The events $\{\D_{(1)}(\ell)\}$ are independent, and
due to \eqref{e:survival-cond} each has probability between
$\frac{1}{8 \delta n}$ and $\frac{6}{\delta n}$.
Hence $\prob ( \D_{(1)} ) \ge c > 0$.
\end{proof}

\begin{lemma}
\label{lem:Yi's exist}
There exists $c=c(K) > 0$ such that
we have $$\prob ( \D_{(2)} \,|\, \D_{(1)},\, \ell_1 ) \ge c > 0$$
whenever $\delta n \ge \max \{ n_5, n_5', 4 n_6(3,\sigma^2,C_3), n_7 \}$.
\end{lemma}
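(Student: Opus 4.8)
The plan is to write the event $\D_{(2)}$ (taking $i=0$) as the intersection $E_0\cap E_1\cap\dots\cap E_{K-1}$ of $K$ ``unique descendant'' events, where $E_0=\{V_{\ell_1}\text{ has UDP}\}$ and, for $1\le j\le K-1$, $E_j=\{\YY_{j}\text{ has UDP}\}$, the vertices $\YY_1,\dots,\YY_{K-1}$ being the ones successively singled out by $E_0,\dots,E_{K-2}$. I would bound each $E_j$ below by a positive constant conditionally on the earlier ones, so that the product of the $K$ bounds gives $\prob(\D_{(2)}\mid\D_{(1)},\ell_1)\ge c(K)>0$. Since $\D_{(2)}$ depends only on the tree $\T_{n,m}(\ell_1)$ and the trees $\{\T_{n,m}(\ell)\}_{0\le\ell<\delta n}$ are independent, conditioning on $\{\D_{(1)},\ell_1\}$ may be replaced by conditioning on $\{\D_{(1)}(\ell_1)\}=\{\T_{n,m}(\ell_1)\text{ reaches level }2\delta n\}$. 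Recall $\T_{n,m}(\ell_1)$ is a critical tree with size-biased first generation, rooted at $V_{\ell_1}$, conditioned to die before level $m$. Because $m\ge 2n$ dwarfs all levels $\le(K+1)\delta n$ entering $\D_{(2)}$, I expect this last conditioning to cost only a bounded factor everywhere below: it reweights the offspring count at any such level, which is $O(\delta n)=o(m)$ with probability $1-o(1)$ by Chebyshev, by a factor $(1-\theta(\Theta(m)))^{O(\delta n)}=1+o(1)$ since $\theta(\Theta(m))=\Theta(1/m)$ by \eqref{e:Kolm2}; this is essentially the estimate behind \eqref{e:survival-cond}. I would do the computations for unconditioned trees and absorb this factor.

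For $E_0$, note $E_0\subseteq\D_{(1)}(\ell_1)$, so $\prob(E_0\mid\D_{(1)}(\ell_1))=\prob(E_0)/\prob(\D_{(1)}(\ell_1))$. Let $W$ be the number of descendants of $V_{\ell_1}$ at level $\delta n$; since $1\le\delta n-\ell_1=:a\in[\tfrac14\delta n,\tfrac34\delta n]$, $W$ is distributed as $\widetilde{\NN}_a$ (generating function $g_a$, mean $\sigma^2$), and given $W$ each of these $W$ subtrees independently reaches level $2\delta n$ — that is, survives $\delta n$ further generations — with probability $\theta(\delta n)$. Hence $\prob(E_0)=\prob(\text{exactly one survives})=\theta(\delta n)\,g_a'(1-\theta(\delta n))$ while $\prob(\D_{(1)}(\ell_1))=\prob(\text{at least one survives})\le\theta(\delta n)\,\E\widetilde{\NN}_a=\sigma^2\theta(\delta n)$. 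Using $\theta(\delta n)\le3/(\sigma^2\delta n)$ (by \eqref{e:Kolm2}, as $\delta n\ge n_5$), monotonicity of $g_a'$, the bound $a\le\delta n$, and Lemma~\ref{lem:g_n'} with $C=3$ (as $a\ge\tfrac14\delta n\ge n_6(3,\sigma^2,C_3)$), one gets $g_a'(1-\theta(\delta n))\ge g_a'(1-3/(\sigma^2a))\ge c'\sigma^2$, so that $\prob(E_0\mid\D_{(1)}(\ell_1))\ge c'\sigma^2\theta(\delta n)/(\sigma^2\theta(\delta n))=c'>0$.

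The events $E_1,\dots,E_{K-1}$ are treated identically. Given $E_0\cap\dots\cap E_{j-1}$ the vertex $\YY_j$ (at level $j\delta n$) is determined, and — since distinct subtrees are independent and the earlier events constrain $\YY_j$'s own subtree only through the requirement that it reaches level $(j+1)\delta n$ (plus the global ``dies before level $m$'') — the subtree of $\YY_j$ is an $\NN$-type critical tree conditioned on those events. As $E_j\subseteq\{\text{subtree of }\YY_j\text{ reaches level }(j+1)\delta n\}$ and the latter has probability $\theta(\delta n)$, the same binomial computation with $g_a$ replaced by $f_{\delta n}$ gives
\[
\prob\bigl(E_j\mid E_0,\dots,E_{j-1},\D_{(1)}(\ell_1)\bigr)\ =\ f_{\delta n}'\bigl(1-\theta(\delta n)\bigr)\ \ge\ f_{\delta n}'\!\Bigl(1-\tfrac{3}{\sigma^2\delta n}\Bigr)\ \ge\ c''\ >\ 0,
\]
by the $f_n'$-bound of Lemma~\ref{lem:g_n'} (as $\delta n\ge n_6(3,\sigma^2,C_3)$). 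Multiplying the $K$ bounds yields $\prob(\D_{(2)}\mid\D_{(1)},\ell_1)\ge c'(c'')^{K-1}=:c(K)>0$.

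I expect the real work to be not the generating-function core above but the conditioning bookkeeping: one must verify that after conditioning on $E_0\cap\dots\cap E_{j-1}$ the subtree of $\YY_j$ genuinely has the claimed law (the earlier UDP events pin down $\YY_j$ and otherwise constrain only the \emph{sibling} subtrees at each level), and that the global ``dies before level $m$'' conditioning really costs only a constant factor in each of the ratios above, for which one invokes \eqref{e:Kolm2}, \eqref{e:diff-theta} and Lemma~\ref{survival} — whence the constants $n_5$, $n_5'$, $n_7$ and $4n_6(3,\sigma^2,C_3)$ in the hypothesis on $\delta n$.
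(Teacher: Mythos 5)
Your proposal is correct and follows essentially the same route as the paper: the same decomposition of $\D_{(2)}$ into $K$ successive UDP events, the same identification of each relevant subtree as a critical tree ($\widetilde{\NN}$-type for $V_{\ell_1}$, $\NN$-type for the $\YY_j$) conditioned to survive to the next $\delta n$-level, and the same generating-function bounds via Lemma \ref{lem:g_n'}, \eqref{e:Kolm2} and Lemma \ref{survival}. The only divergence is that the paper keeps the ``dies before level $m$'' conditioning inside the exact formula (the numerator carries $\theta(\delta n)-\theta(m-\delta n)$, bounded below via \eqref{e:diff-theta}) rather than absorbing it afterwards; your absorption is sound in substance, but the correction factor is $1-O(\delta)$ (merely bounded below by a constant), not $1+o(1)$ as stated, since $\delta n/m$ need not vanish.
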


\begin{proof} Again we assume that $i=0$ (the reader will notice that we only use the fact that $m-i\delta n \geq n$). The probability that $V_{\ell_1}$ has UDP given $\D_{(1)}$ equals
$$ \frac{\sum_{k \ge 1} \prob ( \widetilde{\NN}_{\delta n - \ell_1} = k ) \,
    k \, ( \theta(\delta n) - \theta(m - \delta n) ) \,
    (1 - \theta( \delta n ))^{k-1}}
    {\prob( \widetilde{\NN}_{2 \delta n - \ell_1} > 0,\, \widetilde{\NN}_{m - \ell_1} = 0 )} \, ,$$
since $\T_{n,m}(\ell_1)$ is now conditioned to survive $2\delta n- \ell_1$ generations, but that the $m-\ell_1$-th generation died out.
Hence

\eqnspl{e:D(1)}
{ &\prob (\text{$V_{\ell_1}$ has UDP} \,|\, \D_{(1)},\, \ell_1 ) \geq \frac{\theta(\delta n) - \theta(m-\delta n)}{\prob( \widetilde{\NN}_{2 \delta n - \ell_1} > 0 )}
    g_{\delta n - \ell_1}'( 1 - \theta( \delta n ) ) \, .}
Due \eqref{e:diff-theta} and Lemmas \ref{survival} and \ref{lem:g_n'},
the right hand side of \eqref{e:D(1)} is at least a universal constant $c'>0$. %

Now, conditioned on $V_{\ell_1}$ having UDP, the descendant tree emanating from $\YY_1$ is a critical tree conditioned to survive $\delta n$ generations but not $m-\delta n$ generations. So the conditional probability that $\YY_1$ has UDP equals
$$ \frac{\sum_{k \ge 1} \prob ( \NN_{\delta n} = k ) \,
    k \, ( \theta(\delta n) - \theta(m - \delta n) ) \,
    (1 - \theta( \delta n ))^{k-1}}
    {\prob( \NN_{\delta n} > 0,\, \NN_{m - \delta n} = 0 )} \, ,$$
and similarly this is bounded below by $c'$. Iterating this argument over $\YY_2, \YY_3, \ldots \YY_{K-1}$ gives a probability of at least $c(K)=c'^K$, as required.
%
%
%
\end{proof}

\begin{lemma}
\label{lem:l2-exists}
There exists $c>0$ such that
$$\prob ( \D_{(3)} \,|\, \D_{(1)},\, \ell_1,\, \D_{(2)} ) = \prob ( \D_{(3)} ) \ge c$$
whenever $\delta n \ge n_7$.
\end{lemma}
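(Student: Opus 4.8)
The plan is to establish the two assertions of the lemma separately, the equality being the only one with any content and the bound being a repetition of Lemma~\ref{lem:l1-exists}.

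For the equality $\prob(\D_{(3)} \mid \D_{(1)},\ell_1,\D_{(2)}) = \prob(\D_{(3)})$, I would argue by disjointness of the relevant backbone subtrees. First I would observe that $\D_{(3)}$ --- that exactly one of the subtrees attached off the backbone at a level $\ell \in [(i+K-1)\delta n,(i+K)\delta n)$ reaches height $(i+K+1)\delta n$, and that its index $\ell_2$ lies in $[(i+K-3/4)\delta n,(i+K-1/4)\delta n]$ --- is measurable with respect to $\mathcal{G}_2 := \sigma\big(\T_{n,m}(\ell) : (i+K-1)\delta n \le \ell < (i+K)\delta n\big)$. On the other hand, $\D_{(1)}$ and the index $\ell_1$ are functions of $\{\T_{n,m}(\ell) : i\delta n \le \ell < (i+1)\delta n\}$, and, given $\ell_1$, the event $\D_{(2)}$ concerns only the UDP of $V_{\ell_1}$ and of $\YY_{i+1},\dots,\YY_{i+K-1}$, all of which are vertices of $\T_{n,m}(\ell_1)$ (recall $\ell_1 < (i+1)\delta n$, and that UDP of a vertex is a property internal to the subtree containing it). Hence $(\D_{(1)},\ell_1,\D_{(2)})$ is measurable with respect to $\mathcal{G}_1 := \sigma\big(\T_{n,m}(\ell) : i\delta n \le \ell < (i+1)\delta n\big)$. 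Since $K \ge 2$ the two index ranges are disjoint, and the subtrees attached to distinct backbone vertices are independent (the conditioning in the definition of $\T_{n,m}$ is applied to each attached tree separately), so $\mathcal{G}_1$ and $\mathcal{G}_2$ are independent and the equality follows.

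For the lower bound $\prob(\D_{(3)}) \ge c$, I would simply rerun the proof of Lemma~\ref{lem:l1-exists}: the event $\D_{(3)}$ is, up to the backbone shift $\ell \mapsto \ell + (K-1)\delta n$ and a harmless change of the level-of-death from $m-\ell$ to the larger value used in that block, the same event as $\D_{(1)}$. Concretely, writing $\D_{(3)}(\ell)$ for the event that $\T_{n,m}(\ell)$ reaches height $(i+K+1)\delta n$, the events $\{\D_{(3)}(\ell)\}_\ell$ are independent, $\T_{n,m}(\ell)$ is a $\widetilde{\NN}$-process conditioned to die before generation $m-\ell$, the target generation $(i+K+1)\delta n - \ell$ lies between $\delta n$ and $2\delta n$, and $m-\ell$ is at least twice this target; so \eqref{e:survival-cond} gives $\tfrac{1}{8\delta n} \le \prob(\D_{(3)}(\ell)) \le \tfrac{6}{\delta n}$ exactly as before. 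The usual estimate (at least $\delta n/2$ admissible indices, each contributing a factor $\ge \tfrac{1}{8\delta n}$, times $\prod_\ell(1-\prob(\D_{(3)}(\ell))) \ge (1-\tfrac{6}{\delta n})^{\delta n}$) then yields $\prob(\D_{(3)}) \ge c > 0$.

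I do not expect a genuine obstacle here; the lemma is essentially the observation that $\D_{(3)}$ lives on subtrees disjoint from those carrying $\D_{(1)},\ell_1,\D_{(2)}$, together with translation invariance of the argument of Lemma~\ref{lem:l1-exists}. The only points requiring care are bookkeeping: verifying that \eqref{e:survival-cond} is applicable, which uses $m \ge 2n$, the constraint $i \le (N-1)K$ (so that $(i+K)\delta n \le n$ and hence $m - \ell \ge n$), and $\delta < (K+4)^{-1}$, to check both $m-\ell \ge 2\big((i+K+1)\delta n - \ell\big)$ and $(i+K+1)\delta n - \ell \ge \delta n \ge n_7$; and spelling out precisely that $\D_{(2)}$, given $\ell_1$, depends on $\T_{n,m}(\ell_1)$ only and not on any backbone subtree at a level $\ge (i+K-1)\delta n$.
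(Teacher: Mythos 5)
Your proposal is correct and is essentially the paper's argument: the paper disposes of the lemma by saying the proof is the same as that of Lemma~\ref{lem:l1-exists}, with the equality implicitly justified exactly as you do, by the fact that $\D_{(3)}$ depends only on the subtrees attached at levels in $[(i+K-1)\delta n,(i+K)\delta n)$, which for $K\ge 2$ are independent of the subtrees at levels in $[i\delta n,(i+1)\delta n)$ carrying $\D_{(1)}$, $\ell_1$, $\D_{(2)}$. Your explicit measurability and bookkeeping checks are consistent with what the paper leaves tacit.
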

\begin{proof} The proof is the same as the proof of Lemma \ref{lem:l1-exists}.
\end{proof}

\begin{lemma}
\label{lem:X_{i+K} etc}
There exists $c>0$ such that
$$\prob ( \D_{(4)} \,|\, \D_{(1)},\, \ell_1,\, \D_{(2)},\, \D_{(3)},\, \ell_2 ) \ge c$$
whenever $\delta n \ge \max \{ n_5, n_5', 4 n_6(3,\sigma^2,C_3), n_7 \}$.
\end{lemma}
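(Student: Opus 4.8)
The plan is to follow the proof of Lemma~\ref{lem:Yi's exist} essentially verbatim, three times over. Condition (4) of Definition~\ref{ktreegood} asks for three ``unique descendant'' events: that $V_{\ell_2}$ has UDP (producing $\XX'_{i+K}$), that $\XX'_{i+K}$ has UDP (producing $\XX'_{i+K+1}$), and that $\YY_{i+K}$ has UDP (producing $\YY_{i+K+1}$). I would bound each of these conditional probabilities below by a universal positive constant and multiply.

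First I would identify the conditional laws of the subtrees involved. The conditioning events $\D_{(1)}$, $\ell_1$, $\D_{(2)}$ constrain only the subtrees $\{\T_{n,m}(\ell): i\delta n\le\ell<(i+1)\delta n\}$ together with the internal structure of $\T_{n,m}(\ell_1)$ up to level $(i+K)\delta n$ (the chain $\YY_{i+1},\dots,\YY_{i+K}$), whereas $\D_{(3)}$, $\ell_2$ constrain only the subtrees $\{\T_{n,m}(\ell): (i+K-1)\delta n\le\ell<(i+K)\delta n\}$; these two families of subtrees are disjoint, and the subtrees hanging off distinct backbone vertices are independent (the ``does not reach level $m$'' conditioning in the definition of $\T_{n,m}$ acts on each of them separately). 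Hence, conditionally on $\D_{(1)},\ell_1,\D_{(2)},\D_{(3)},\ell_2$, the following hold: $\T_{n,m}(\ell_2)$ is a tree with offspring law $\tilde p$ in the first generation and $p$ afterwards, conditioned to reach level $(i+K+1)\delta n$ but not level $m$; the subtree of $\T_{n,m}(\ell_1)$ emanating from $\YY_{i+K}$ is a critical (offspring $p$) tree conditioned to reach level $(i+K+1)\delta n$ but not level $m$; and these two objects are independent. Furthermore, after additionally conditioning on $V_{\ell_2}$ having UDP, the subtree of $\T_{n,m}(\ell_2)$ emanating from $\XX'_{i+K}$ is a critical tree conditioned to reach level $(i+K+1)\delta n$ but not level $m$, and this is unaffected by the $\YY_{i+K}$-side events.

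With these descriptions in hand, each of the three UDP probabilities is estimated by the same computation as in the proof of Lemma~\ref{lem:Yi's exist}. For $V_{\ell_2}$, the probability equals a ratio whose numerator is $\big(\theta(\delta n)-\theta(m-(i+K)\delta n)\big)\,g_{(i+K)\delta n-\ell_2}'\big(1-\theta(\delta n)\big)$ and whose denominator is at most $\prob(\widetilde{\NN}_{(i+K+1)\delta n-\ell_2}>0)$; since $\ell_2\in[(i+K-\tfrac{3}{4})\delta n,(i+K-\tfrac{1}{4})\delta n]$ keeps the relevant numbers of generations between $\tfrac{1}{4}\delta n$ and $\tfrac{5}{4}\delta n$, the estimates \eqref{e:diff-theta}, \eqref{e:Kolm2}, Lemma~\ref{survival} and Lemma~\ref{lem:g_n'} (with $C=3$) bound this below by a universal $c_1'>0$ whenever $\delta n\ge\max\{n_5,n_5',4n_6(3,\sigma^2,C_3),n_7\}$ --- this is exactly the estimate made there for $V_{\ell_1}$. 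For $\XX'_{i+K}$ and for $\YY_{i+K}$ the relevant subtree is critical, so the factor $f_{\delta n}'(1-\theta(\delta n))$ replaces the $g$-factor, and the same manipulation (made there for $\YY_1$) gives a lower bound $c_2'>0$. Conditioning sequentially --- first on the $\YY_{i+K}$-UDP event, then on the $V_{\ell_2}$-UDP event, then on the $\XX'_{i+K}$-UDP event, invoking the independence noted above at each step --- we obtain $\prob(\D_{(4)}\mid\D_{(1)},\ell_1,\D_{(2)},\D_{(3)},\ell_2)\ge c_1'(c_2')^2=:c>0$.

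The only genuine obstacle is the bookkeeping in the second paragraph: correctly reading off the conditional laws of the subtrees and verifying that conditioning on the ``$\ell_1$-side'' events leaves the ``$\ell_2$-side'' subtrees with their obvious conditional law, and vice versa. Once this is granted, the three constant lower bounds are verbatim reruns of the computations in Lemma~\ref{lem:Yi's exist}; the hypothesis $0<\delta<(K+4)^{-1}$ guarantees that all the ``$m\ge 2\cdot(\text{number of generations})$'' conditions needed to apply \eqref{e:diff-theta} and Lemma~\ref{survival} are automatically satisfied.
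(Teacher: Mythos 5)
Your proposal is correct and matches the paper's approach: the paper's proof of Lemma \ref{lem:X_{i+K} etc} is literally a reference back to Lemma \ref{lem:Yi's exist}, and your argument is the expected elaboration of that reduction — identifying the conditional laws of the subtrees at $\ell_2$, $\XX'_{i+K}$ and $\YY_{i+K}$, using independence of branches off distinct backbone vertices, and rerunning the ratio estimates via \eqref{e:diff-theta}, Lemma \ref{survival} and Lemma \ref{lem:g_n'}. The bookkeeping of conditional subtree laws you carry out is exactly the content the paper leaves implicit.
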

\begin{proof}
This is proved almost identically to Lemma \ref{lem:Yi's exist}.
\end{proof}

We next show that the conditions in Definition \ref{kspatialgood}(5)--(6)
also hold with probability bounded away from $0$.

\begin{lemma}
\label{lem:show-K-spatial-good}
There exists $c=c(K) > 0$ and $n_8 = n_8(\p^1,K)$ such that whenever
$0 < \delta < 1/(K+4)$, $\delta n \ge n_8$ and
$\| x \| \leq \sqrt{2n / \delta}$, 
we have
\eqn{e:(5)-(6)}
{ \prob \big( \D_{(5)}, \D_{(6)} \,|\,
    \D_{(1)},\, \ell_1,\, \D_{(2)}, \D_{(3)},\,
    \ell_2,\, \D_{(4)},\, \Phi(V_n) = (x,n) \big)
  \ge c \, .}
\end{lemma}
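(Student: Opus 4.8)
The plan is to condition first on the ``tree part'' $\mathcal G := \{\D_{(1)}, \ell_1, \D_{(2)}, \D_{(3)}, \ell_2, \D_{(4)}\}$, which fixes the backbone $(V_0,\dots,V_n)$ together with all the marked tree vertices $V_{\ell_1},V_{\ell_2},\YY_{i+1},\dots,\YY_{i+K},\XX'_{i+K}$ as \emph{vertices of the tree}, and hence fixes the tree-paths joining them. Given $\mathcal G$, the map $\Phi$ is built from mutually independent increments distributed as $\p^1$ on the tree edges, and conditioning further on $\{\Phi(V_n)=(x,n)\}$ affects only the $n$ increments along the backbone; the increments along the three relevant chains — the backbone $V_0\to\cdots\to V_n$, the ``$\YY$-chain'' $\V_{\ell_1}^+\to\YY_{i+1}\to\cdots\to\YY_{i+K}$ (together with the edge $V_{\ell_1}\to\V_{\ell_1}^+$), and the ``$\XX'$-chain'' $\V_{\ell_2}^+\to\XX'_{i+K}$ (together with $V_{\ell_2}\to\V_{\ell_2}^+$) — live on pairwise disjoint edge sets, the last two unaffected by the conditioning on $\Phi(V_n)$. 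All the $\TS$-events of condition (5) in Definition \ref{kspatialgood} involve only backbone displacements; those in the first three bullets of condition (6) involve only $\YY$-chain displacements; and $\TS(\V_{\ell_2}^+,\XX'_{i+K})$ together with $\|x'_{i+K}-y_{i+K}\|\le\sqrt{\delta n}$ involve the $\XX'$-chain relative to already-revealed quantities.

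Accordingly I would reveal the three chains in this order. (1) \emph{Backbone.} Conditioned on $\Phi(V_n)=(x,n)$, the backbone increments are i.i.d. under the exponentially tilted law $\p^1_\beta$ with $m_\beta=x/n$ (and $\|\beta\|\le b_0$, since $\|x\|\le\sqrt{2n/\delta}$ makes $\|m_\beta\|$ small), conditioned to sum to $x$; the conditional mean displacement across any stretch of length $\ell\in[\tfrac14\delta n,\delta n]$ has norm at most $\tfrac{\ell}{n}\|x\|\le\sqrt{2\delta n}$. Prescribing $\asymp\sqrt{\delta n}$-balls at the checkpoints $x_i,v_{\ell_1},v_{\ell_1+1},x_{i+1},\dots,x_{i+K-1},v_{\ell_2},v_{\ell_2+1},x_{i+K}$ — the balls at $x_i,x_{i+K}$ centred near the bridge mean so $\|x_{i+K}-x_i\|$ is typical, the intermediate balls chosen so that consecutive ones are within the required $\TS$-radii and, moreover, the ball at $v_{\ell_2}$ is within $\tfrac14\sqrt{\delta n}$ of that at $v_{\ell_1}$ — the uniform local limit theorem of Lemma \ref{lem:beta-LCLT} shows the backbone visits this configuration with probability $\ge c(K)>0$ (the constant absorbing the exponentially small, but $K$-fixed, cost of the bridge ``returning'' to $v_{\ell_1}$ when $K$ is large). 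On this event all $\TS$-events of condition (5) hold and $\|v_{\ell_2}-v_{\ell_1}\|\le\tfrac14\sqrt{\delta n}$. (2) \emph{$\YY$-chain.} Given the revealed backbone, the $\YY$-chain increments are still i.i.d.\ $\p^1$; using \eqref{e:simple-lb} for the edge $V_{\ell_1}\to\V_{\ell_1}^+$ and Lemma \ref{lem:beta-LCLT} (with $\beta=0$) for the $\le K$ subsequent stretches, one shows that with probability $\ge c(K)>0$ the first three bullets of condition (6) hold and simultaneously $\|y_{i+K}-v_{\ell_2}\|\le\tfrac12\sqrt{\delta n}$; here one ``aims'' the chain by prescribing, at each $\YY_j$, a ball whose centre advances along the segment from $\V_{\ell_1}^+$ toward $v_{\ell_2}$, each ball small enough to force $\TS(\YY_j,\YY_{j+1})$. (3) \emph{$\XX'$-chain.} Finally, $x'_{i+K}=v_{\ell_2}+\big(\text{sum of }(i+K)\delta n-\ell_2\text{ i.i.d.\ }\p^1\text{ steps}\big)$; since the target $y_{i+K}-v_{\ell_2}$ now has norm $\le\tfrac12\sqrt{\delta n}$ and the number of steps is $\asymp\delta n$, Lemma \ref{lem:beta-LCLT} gives that with probability $\ge c>0$ this sum lands in the $\sqrt{\delta n}$-ball about $y_{i+K}-v_{\ell_2}$ while also respecting $\TS(\V_{\ell_2}^+,\XX'_{i+K})$, yielding the last bullet of condition (6). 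Multiplying the three conditional lower bounds gives \eqref{e:(5)-(6)}.

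The routine ingredient throughout is that when a displacement is a sum of $k\asymp\delta n$ i.i.d.\ $\p^1$-steps (or a bridge increment of comparable length) whose conditional mean has norm $O(\sqrt k)$, the probability of landing in a prescribed $\asymp\sqrt{\delta n}$-ball about any point of norm $O_K(\sqrt{\delta n})$ is bounded below by a $K$-dependent constant, uniformly in $n$ — this is exactly what Lemma \ref{lem:beta-LCLT} and Proposition \ref{prop:rw-estimates} provide — and the events factor because the three chains are edge-disjoint. The main obstacle is that the constant in the definition of $\TS$ is exactly $1$ and hence tight: the conditional mean displacement of a backbone stretch can be as large as $\sqrt2$ times the radius $\sqrt{\delta n}$ it must lie in, and, more seriously, the typically-spaced $\YY$-chain must return to within $O(\sqrt{\delta n})$ of the backbone position $v_{\ell_2}$ across $K-1$ stretches even though the backbone may itself have drifted a distance comparable to $K\sqrt{\delta n}$. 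One therefore cannot use crude second-moment bounds, must work with the sharp local limit theorem, and for large $K$ must additionally condition the backbone to have an atypically small (but only $K$-dependently costly) displacement across the run. Bookkeeping of these $K$-dependent constants and of the side conditions needed to legitimise the successive conditionings is where the bulk of the work lies.
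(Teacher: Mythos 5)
Your proposal is correct in substance and rests on the same two pillars as the paper's proof: condition on the whole tree (so that the spatial part is a product of independent $\p^1$-increments on three edge-disjoint chains, only the backbone being affected by $\{\Phi(V_n)=(x,n)\}$), prescribe a ``corridor'' of $\asymp\sqrt{\delta n}$-balls at the checkpoints, and use the uniform tilted local CLT (Lemma \ref{lem:beta-LCLT}) to show the corridor costs only a $K$-dependent constant despite the endpoint conditioning. The organization differs: the paper strengthens $\D_{(5)}$ to an event $\D_{(5)}'$ with half the allowed displacements (plus $O(1)$ jumps at $\ell_1,\ell_2$), proves the \emph{unconditional} bound $\prob(\D_{(5)}',\D_{(6)}\mid\T_{n,m})\ge c$ by independence, CLT and the $1/2$-slack tracking of the backbone by the $\YY$-chain, and then removes the conditioning on $\Phi(V_n)=(x,n)$ by the ratio estimate \eqref{e:srwestimate}, comparing $\p_\beta^{n-K\delta n}(o,x-z)$ to $\p_\beta^n(o,x)$ via Lemma \ref{lem:beta-LCLT} with $L=5\sqrt K$; you instead work directly under the bridge measure and use a more wasteful corridor in which the backbone returns to within $\frac14\sqrt{\delta n}$ of $v_{\ell_1}$ at time $\ell_2$, paying an $e^{-cK}$ (still $K$-fixed) price. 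Both give $c=c(K)$, so your route is fine, but one sentence of your step (1) must be dropped: you cannot centre the balls at $x_i,x_{i+K}$ ``near the bridge mean so $\|x_{i+K}-x_i\|$ is typical'' while also demanding the return of $v_{\ell_2}$ to near $v_{\ell_1}$, since $\TS(X_i,V_{\ell_1})$, $\TS(X_{i+K-1},V_{\ell_2})$ and $\TS(V_{\ell_2+1},X_{i+K})$ then force $\|x_{i+K}-x_i\|=O(\sqrt{\delta n})$, whereas the bridge mean over the run can be as large as $\sqrt2\,K\sqrt{\delta n}$ when $\|x\|\approx\sqrt{2n/\delta}$; as you yourself note at the end, the run displacement must be conditioned to be atypically small, and it is exactly this (or the paper's $\D_{(5)}'$ device) that carries the argument, not a ``typical'' bridge increment.
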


\begin{proof} Let us condition on the entire branching process tree $\T_{n,m}$
in which $\D_{(1)}$--$\D_{(4)}$ hold. It will be convenient to consider the event $\D_{(5)}' \subset \D_{(5)}$ where we replace the requirements in Definition \ref{kspatialgood},(5) by
\begin{enumerate}
                            \item $\|x_i - v_{\ell_1}\| \leq (1/2) \sqrt{\ell_1 - i \delta n}$,
                            \item $\|v_{\ell_1+1} - x_{i+1} \| \leq (1/2) \sqrt{i \delta n - \ell_1 - 1}$,
                            \item For each $i+1 \leq j \leq i+K-2$ we have $\|x_j-x_{j+1}\|\leq (1/2) \sqrt{\delta n}$,
                            \item $\|x_{i+K-1}-v_{\ell_2}\|\leq (1/2) \sqrt{\ell_2 - (i+K-1)\delta n}$,
                            \item $\|x_{i+K} - v_{\ell_2+1}\| \leq (1/2) \sqrt{(i+K)\delta n - \ell_2 - 1}$.
                            \item $\| v_{\ell_1} - v_{\ell_1+1} \|,\, \| v_{\ell_2} - v_{\ell_2+1} \| \leq \sqrt{2}$
\end{enumerate}
We will show that
\be \label{e:anothersrwestimate} \prob( \D_{(5)}', \D_{(6)} | \T_{n,m} ) \geq c \, ,\ee
and that
\be\label{e:srwestimate} \prob( \Phi(V_n) = (x,n) \,|\, \D_{(5)}', \D_{(6)}, \T_{n,m} ) \geq c \prob( \Phi(V_n) = (x,n) | \T_{n,m} ) \, ,\ee
which will conclude our proof. To prove \eqref{e:anothersrwestimate} we first note that the events of $\D_{(5)}'$ are all independent and each occurs with probability bounded below by a constant, by \eqref{e:CLT} and \eqref{e:simple-lb}. Conditioned on $x_i, x_{\ell_1}, x_{\ell_1+1}, x_{i+1}, \dots, x_{i+K}$ that satisfy $\D_{(5)}'$, the event $\D_{(6)}$ has probability at least $c=c(K)>0$, indeed, because of the factors $1/2$ in the definition of $\D_{(5)}'$, repeated application of the central limit theorem yields that the displacement requirements in $D_{(6)}$ can be satisfied.

To prove \eqref{e:srwestimate} we condition on the value of $z = x_{i+K} - x_i$. Choose $n_8$ large enough
so that the conditions $\| x \| \le \sqrt{2n / \delta}$ and $\delta n \ge n_8$ imply $\| x/n \| \le r_0$
(where $r_0$ is the constant chosen in Section \ref{ssec:rw-estimates}).
Let $\beta$ be such that $m_\beta = x/n$.

Observe that
\eqnsplst
{ \| z - K \delta n m_\beta \|
  &\le (K+4) \sqrt{\delta n} + K \delta n \| x \|/n
  \le 5 \sqrt{K} \sqrt{K \delta n} \, .}
We now also require $n_8 \ge n_2(\p^1, \eps = 1/2, L = 5 \sqrt{K})$, where
$n_2$ is the constant in Lemma \ref{lem:beta-LCLT}. Then Lemma \ref{lem:beta-LCLT}(ii)
implies
\eqnsplst
{ \prob \Big( \Phi(V_n) = (x,n) \,\Big|\, \D_{(5)}', \D_{(6)}, \T_{n,m} \Big)
  &= \prob \Big( S_\beta(n - K \delta n) = x - z \Big) \\
  &\ge \frac{c(K)}{D_\beta^d (n - K \delta n)^{d/2}}
  \ge \frac{c'(K)}{D_\beta^d n^{d/2}} \, .}
On the other hand,
\eqnst
{ \prob \Big( \Phi(V_n) = (x,n) \,\Big|\, \T_{n,m} \Big)
  \le \frac{2 C}{D_\beta^d n^{d/2}} \, .}
The last two inequalities imply \eqref{e:srwestimate}.
\end{proof}

\subsection{Abundant intersections}
\label{ssec:enough}
We proceed with proving the second part of Theorem \ref{goodthingshappen}. To ease the presentation
of this calculation
let $\T_1$ and $\T_2$ be independent random trees distributed
as $\T_{\delta n, 2 \delta n}$ and rooted at $\rho_1, \rho_2$, respectively.
Let $\Phi_1$ and $\Phi_2$ be independent random walk mappings of
$\T_1$ and $\T_2$, respectively, into $\Z^d \times \Z_+$
such that $\Phi_1(\rho_1) = (o,0)$ and $\Phi_2(\rho_2) = (x,0)$.
Then on the event $\A(i) \cap \{ y_{i+K} - x'_{i+K} = x \}$,
the random variable $|\I|$ introduced in \eqref{e:I-defn}
has the same distribution as the random variable (also
denoted $|\I|$ here):
$$ |\I|
   = \sum_{U_1 \in \T_1, U_2 \in \T_2}
     {\bf 1}_{\text{$(U_1,U_2)$ intersect-well}} \, .$$
Here we have tacitly adapted the definition of ``intersect-well''
to the present setting, by replacing $\XX'_{i+K}$ by $\rho_1$ and
$\YY_{i+K}$ by $\rho_2$. Our goal in this section is to show that when
$d = 5$ we have $|\I| \geq c \sigma^4 D^{-5} (\delta n)^{1/2}$
with positive probability.

\begin{theorem}
\label{secondmomentargument}
Assume $d = 5$ and that $\| x \| \leq \sqrt{\delta n}$.
There exist constants $C < \infty$, $c > 0$ and
$n_9 = n_9 (\sigma^2, C_3, \p^1) < \infty$ such that
for $\delta n \ge n_9$ we have
\eqnst
{ \E |\I|
  \geq \frac{c \sigma^4}{D^5} \sqrt{\delta n} \, ,}
and
\eqn{e:2nd-moment-5D}
{ \E |\I|^2
  \leq \frac{C \sigma^8}{D^{10}} \delta n \, .}
\end{theorem}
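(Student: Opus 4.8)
\textbf{Proof strategy for Theorem \ref{secondmomentargument}.}
The plan is a standard first- and second-moment computation for the
count $|\I|$ of intersecting pairs, carried out via the local limit
theorem of Lemma \ref{lem:beta-LCLT} and the survival estimates of
Section \ref{ssec:enough}. I will set up $|\I|$ as a double sum over
pairs $(U_1,U_2)$ with $U_1 \in \T_1$, $U_2 \in \T_2$ and work
conditionally on the heights involved. By definition of
``intersect-well'', a pair contributes iff: $h(U_1)=h(U_2)=:h$ lies in
$[(5/6)\delta n,\delta n]$ (measuring from the common root level, so
$h \asymp \delta n$); the two highest common ancestors $Z_1,Z_2$ sit at
heights in $[(1/2)\delta n,(2/3)\delta n]$; the four relevant tree
segments are typically-spaced; and $\Phi_1(U_1)=\Phi_2(U_2)$. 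The tree
structure and the spatial displacements are independent, so I will first
take expectations over the tree (using that each $Z_j$ has
$\Theta(\delta n)$ descendants at level $h$ that are themselves tops of
subtrees reaching the required height, contributing the $\sigma^2$
factors and the $\theta$'s via \eqref{e:Kolm2}), and then over the walk
mappings.

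For the first moment, fix the heights $h_{Z_1},h_{Z_2},h$ and condition
on the tree. The number of candidate pairs $(U_1,U_2)$ with prescribed
ancestor heights is, in expectation, of order
$\sigma^2 \cdot (\delta n)\,\theta(\cdot) \asymp \sigma^4$ times a
combinatorial factor per unit choice of the three heights, i.e.\ after
summing over the $O((\delta n)^3)$ choices of heights one gets a factor
$(\delta n)^3 \sigma^4$ up to constants — but each height sum really
contributes only an $O((\delta n))$-range with bounded density from the
$\theta$-factors, so one must be careful: the net tree expectation of
the number of admissible quadruples $(Z_1,U_1,Z_2,U_2)$ is
$\Theta(\sigma^4 (\delta n)^{?})$. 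The spatial constraint
$\Phi_1(U_1)=\Phi_2(U_2)$ contributes, by the local CLT
\eqref{e:beta-LCLT-ub} and \eqref{e:beta-LCLT}, a probability
$\asymp D^{-d}(\delta n)^{-d/2}$ for each pair, summed against the
$\TS$ events which force the relevant displacements to be of typical
size $\sqrt{\cdot}$ and hence are satisfied with positive probability
(via \eqref{e:CLT} and the Gaussian lower bound in
\eqref{e:beta-LCLT}). Counting: the branching factor at $Z_j$ gives
$\sigma^2$ per side, the $\theta$-factors give $(\delta n)^{-1}$ per
side (cancelling against a $(\delta n)$ from summing over which vertex
at level $h$), and one free sum over $h$ in a range of length
$\Theta(\delta n)$ survives, together with the LCLT normalisation
$D^{-d}(\delta n)^{-d/2}$. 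In $d=5$ this yields
$\E|\I| \asymp \sigma^4 D^{-5} (\delta n) \cdot (\delta n)^{-5/2}
\cdot (\delta n) = \sigma^4 D^{-5} (\delta n)^{1/2}$, matching the
claim; the constant $c>0$ comes from the Gaussian lower bound being
bounded below on the relevant range $\| \cdot \| \le L\sqrt{\delta n}$.

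For the second moment I write
$\E|\I|^2 = \sum_{(U_1,U_2)} \sum_{(U_1',U_2')} \prob(\text{both
intersect-well})$ and split according to how the four tree-paths
$U_1,U_1'$ in $\T_1$ and $U_2,U_2'$ in $\T_2$ overlap. The diagonal-ish
terms where the pairs share long initial segments are controlled by the
same kind of count as the first moment and give a contribution of order
$(\E|\I|)\cdot(\text{something bounded}) \lesssim \sigma^4 D^{-5}(\delta
n)^{1/2}\cdot(\delta n)^{?}$, and the dominant ``independent'' term,
where the two pairs branch apart early in both trees, factorises into
$(\E|\I|)^2 \asymp \sigma^8 D^{-10}(\delta n)$, which is exactly the
claimed bound \eqref{e:2nd-moment-5D}. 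The key point, and the main
obstacle, is the bookkeeping of the intermediate overlap cases: when
$U_1,U_1'$ share an ancestor above $Z_1$ but below $h$, one gets an
extra LCLT factor $D^{-d}(\delta n')^{-d/2}$ over a shorter time
$\delta n'$, but also a reduced combinatorial freedom; one must check
that in $d=5$ these terms are $O(\sigma^8 D^{-10}\delta n)$ and do
\emph{not} dominate — this is precisely where $d<6$ enters, since for
$d\ge 6$ the overlap terms would swamp $(\E|\I|)^2$ and the second
moment method would fail. I would organise this by parametrising the
branch heights and summing the resulting geometric-type series in the
height differences, using \eqref{e:beta-LCLT-ub} for the upper bounds
throughout and the third-moment assumption on $p$ (through $C_3$ and the
bounds on $\theta$, $\widetilde{\NN}_n$ in Lemma \ref{survival}) to
control the expected numbers of relevant vertices. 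Once
$\E|\I| \gtrsim \sigma^4 D^{-5}\sqrt{\delta n}$ and
$\E|\I|^2 \lesssim \sigma^8 D^{-10}\delta n$ are in hand, the
Paley–Zygmund inequality gives $|\I| \ge c\sigma^4 D^{-5}\sqrt{\delta
n}$ with probability bounded below, which is the second statement of
Theorem \ref{goodthingshappen}.
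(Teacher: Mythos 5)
Your overall strategy coincides with the paper's: a first-moment lower bound and second-moment upper bound for $|\I|$ via the uniform local CLT of Lemma \ref{lem:beta-LCLT} and branching-process estimates, with a Paley--Zygmund step afterwards. However, two essential pieces are either wrong or missing. The first-moment bookkeeping as you wrote it does not yield the stated bound. You keep only one free height sum (over the common height $h$ of $U_1,U_2$) and cancel a factor $\delta n$ per side against a survival factor $\theta\asymp(\delta n)^{-1}$; but the definition of intersect-well imposes no survival requirement on the subtree above $U_j$, so no $\theta$-factor is attached to $U_j$, and the ancestor heights $k_1=h(Z_1)$, $k_2=h(Z_2)$ are themselves free parameters ranging over intervals of length $\Theta(\delta n)$. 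The correct count has three free height sums ($h$, $k_1$, $k_2$), each $\Theta(\delta n)$, together with a factor $\Theta(\sigma^2)$ per tree for the expected number of level-$h$ descendants of the side tree attached at backbone height $k_j$ (the paper's Lemma \ref{lem:1st-mom-lb}, which needs $\E\LL_{h,k}\ge c\sigma^2$ from Lemmas \ref{survival} and \ref{lem:g_n'} because the side trees are conditioned to die out); this gives $\sigma^4 D^{-5}(\delta n)^{3-5/2}=\sigma^4D^{-5}(\delta n)^{1/2}$. Your accounting, taken literally, gives $\sigma^4 D^{-5}(\delta n)^{2-5/2}$, and the displayed identity $(\delta n)\cdot(\delta n)^{-5/2}\cdot(\delta n)=(\delta n)^{1/2}$ is arithmetically false. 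You also need a genuine lower bound on the per-pair spatial probability compatible with all four typically-spaced constraints; this is the paper's Lemma \ref{cltforfirstmoment}, which exhibits $\gtrsim D^{3d}(\delta n)^{3d/2}$ admissible triples $(z_1,u,z_2)$, each of probability $\gtrsim\bigl(D^{-d}(\delta n)^{-d/2}\bigr)^4$, and this is where $\|x\|\le\sqrt{\delta n}$ and the Gaussian lower bound in Lemma \ref{lem:beta-LCLT}(ii) enter.

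For the second moment you identify the right structure (a sum over quadruples, split according to the branching heights, with the ``independent'' term of order $\sigma^8D^{-10}\delta n$ and the overlap terms needing $d<6$), but you explicitly defer exactly the step that constitutes the proof: verifying that the overlap contributions are $O(\sigma^8D^{-10}\delta n)$ in $d=5$. The paper carries this out by first dropping the typically-spaced constraints, then bounding the tree factor with Lemma \ref{lem:treecount} (expected number of pairs in one tree with common ancestor at a prescribed height, bounded by $C_3+2\sigma^4\delta n$, after an FKG comparison with the unconditioned tree $\T_{\delta n,\infty}$ to remove the conditioning), and bounding the spatial diagram $\sum_{z_1,z_2}\p^{k_1}(o,z_1)\p^{2h_u-k_1-k_2}(z_1,z_2)\p^{2h_w-k_1-k_2}(z_1,z_2)\p^{k_2}(x,z_2)$ via the Green-function estimate of Lemma \ref{lem:f-bnd}, followed by the case split $\|z_1-z_2\|\le|k_1-k_2|^{1/2}$ versus the opposite; summing gives $Y_1\le CD^{-2d}\sigma^8(\delta n)^{6-d}$ plus lower-order terms $Y_2,Y_3$. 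Note also that even in your ``independent'' case the spatial parts do not literally factorize into $(\E|\I|)^2$, since both pairs share the walk segments up to the common ancestors; the diagrammatic bound is what replaces the factorization. Without these estimates (or equivalents), the bound \eqref{e:2nd-moment-5D} is asserted rather than proved.
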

%

Recall that for a tree vertex $V$ we write $h(V)$ for its distance from the root.
Also recall the vertices $Z_1$, $Z_1^+$, $Z_2$, $Z_2^+$ introduced
before Definition \ref{intersect}, and the constant $n_2$ of Lemma \ref{lem:beta-LCLT}(ii).

\begin{lemma}
\label{cltforfirstmoment}
Given instances of $\T_1$ and $\T_2$, let $U_1 \in \T_1$ and $U_2 \in \T_2$ be vertices both at height
$(5/6) \delta n \le h(U_1) = h(U_2) \leq \delta n$, and such that
$(1/2) \delta n \le h(Z_1), h(Z_2) \le (4/6) \delta n$.
There exists $c = c(d) > 0$ such that whenever
$\delta n \ge 6 n_2 (\p^1, \eps = 1/2, L = 1)$
and $\| x \| \le \sqrt{\delta n}$ we have
$$ \prob \big( \text{$(U_1, U_2)$ intersect-well} \, \big | \, \T_1, \T_2 \big )
   \geq \frac{c}{D^d (\delta n)^{d/2}} \, . $$
\end{lemma}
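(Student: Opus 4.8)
The plan is to condition on the trees $\T_1, \T_2$ and compute the probability of intersect-well by integrating over the spatial locations of the four marked vertices $\Phi(Z_1), \Phi(Z_1^+), \Phi(Z_2), \Phi(Z_2^+)$, and then over the common endpoint $\Phi(U_1) = \Phi(U_2)$. Write $h_1 = h(Z_1)$, $h_1' = h(Z_1^+) = h_1 + 1$ (the child is one level up), $h = h(U_1) = h(U_2)$, and similarly $h_2 = h(Z_2)$; all of these are prescribed by the statement and lie in intervals of width $\Theta(\delta n)$ separated by gaps $\Theta(\delta n)$. Since the random walk mappings along the tree edges are independent increments, conditionally on $\T_1, \T_2$ the locations $\Phi(Z_1), \Phi(Z_2)$ are sums of $h_1, h_2$ i.i.d.\ steps started from $(o,0)$ and $(x,0)$ respectively, then $\Phi(Z_i^+)$ is one further step, and finally $\Phi(U_i)$ is the sum of $h - h_i - 1$ further i.i.d.\ steps from $\Phi(Z_i^+)$. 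I would restrict attention to the favorable event $E$ on which $\Phi(Z_1), \Phi(Z_1^+) $ are typically-spaced from the root of $\T_1$, $\Phi(Z_2), \Phi(Z_2^+)$ are typically-spaced from $(x,0)$, and in addition $\Phi(Z_1^+)$ and $\Phi(Z_2^+)$ lie within $\sqrt{\delta n}$ of each other — using $\| x \| \le \sqrt{\delta n}$, the central limit theorem \eqref{e:CLT} and \eqref{e:simple-lb} give that this event has probability bounded below by a constant $c(d) > 0$.

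On the event $E$, I then need a lower bound on $\prob(\Phi(U_1) = \Phi(U_2),\ \TS(Z_1^+, U_1),\ \TS(Z_2^+, U_2) \mid \Phi(Z_1^+), \Phi(Z_2^+))$. Here $\Phi(U_1) - \Phi(Z_1^+)$ and $\Phi(U_2) - \Phi(Z_2^+)$ are independent sums of $m_1 := h - h_1 - 1$ and $m_2 := h - h_2 - 1$ i.i.d.\ steps, each $m_j = \Theta(\delta n)$. The displacement one needs is $\Phi(U_2) - \Phi(U_1) = 0$, i.e.\ the two sums must differ by exactly $\Phi(Z_1^+) - \Phi(Z_2^+)$, a vector of norm $\le \sqrt{\delta n} = O(\sqrt{m_j})$. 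Summing over the common target value $y$ ranging over $\{\, y : \| y - \Phi(Z_1^+) \| \le \sqrt{m_1}\,\}$ (which forces $\TS(Z_1^+, U_1)$ and, since $\|\Phi(Z_1^+) - \Phi(Z_2^+)\| \le \sqrt{\delta n}$ and $m_2 = \Theta(m_1)$, also $\TS(Z_2^+, U_2)$ after possibly shrinking the box by a constant), we get
\[
  \prob(\cdots)
  \ \ge\ \sum_{y} \p^{m_1}(\Phi(Z_1^+), y)\, \p^{m_2}(\Phi(Z_2^+), y).
\]
By Lemma \ref{lem:beta-LCLT}(ii) with $\beta = 0$ (the conditioning on $\T_1, \T_2$ does not tilt anything here, since the steps are unconditioned), each factor is $\ge c\, D^{-d} (\delta n)^{-d/2}$ for the $\Theta((D\sqrt{\delta n})^d)$ values of $y$ in the box, the count coming from \eqref{e:volume}. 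Multiplying, $\prob(\cdots) \ge c D^{-d}(\delta n)^{-d/2} \cdot D^{d} (\delta n)^{d/2} \cdot D^{-d}(\delta n)^{-d/2} = c D^{-d}(\delta n)^{-d/2}$, which is exactly the claimed bound.

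The main obstacle — really the only delicate point — is bookkeeping the nested typically-spaced constraints so that they are all implied by the single favorable event $E$ together with forcing the target $y$ into one box: one must check that the constants can be chosen uniformly so that $\TS(\XX'_{i+K}, Z_1)$ etc.\ (in the lemma's reduced notation, $Z_1$ typically-spaced from the root) hold on $E$, that restricting $y$ to $\|y - \Phi(Z_1^+)\| \le c\sqrt{\delta n}$ simultaneously yields $\TS(Z_1^+,U_1)$ and $\TS(Z_2^+,U_2)$, and that the heights $h_1, h_2, h$, being confined to the prescribed windows, make all the relevant gaps comparable to $\delta n$ so the local limit theorem applies (this needs $\delta n \ge 6 n_2$, matching the hypothesis). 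Everything else is a routine application of \eqref{e:CLT}, \eqref{e:volume}, and Lemma \ref{lem:beta-LCLT}.
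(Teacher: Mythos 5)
Your proposal is essentially the paper's argument: the paper also conditions on the trees, uses the independence of the walk increments along the four legs, and obtains the bound by counting $\Theta(D^{3d}(\delta n)^{3d/2})$ admissible spatial positions $(z_1,u,z_2)$ each carrying an LCLT weight $c\,(D^{-d}(\delta n)^{-d/2})^4$; your version merely folds the first two legs into a CLT event of constant probability and then sums over the common endpoint $u$, which is the same computation. One constant needs fixing: since $m_2=h(U_2)-h(Z_2)-1$ can be as small as about $\delta n/6$, requiring only $\|\Phi(Z_1^+)-\Phi(Z_2^+)\|\le\sqrt{\delta n}$ cannot give $\TS(Z_2^+,U_2)$ no matter how much you shrink the $y$-box (the triangle inequality already overshoots $\sqrt{m_2}$); you must instead demand in $E$ that $\|\Phi(Z_1^+)-\Phi(Z_2^+)\|$ be a small fraction of $\sqrt{\delta n/6}$, which still has probability bounded below by the CLT, so the fix is immediate.
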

\begin{proof}
Denote the spatial locations of $Z_1, Z_1^+, Z_2, Z_2^+$ by
$z_1, z_1^+, z_2, z_2^+$, and denote the common spatial location
of $U_1$ and $U_2$ be $u$. Let us choose the spatial locations
so that the inequalities
\begin{align*}
  \| z_1 \| &\le \sqrt{(1/2) \delta n} & \| z_2 - x \| &\le \sqrt{(1/2) \delta n}    \\
  \| z_1^+ - z_1 \| &\le \sqrt{2} & \| z_2^+ - z_2 \| &\le \sqrt{2} \\
  \| z_1^+ - u \| &\le \sqrt{(1/6) \delta n} & \| z_2^+ - u \| &\le \sqrt{(1/6) \delta n}
\end{align*}
are satisfied --- this guarantees that the required events $\TS( \cdot, \cdot )$
all occur. Fix the displacements $z_1^+ - z_1$ and $z_2^+ - z_2$.
Since $\sqrt{1/2} + \sqrt{1/6} > 1/2$, there are
$\ge c D^{3d} (\delta n)^{3d/2}$ choices
for $(z_1,u,z_2)$ satisfying the requirements above.
Due to Lemma \ref{lem:beta-LCLT}, each choice has probability at least
$c (D^{-d} (\delta n)^{-d/2})^4$ occurring. Combined with
\eqref{e:simple-lb} to handle the displacements $z_1^+ - z_1$
and $z_2^+ - z_2$, this proves the statement of the lemma.
\end{proof}

\begin{lemma}
\label{lem:1st-mom-lb}
We have
\eqn{e:1st-mom-lb}
{ \E |\I|
  \geq \frac{c \sigma^4}{D^d} (\delta n)^{(6-d)/2} \, .}
whenever $\delta n \ge \max \{ n_2(\p^1, \eps = 1/2, L = 1), n_5, 6 n_6 \}$.
\end{lemma}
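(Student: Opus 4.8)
The plan is to compute $\E|\I|$ by writing it as a double sum over pairs of vertices $(U_1, U_2)$ and applying a first-moment (linearity of expectation) argument. First I would decompose
$$
\E|\I| = \sum_{h} \sum_{\substack{U_1 \in \T_1,\, U_2 \in \T_2 \\ h(U_1) = h(U_2) = h}} \prob\big( (U_1, U_2) \text{ intersect-well} \big),
$$
where the height $h$ ranges over $(5/6)\delta n \le h \le \delta n$. For each fixed pair at a common height $h$, I would condition on $\T_1, \T_2$ and on the heights $h(Z_1), h(Z_2)$ of the relevant ancestors lying in the window $[(1/2)\delta n, (4/6)\delta n]$, then invoke Lemma~\ref{cltforfirstmoment} to lower-bound the conditional probability of intersecting-well by $c D^{-d}(\delta n)^{-d/2}$.

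The key remaining input is a lower bound on the \emph{expected number} of admissible pairs $(U_1, U_2)$, i.e.\ pairs satisfying the tree-structural constraints in the definition of intersect-well (the ancestors $Z_1, Z_2$ at the correct heights, the branching structure into $\XX'_{i+K+1}$-like and $\YY_{i+K+1}$-like descendants). Here I would use the known branching-process survival asymptotics recalled in the excerpt: $\theta(n) \asymp 1/(\sigma^2 n)$ from \eqref{e:Kolm}, \eqref{e:Kolm2}, together with the fact (via Lemma~\ref{lem:g_n'} and the size-biasing) that a vertex at generation $\asymp \delta n$ has $\Theta(\sigma^2 \cdot \delta n)$ descendants at a later generation $\asymp \delta n$ with positive probability in the surviving regime. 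Concretely, the number of vertices $U_1 \in \T_1$ at height $h$ that descend (with the correct intermediate split at height $h(Z_1)$) should be on the order of $\sigma^2 (\delta n)^2$ in expectation — roughly one factor of $\delta n$ from the choice of the branch point $Z_1$ in its window and one factor of $\sigma \cdot \sqrt{\delta n}$ squared... more carefully, the expected population size at generation $\asymp \delta n$ of a tree conditioned to reach that generation is $\Theta(\sigma^2 \delta n)$, and summing over the $\Theta(\delta n)$ admissible choices of $h(Z_1)$ gives $\Theta(\sigma^2 (\delta n)^2)$ expected choices for $U_1$; the same for $U_2$. Multiplying the two tree-counts ($\asymp \sigma^2(\delta n)^2$ each) by the per-pair probability $c D^{-d}(\delta n)^{-d/2}$ yields
$$
\E|\I| \ \gtrsim\ \sigma^2 (\delta n)^2 \cdot \sigma^2 (\delta n)^2 \cdot \frac{c}{D^d (\delta n)^{d/2}} \ =\ \frac{c\sigma^4}{D^d} (\delta n)^{4 - d/2} \cdot (\delta n)^{?},
$$
so I would track the exponents carefully: the target exponent is $(6-d)/2 = 4 - d/2 + (d-2)/2$, meaning the correct count of admissible $U_1$ (resp.\ $U_2$) must contribute a combined $(\delta n)^{(12-d)/4}$... this bookkeeping is exactly the point where I must be precise rather than hand-wavy, and it is the main obstacle.

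The subtle part, and where I expect the real work to be, is getting the \emph{constant} and the \emph{exact power} right: one must choose the decomposition so that the expected number of admissible ancestor-pairs $(Z_1, Z_2)$ (each with a surviving descendant reaching height $h$ and also a surviving descendant continuing to $\XX'_{i+K+1}$, resp.\ $\YY_{i+K+1}$) is computed via the Kolmogorov asymptotics, and then the expected number of $U_1$ below a given $Z_1^+$ (at the correct height difference, conditioned on reaching that height) is $\Theta(\sigma^2 \delta n)$. Writing $A$ for the expected number of admissible $U_1 \in \T_1$ and noting $A = \Theta(\sigma^2 (\delta n)^2)$ when $d = 5$ (one power of $\delta n$ from the branch-point window, via $\theta(\delta n) \asymp 1/(\sigma^2 \delta n)$ cancelling against the $\delta n$ summands, and the population factor $\sigma^2 \delta n$), and similarly for $\T_2$, I get $\E|\I| \ge c\,A^2\, D^{-d}(\delta n)^{-d/2}$, which at $d=5$ gives $\E|\I| \ge c\sigma^4 D^{-5}(\delta n)^{1/2}$ as claimed — matching $(6-d)/2 = 1/2$. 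I would then verify all the lower-bound probabilities (positive-density of admissible branch points, positive probability of the spatial typically-spaced constraints) hold uniformly once $\delta n$ exceeds $\max\{n_2(\p^1, 1/2, 1),\, n_5,\, 6n_6\}$, using \eqref{e:simple-lb}, \eqref{e:CLT}, Lemma~\ref{lem:g_n'}, and Lemma~\ref{survival}, which are exactly the hypotheses listed in the statement.
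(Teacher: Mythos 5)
Your overall strategy coincides with the paper's: write $\E|\I|$ as a sum over heights and over pairs $(U_1,U_2)$, use Lemma~\ref{cltforfirstmoment} for the per-pair probability $c D^{-d}(\delta n)^{-d/2}$, and multiply by the expected number of admissible pairs. The gap is in the combinatorial count, and you yourself flag it as ``the main obstacle'' without resolving it. Your claim that the expected number of admissible $U_1\in\T_1$ is $\Theta(\sigma^2(\delta n)^2)$ is wrong. You arrive at it by using the \emph{conditional} population size $\E[\NN_j \mid \NN_j>0]\asymp\sigma^2 j$, but no such conditioning is present: for a fixed target height $h$ and a fixed branch-point height $k_1$, the quantity to compute is the \emph{unconditional} expected number of off-spine vertices of $\T_1$ at height $h$ descending from the spine vertex at height $k_1$, namely
$$ \E \LL_{h,k_1} \;=\; \E\big[\, \widetilde{\NN}_{h-k_1} \,\big|\, \widetilde{\NN}_{2\delta n-k_1}=0 \big] \;=\; (1-\theta(2\delta n-k_1))^{-1}\, g_{h-k_1}'\big(1-\theta(2\delta n-h)\big) \;\ge\; c\sigma^2 \, ,$$
a \emph{constant}, by Lemma~\ref{survival} and Lemma~\ref{lem:g_n'} (recall $\E\widetilde{\NN}_j=\sigma^2$ for all $j$). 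Summing over the $\Theta(\delta n)$ choices of $k_1$ gives $\Theta(\sigma^2\delta n)$ admissible $U_1$ at each fixed $h$ --- one power of $\delta n$ less than you assert --- and likewise for $U_2$. The remaining sum over $h$ is performed \emph{once} for the pair (since $h(U_1)=h(U_2)=h$), supplying the third and final factor of $\delta n$, so the pair count is $\Theta(\sigma^4(\delta n)^3)$ and
$$ \E|\I| \;\ge\; \frac{c}{D^d(\delta n)^{d/2}}\sum_{h}\sum_{k_1,k_2}\E\LL_{h,k_1}\,\E\LL_{h,k_2} \;\ge\; \frac{c\sigma^4}{D^d}(\delta n)^{3-d/2} \;=\; \frac{c\sigma^4}{D^d}(\delta n)^{(6-d)/2} \, .$$

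Your own arithmetic confirms something is off: with $A=\sigma^2(\delta n)^2$ the bound $cA^2D^{-d}(\delta n)^{-d/2}$ equals $c\sigma^4 D^{-5}(\delta n)^{3/2}$ at $d=5$, not the $(\delta n)^{1/2}$ you then assert ``as claimed.'' A lower bound of order $(\delta n)^{3/2}$ cannot be correct, since by Cauchy--Schwarz it would force $\E|\I|^2\ge (\E|\I|)^2 \gtrsim (\delta n)^3$, contradicting the second moment bound \eqref{e:2nd-moment-5D}. So the decisive step of the lemma --- identifying $\E\LL_{h,k}$ with the conditioned expectation of $\widetilde{\NN}_{h-k}$ and bounding it below by $c\sigma^2$ via the generating-function estimates --- is missing from your argument.
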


\begin{proof}
By Lemma \ref{cltforfirstmoment} we have
\eqn{e:first-moment-sum}
{ \E |\I|
  \ge \frac{c}{D^d (\delta n)^{d/2}} \sum_{h=5\delta n/6}^{\delta n} \,\,\, \sum_{k_1,k_2=\delta n/2}^{4\delta n/6}  \E \LL_{h,k_1} \E \LL_{h,k_2} }
where $\LL(h, k)$ counts the number of $U_1\in \T_1$ at level $h$ such that $Z_1$ is at level $k$.
 %
Note that since $Z_1$ is a backbone vertex, we have that
$$ \E \LL_{h,k_1} = \E \left[ \widetilde{\NN}_{h - k_1} \,|\, \widetilde{\NN}_{2 \delta n - k_1} = 0 \right] \, .$$
We have that
\begin{eqnarray*}
\E \left[ \widetilde{\NN}_{h - k_1} \,|\, \widetilde{\NN}_{2 \delta n - k_1} = 0 \right] &=& (1-\theta(2\delta n - k_1))^{-1} g_{h-k_1}'(1-\theta(2\delta n -h)) \\
&\geq& c \sigma^2 \, ,
\end{eqnarray*}
by Lemma \ref{survival} and Lemma \ref{lem:g_n'}. Summing this estimate in \eqref{e:first-moment-sum} concludes the proof.
%
\end{proof}


%

The remainder of this section is devoted to the proof of the
second moment estimate in Theorem \ref{secondmomentargument}.
Given numbers $h_u,h_w,k_1$ satisfying
$$ \delta n / 2 \leq k_1 \leq h_u, h_w \leq \delta n \, ,$$
we write $\LL(h_u, h_w, k_1)$ for the variable counting the number of pairs of tree vertices $U, W$ such that their highest common ancestor in the tree is at level $k_1$.

\begin{lemma}
\label{lem:treecount}
We have
\eqnsplst
{ &\E_{\T_{\delta n, 2\delta n}} \LL(h_u, h_w, k_1) \\
  &\qquad \leq (C_3 + 2\sigma^4 \delta n) {\bf 1}_{\{h_u > k_1, h_w > k_1\}}
      + (1 + 2\sigma^2 \delta n ){\bf 1}_{\{h_u = k_1 \text{ or } h_w = k_1\}} \, .}
\end{lemma}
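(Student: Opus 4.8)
The plan is to compute $\E_{\T_{\delta n,2\delta n}}\LL(h_u,h_w,k_1)$ by decomposing according to the location of the highest common ancestor $A$ of the pair $(U,W)$, which sits at level $k_1$. There are two possibilities: either $A$ is the backbone vertex $V_{k_1}$, or $A$ lies in the critical tree attached to $V_j$ for some $0\le j<k_1$ (necessarily at generation $k_1-j\ge 1$ of that tree, since $k_1\ge \delta n/2\gg 1$). This decomposition is a genuine partition of the pairs counted by $\LL$: distinct children of $A$ route to $U$ and to $W$, and the highest common ancestor of anything below one child and anything below another is exactly $A$. Before starting I would dispose of the ``die out before generation $2\delta n-j$'' conditioning: each relevant level is $\le\delta n$, so the conditioning is always ``die before generation $\ge\delta n$'', which by \eqref{e:survival-uncond} (equivalently \eqref{e:Kolm2}) has probability $\ge 1/2$ once $\delta n\ge n_7(C_3)$; hence every conditional generation-size expectation below is at most twice the unconditional one (and one may note that conditioning to die out early does not increase generation sizes, which recovers the stated constants exactly). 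The only branching-process inputs needed are $\E\NN_g=1$ and $\E\widetilde{\NN}_g=\sigma^2$ for $g\ge1$, together with the offspring-pair moments $\E[\NN_1(\NN_1-1)]=\sigma^2$ for an interior vertex and, for a size-biased first generation $\widetilde R\sim\tilde p$, $\E[\widetilde R(\widetilde R-1)]=\E[(\NN_1-1)(\NN_1-2)\NN_1]\le C_3$ (and $\sigma^2\le C_3$ since $\NN_1$ is a nonnegative integer).

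For the case $h_u>k_1$, $h_w>k_1$ the ancestor $A$ has at least two children and the pair is determined by an ordered pair of distinct children $(c_1,c_2)$ routing to $U$ and to $W$. The key structural point is that the subtrees hanging from distinct children are independent even under the conditioning (``everyone in the tree off $V_l$ dies early'' factorises over the children), so each contribution factorises into (expected number of vertices $A$) times (expected number of ordered distinct child-pairs, i.e.\ an offspring-pair moment) times (product of the expected level-populations of the two child-subtrees). A level-$h$ descendant population of an interior vertex has mean $1$, whereas the level-$h$ descendant population of a backbone vertex $V_\ell$ equals $1+\sum_{\ell\le l<h}\widetilde{\NN}^{(l)}_{h-l}$ and so has mean $\le 1+\sigma^2\delta n$. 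Tallying: (i) $A$ off the backbone contributes $\sum_{0\le j<k_1}\E[\widetilde{\NN}_{k_1-j}]\cdot\E[\NN_1(\NN_1-1)]\le \sigma^4 k_1$; (ii) $A=V_{k_1}$ with one route through the backbone child $V_{k_1+1}$ and the other through an off-backbone child of $V_{k_1}$ contributes at most $\big(1+\sigma^2(h_u{-}k_1)\big)\sigma^2$, and symmetrically with the roles reversed; (iii) $A=V_{k_1}$ with both routes through off-backbone children of $V_{k_1}$ contributes at most $\E[\widetilde R(\widetilde R-1)]\le C_3$. Since $k_1\ge\delta n/2$ and $h_u,h_w\le\delta n$, the $\sigma^4$-coefficients sum to $k_1+(h_u-k_1)+(h_w-k_1)\le 3\delta n/2<2\delta n$, and the residual additive terms are $O(\sigma^2)+C_3=O(C_3)$; this gives the bound $C_3+2\sigma^4\delta n$.

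In the case $h_u=k_1$ (the case $h_w=k_1$ being symmetric, and $h_u=h_w=k_1$ meaning $W=U$), the vertex $U$ is itself the highest common ancestor, so $\LL(h_u,h_w,k_1)=\sum_{U:\,h(U)=k_1}\#\{W\succ U:\,h(W)=h_w\}$. The backbone vertex $V_{k_1}$ contributes $\E\big[1+\sum_{k_1\le l<h_w}\widetilde{\NN}^{(l)}_{h_w-l}\big]\le 1+\sigma^2(h_w-k_1)$, and the off-backbone vertices at level $k_1$ contribute $\sum_{0\le j<k_1}\E[\widetilde{\NN}_{k_1-j}]\cdot 1\le \sigma^2 k_1$; since $(h_w-k_1)+k_1=h_w\le\delta n$, hmm rather $(h_w-k_1)\le\delta n/2$ and $k_1\le\delta n$, the total is at most $1+(3/2)\sigma^2\delta n\le 1+2\sigma^2\delta n$ as claimed.

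The main obstacle I anticipate is bookkeeping rather than depth: one must handle the ``die out early'' conditioning without loss, correctly expand a backbone vertex's descendant population at a given level as a backbone term plus a sum of independent off-backbone subtree populations, and carefully track which subtrees are genuinely independent (distinct children of $A$; trees attached to distinct backbone vertices) so that the \emph{only} second-moment-type quantities entering are the offspring-pair moments $\sigma^2$ and $\le C_3$ — this is precisely what prevents a spurious extra factor of $\delta n$ coming from the $\E[\NN_g^2]\asymp g\sigma^2$ growth of ordinary generation sizes.
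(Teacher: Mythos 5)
Your proof is correct and takes essentially the same route as the paper: decompose according to whether the highest common ancestor at level $k_1$ lies on or off the backbone, and bound each contribution by first moments ($\E \NN_g = 1$, $\E \widetilde{\NN}_g = \sigma^2$) multiplied by the offspring pair moments $\sigma^2$ (interior vertex) and $\le C_3$ (size-biased backbone vertex), exactly as in the paper. The only difference is the removal of the ``die out before level $2\delta n$'' conditioning: the paper cites the FKG inequality (decreasing conditioning event, increasing count $\LL$), while you assert the same monotonicity directly (your fallback doubling via \eqref{e:survival-uncond} would inflate the constants but is harmless for how the lemma is used), and your residual additive $2\sigma^2$ is absorbed into the $\sigma^4\delta n$ slack for large $\delta n$, which is the same level of bookkeeping as the paper's own proof.
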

\begin{proof} Let $\T_{\delta n, \infty}$ be a random tree obtained similarly to $\T_{\delta n, 2\delta n}$ dropping the requirement that the critical trees hanging on the backbone are conditioned not to reach level $2\delta n$. By the FKG inequality
\cite{Harris,FKG} we have
$$ \E_{\T_{\delta n, 2\delta n}} \LL(h_u, h_w, k_1) \leq \E_{\T_{\delta n, \infty}} \LL(h_u, h_w, k_1) \, ,$$
indeed, the measure $\T_{\delta n, 2\delta n}$ is obtained from $\T_{\delta n, \infty}$ by conditioning on a monotone decreasing event in a product measure (all the independent progeny random variables) and the random variable $\LL$ is monotone increasing. From here we will always calculate with respect to $\T_{\delta n, \infty}$ and we drop the corresponding subscript.

For two vertices $U,W$ at heights $h_u, h_w$ we write $S$ for their highest common ancestor at height $k_1$. There is a slight difference in the calculation depending on whether $S$ is in the backbone of $\T_{\delta n, \infty}$ or not. Write $\LL^1(h_u,h_w,k_1)$ for the number of such $U,W$ such that $S$ is not on the backbone and $\LL^2(h_u,h_w,k_1)$ when $S$ is on the backbone. We first estimate $\E \LL^1$. When $h_u > k_1$ and $h_w > k_1$, the expected number of pairs $U,W$ emanating from a fixed $S$ at height $k_1$ is at most
$$ \sum_{k = 2}^\infty \p(k) \, k \, (k - 1) \E \NN_{h_u - k_1 - 1} \, \E \NN_{h_w - \ell_1 - 1} = \sigma^2 \, .$$
When either $h_u=k_1$ or $h_w=k_1$ (that is, either $U$ or $W$ equal $S$) the expected number of such pairs is at most $1$. By summing over the backbone vertex from which $S$ emanates we have that
$$ \E \LL^1(h_u,h_w,k_1) \leq \sigma^4 \delta n  {\bf 1}_{\{h_u > k_1, h_w > k_1\}}  +  \sigma^2 \delta n  {\bf 1}_{\{h_u = k_1 \text{ or } h_w = k_1\}} \, .$$
To estimate $\E \LL^2$ we assume now that $S$ is the unique vertex on the backbone at height $k_1$, and when $h_u>k_1$ and $h_w>k_1$ the expected number of $U,W$ in $\T_{\delta n, \infty}(k_1)$ is
$$ \sum_{k = 2}^\infty \tilde{\p}(k) \, k \, (k - 1) \E \NN_{h_u - k_1 - 1} \, \E \NN_{h_w - \ell_1 - 1} \leq C_3 \, .$$
The expected number of $U,W$ such that $U \in \T_{\delta n, \infty}(k_1)$ but $W$ emanates from some other backbone vertex at height $> k_1$ is at most $\sigma^4 \delta n$. Similarly, the expected number of $U,W$ in which $h_u=k_1$ (and so $U=S$) is at most $\sigma^2\delta n$. Putting these together gives
\eqnsplst
{ &\E \LL^1(h_u,h_w,k_1) \\
  &\qquad \leq (C_3 + \sigma^4 \delta n) {\bf 1}_{\{h_u > k_1, h_w > k_1\}} +  (1+\sigma^2 \delta n ){\bf 1}_{\{h_u = k_1 \text{ or } h_w = k_1\}} \, .}
\end{proof}

Recall the constant $n_1(\p^1)$ of Lemma \ref{lem:beta-LCLT}
and the constant $L_1(\p^1)$ of \eqref{e:Green}.

\begin{lemma}
\label{lem:f-bnd}
Suppose $d \ge 3$. There are constants $C = C(d) > 0$
and $C_2 = C_2(\p^1)$ such that
\eqnst
{ \sum_{h : k_1 \vee k_2 \le h \le \delta n} \p^{2h - k_1 - k_2}(z_1,z_2)
  \le \frac{C}{D^d} f(k_1,k_2,z_1,z_2) \, ,}
where
\eqnst
{ f(k_1,k_2,z_1,z_2)
  := \begin{cases}
     |k_1 - k_2|^{(2-d)/2}
        & \parbox{4.5cm}{if $\| z_1 - z_2 \| \le |k_1 - k_2|^{1/2}$
        and $|k_1 - k_2| \ge n_1$;} \\
     & \\
     C_2
        & \text{if $\| z_1 - z_2 \| \le |k_1 - k_2|^{1/2} < n_1$;} \\
     & \\
    \| z_1 - z_2 \|^{2-d}
        & \parbox{4.5cm}{if $\| z_1 - z_2 \| > |k_1 - k_2|^{1/2}$
        and $\| z_1 - z_2 \| \ge L_1$;} \\
     & \\
     C_2
        & \text{if $|k_1 - k_2|^{1/2} < \| z_1 - z_2 \| < L_1$.}
     \end{cases} }
\end{lemma}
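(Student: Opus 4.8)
The plan is to bound the sum on the left by a sub-sum of the Green's function and then split into a ``diffusive'' regime and a ``large-displacement'' regime, according to whether $\|z_1-z_2\|$ is at most, or larger than, $|k_1-k_2|^{1/2}$. First I would write $j := k_1 \vee k_2$ and $r := |k_1-k_2|$, and observe that as $h$ ranges over $j \le h \le \delta n$ the exponent $2h-k_1-k_2 = r + 2(h-j)$ takes pairwise distinct non-negative values. Hence
\[
  \sum_{h \,:\, k_1 \vee k_2 \le h \le \delta n} \p^{2h-k_1-k_2}(z_1,z_2)
  \ \le\ \Sigma := \sum_{m \ge r} \p^m(z_1,z_2)
  \ \le\ G(z_1-z_2)\,,
\]
and it remains to bound $\Sigma$ in the four cases.

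In the diffusive regime $\|z_1-z_2\| \le r^{1/2}$: if moreover $r \ge n_1$, then every term of $\Sigma$ has $m \ge n_1$, so the local limit upper bound of Lemma \ref{lem:beta-LCLT}(i) (applied with $\beta = 0$) gives $\p^m(z_1,z_2) \le 2C D^{-d} m^{-d/2}$, and summing the convergent tail (this uses $d \ge 3$) yields $\Sigma \le C(d) D^{-d} r^{(2-d)/2}$, matching the first branch of $f$. If instead $r < n_1$, I split $\Sigma$ into the at most $n_1$ terms with $m < n_1$, each trivially $\le 1$, and the tail $m \ge n_1$, which is $\le C(d) D^{-d} n_1^{(2-d)/2} \le C(d) D^{-d}$ as before; thus $\Sigma$ is bounded by a constant depending only on $d$ and $\p^1$, which matches the second branch of $f$ once $C_2 = C_2(\p^1)$ is taken large enough. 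In the large-displacement regime $\|z_1-z_2\| > r^{1/2}$: if $\|z_1-z_2\| \ge L_1$, then $\Sigma \le G(z_1-z_2) \le C(d) D^{-d} \|z_1-z_2\|^{2-d}$ by \eqref{e:Green}, matching the third branch; and if $r^{1/2} < \|z_1-z_2\| < L_1$, I would use that the walk is transient for $d \ge 3$, so that $G(x) = \prob_o(\tau_x < \infty)\, G(o) \le G(o) < \infty$ for every $x$, whence $\Sigma \le G(o)$ is again a constant depending only on $\p^1$, matching the fourth branch.

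The one genuinely delicate point is the bookkeeping of constants: the leading constant must stay of the form $C = C(d)$, whereas $C_2$ is allowed to depend on $\p^1$, and so $C_2$ has to absorb the factor $D^d$ arising in the two cases where $f$ is constant (together with the finitely many small-$m$ terms and the bound $G(o)$). Accordingly I would fix $C = C(d)$ first — it is dictated only by the local limit theorem constant of Lemma \ref{lem:beta-LCLT} and by the Green's function estimate \eqref{e:Green} — and only afterwards choose $C_2 = C_2(\p^1)$ large enough, e.g.\ $C_2 \ge D^d\big(n_1 + G(o)\big)/C(d)$, so that all four cases fall under the single bound $\tfrac{C}{D^d} f(k_1,k_2,z_1,z_2)$.
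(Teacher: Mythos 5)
Your proof is correct and follows essentially the same route as the paper: bound the sum by the tail $\sum_{m\ge |k_1-k_2|}\p^m(z_1,z_2)$ (resp.\ the full Green function), use the uniform local limit bound of Lemma \ref{lem:beta-LCLT}(i) with $\beta=0$ in the diffusive case and the Green function estimate \eqref{e:Green} in the large-displacement case, with the remaining two branches handled by elementary bounds absorbed into $C_2(\p^1)$. The only difference is cosmetic: where the paper dismisses the small-$|k_1-k_2|$ and small-$\|z_1-z_2\|$ cases as ``trivial,'' you spell them out (finitely many terms plus transience, $G(x)\le G(o)<\infty$ for $d\ge 3$), together with the bookkeeping of which constants may depend on $\p^1$.
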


\begin{proof}
Suppose first we are in the case $\| z_1 - z_2 \| \le |k_1 - k_2|^{1/2}$.
Then for all $h \ge k_1 \vee k_2$ we have
$2h - k_1 - k_2 \ge |k_1 - k_2| \ge \|z_1 - z_2\|^2$. Hence
due to Lemma \ref{lem:beta-LCLT}, in the case when $|k_1 - k_2|$ is
large enough, we have
\eqnsplst
{ \sum_{h : k_1 \vee k_2 \le h \le \delta n} \p^{2h - k_1 - k_2}(z_1,z_2)
  &\le \sum_{k = |k_1 - k_2|}^\infty \p^k(z_1,z_2) \\
  &\le \frac{C}{D^d} \sum_{k = |k_1 - k_2|}^\infty k^{-d/2} \\
  &\le \frac{C}{D^d} |k_1 - k_2|^{(2-d)/2} .}
When $|k_1 - k_2|$ is not large, the bound follows
trivially.

Suppose now we are in the other case $\| z_1 - z_2 \| > |k_1 - k_2|^{1/2}$.
Then due to \eqref{e:Green}, in the case when $\| z_1 - z_2 \|$ is large enough,
we have
\eqnst
{ \sum_{h : k_1 \vee k_2 \le h \le \delta n} \p^{2h - k_1 - k_2}(z_1,z_2)
  \le \sum_{k = 0}^\infty \p^k(z_1,z_2)
  \le \frac{C}{D^d} {\| z_1 - z_2 \|^{2-d}} \, .}
The bound is trivial in the case when $\| z_1 - z_2 \|$ is not large.
\end{proof}

\noindent {\bf Proof of Theorem \ref{secondmomentargument}.}
The lower bound on the first moment is Lemma \ref{lem:1st-mom-lb}
(we require that $n_9 \ge \max \{ 6 n_2(\p^1, \eps = 1/2, L = 1),\, n_5, 6 n_6 \}$).
We are left to prove the upper bound on the second moment.
First we drop the requirements of ``typically spaced'' from the definition of $\I$. This gives that
\be
\label{e:2nd-mom-expr}
\begin{split}
  \E |\I^2|
  &\leq \sum_{k_1, k_2 =\delta n/2}^{\delta n} \sum_{h_u, h_w = k_1 \vee k_2}^{\delta n}
        \E \LL(h_u,h_w,k_1) \E \LL(h_u,h_w,k_2) \\
  &\qquad\qquad\qquad\qquad\qquad \times p(h_u,h_w,k_1,k_2) \, ,
\end{split}
\ee
where $p(h_u,h_w,k_1,k_2)$ is the probability that $\Phi_1(U_1)=\Phi_2(U_2)$ and $\Phi_1(W_1) = \Phi_2(W_2)$ where $U_1, U_2, W_1, W_2$ are any tree vertices satisfying that the highest common ancestor of $U_1$ and $W_1$ is at height $k_1$ and the highest common ancestor of $U_2$ and $W_2$ is at height $k_2$ and $h(U_1) = h(U_2) = h_u$ and $h(W_1)=h(W_2)= h_w$. Note that this probability only depends on the corresponding heights and not on the vertices.
We have that
\eqnspl{e:diagram}
{ p(h_u,h_w,k_1,k_2) = \sum_{z_1, z_2 \in \Z^d} \sum_{u, w \in \Z^d}
    &\p^{k_1}(o,z_1) \p^{h_u - k_1}(z_1,u) \p^{h_w - k_1}(z_1,w) \\
    &\times \p^{k_2}(x,z_2) \p^{h_u - k_2}(z_2,u) \p^{h_w - k_2}(z_2,w) \, .}
We can perform the summations over $u, w$ yielding the expression
\eqnspl{e:diagram2}
{ \sum_{z_1, z_2 \in \Z^d}
    &\p^{k_1}(o,z_1) \p^{2 h_u - k_1 - k_2}(z_1,z_2)
    \p^{2 h_w - k_1 - k_2}(z_1,z_2) \p^{k_2}(x,z_2)  \, .}

Using Lemmas \ref{lem:treecount} and \ref{lem:f-bnd} we sum \eqref{e:2nd-mom-expr} over $h_u, h_w > k_1 \vee k_2$ and 
we get a bound of
$$ Y_1 = {C (\sigma^4 \delta n)^2 \over D^{2d}} \sum_{k_1, k_2 =\delta n/2} ^{\delta n} \sum_{z_1,z_2} f(k_1,k_2,z_1,z_2)^2 \p^{k_1}(o,z_1) \p^{k_2}(x,z_2) , .$$
Similarly, we sum over $h_u>k_1 \vee k_2$ and $h_w = k_1 \vee k_2$ and when the roles of $h_u$ and $h_w$ exchanged, getting a bound of
\be Y_2 = {C(\sigma^3 \delta n)^2 \over D^d} \sum_{k_1, k_2 =\delta n/2} ^{\delta n} \sum_{z_1,z_2} f(k_1,k_2,z_1,z_2)  \p^{|k_1-k_2|} (z_1,z_2) \p^{k_1}(o,z_1) \p^{k_2}(x,z_2) \, .\nonumber \ee
And finally our third bound is when $h_u=h_w=k_1 \vee k_2$ giving
$$ Y_3 = C (\sigma^2 \delta n )^2 \sum_{k_1, k_2 =\delta n/2} ^{\delta n} \sum_{z_1,z_2}  \p^{k_1}(o,z_1) \p^{|k_1 - k_2|}(z_1,z_2)
    \p^{|k_1 - k_2|}(z_1,z_2) \p^{k_2}(x,z_2) \, ,$$
so that $\E |\I|^2 \leq Y_1 + Y_2 + Y_3$. We start with bounding $Y_1$.
We split the summation over $z_1, z_2$
into two parts:
\begin{itemize}
\item[(I)] $\| z_2 - z_1 \| \le |k_1 - k_2|^{1/2}$;
\item[(II)] $\| z_2 - z_1 \| > |k_1 - k_2|^{1/2}$.
\end{itemize}
For the bounds we are going to require $n_9 \ge 2 n_1$.
We first bound case (I), and initially restrict to
$|k_1 - k_2| \ge n_1$ where $n_1$ is from Lemma \ref{lem:f-bnd}.
Using Lemma \ref{lem:beta-LCLT} and
$k_1 \ge \delta n /2 \ge n_9/2 \ge n_1$ in the first step,
the sum over $z_1, z_2$ in $Y_1$ is at most
\eqnspl{e:short-Case(I)}
{ &\sum_{z_2 \in \Z^d} \ \sum_{z_1 : \| z_2 - z_1 \| \le |k_1 - k_2|^{1/2}}
     \p^{k_1}(0,z_1) \p^{k_2}(x,z_2)
     |k_1 - k_2|^{2-d} \\
  &\qquad \le \frac{C}{D^d (\delta n)^{d/2}}
     \sum_{z_2 \in \Z^d} \ \sum_{z_1 : \| z_2 - z_1 \| \le |k_1 - k_2|^{1/2}}
     \p^{k_2}(x,z_2)
     |k_1 - k_2|^{2-d} \\
  &\qquad \le \frac{C}{(\delta n)^{d/2}}
     \sum_{z_2 \in \Z^d}
     \p^{k_2}(x,z_2)
     |k_1 - k_2|^{2-\frac{d}{2}} = \frac{C |k_1 - k_2|^{2-\frac{d}{2}}}{(\delta n)^{d/2}} \, .}
Now we sum this over $k_1, k_2$ and get a bound of $C (\delta n)^{4-d}$. Similarly, when summing over $k_1, k_2$ satisfying $|k_1-k_2|\leq n_1$ we get a bound of $C(\delta n)^{1-d/2}$ which is negligible since $d<6$. Putting all these together gives a contribution to $Y_1$ from case $1$ that is of order $D^{-2d} \sigma^8 (\delta n)^{6-d}$.

In case (II) we initially restrict to $\| z_1 - z_2 \| \ge L_1$.
We have
\eqnsplst
{ &\sum_{z_2 \in \Z^d} \ \sum_{z_1 : \| z_1 - z_2 \| > |k_1 - k_2|^{1/2}}
     \p^{k_1}(0,z_1) \p^{k_2}(x,z_2)
     \| z_1 - z_2 \|^{4-2d} \\
  &\qquad \le \frac{C}{D^d (\delta n)^{d/2}}
    \sum_{z_2 \in \Z^d} \ \sum_{z_1 : \| z_1 - z_2 \| > |k_1 - k_2|^{1/2}}
     \p^{k_2}(x,z_2)
     \| z_1 - z_2 \|^{4-2d} \\
  &\qquad \le
    \frac{C}{(\delta n)^{d/2}}
    \sum_{z_2 \in \Z^d} \p^{k_2}(x,z_2)
    |k_1 - k_2|^{(4-d)/2} = \frac{C |k_1 - k_2|^{2-\frac{d}{2}}}{(\delta n)^{d/2}} \, .}
The case $\| z_1 - z_2 \| \leq L_1$ is dealt with similarly, and all together we get that
$n_9$ can be chosen in such a way that
$$ Y_1 \leq CD^{-2d} \sigma^8 (\delta n)^{6-d} \, .$$
Very similar calculations yield that
$$ Y_2 \leq C D^{-2d} \sigma^6 (\delta n)^{3-d/2} \, , \qquad Y_3 \leq C D^{-2d} \sigma^4 (\delta n)^{3-d/2} \, ,$$
concluding the proof. \qed\\

\noindent {\bf Proof of Theorem \ref{goodthingshappen}.}
The first part of the theorem is just a combination of Lemmas \ref{lem:l1-exists}, \ref{lem:Yi's exist}, \ref{lem:l2-exists}, \ref{lem:X_{i+K} etc} and \ref{lem:show-K-spatial-good}, where we take
\eqnst
{ n_3
  = \max \{ n_5, n_5', 6 n_6 (3, \sigma^2, C_3), n_7, n_8(\p^1,K), n_9(\sigma^2, C_3, \p^1) \}. }
For the second part of the theorem we now choose $c_0 = c/2$, where $c$
is the constant in the lower bound on the first moment in
Theorem \ref{secondmomentargument}. Then the second statement of
Theorem \ref{goodthingshappen} follows immediately from
Theorem \ref{secondmomentargument} together with the inequality
$$ \prob \Big( V \geq \frac{1}{2} \E V \Big)
   \geq { (\E V)^2 \over 4 \E V^2 } \, ,$$
valid for any non-negative random variable $V$.
\qed

\section{Analysis of tree bad blocks}
\label{sec-badtree}

In this section we bound the resistance between $\Phi(X_i)$ and $\Phi(X_{i+K})$ conditioned on one of the good events in Definition \ref{ktreegood} not occurring. We will give a bound in terms of the following quantity, which later we will bound inductively. For any $k\leq n$ define
\be\label{l:gammabar} \bar{\gamma}(k; (x,n))
   = \sum_{y \in \Z^d} {\p^k(o,y) \p^{n-k}(y,x) \over \p^n(o,x)} \gamma(k,y) \, .\ee

For $a = 1, \ldots, 6$ we define $\cH_{\mathrm{(a)}}$ to be the event that conditions
(1) to ($a-1$) in Definitions \ref{ktreegood} and \ref{kspatialgood} are satisfied,
but condition ($a$) is not. Then we may write the disjoint union
$$ \A(i)^c \cup \B(i,c_0)^c
   = \bigcup_{a=1}^6 \cH_{\mathrm{(a)}} \cup \left( \A(i) \cap B(i,c_0)^c \right) \, .$$
Recall the constants $n_5$, $n_7$ introduced in \eqref{e:Kolm2} Lemma \ref{survival}.


\begin{lemma}
\label{badtree1}
There exist $C_4 > 0$ and $\delta_2 > 0$ such that
\eqnsplst
{ &\E \Big [ \Reff(\Phi(X_i) \lra \Phi(X_{i+K})) \, \mid \, \cH_{\mathrm{(1)}}, \Phi(V_n) = (x,n) \Big ] \\
  &\qquad \leq (1 + C_4 \delta) \bar{\gamma}(\delta n; (x,n))
     + (K-1) \bar{\gamma}(\delta n; (x,n)) }
whenever $0 < \delta < \delta_2$, $\delta n \ge \max \{ n_5(C_3), n_7(C_3) \}$.
\end{lemma}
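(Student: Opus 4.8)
The plan is to analyze the structure of the trace conditioned on $\cH_{\mathrm{(1)}}$, the event that condition (1) of Definition \ref{ktreegood} fails. Condition (1) asks that there be a \emph{unique} backbone index $\ell_1 \in [i\delta n, (i+1)\delta n)$ with $\T_{n,m}(\ell_1)$ reaching height $(i+2)\delta n$, and that this $\ell_1$ lie in the middle half $[(i+1/4)\delta n, (i+3/4)\delta n]$. Since $\cH_{\mathrm{(1)}}$ is the \emph{first} failure, there are no earlier conditions to intersect with, so $\cH_{\mathrm{(1)}}$ is purely a branching event and is independent of the random walk mapping $\Phi$. The key observation is that, regardless of how condition (1) fails, the backbone path $V_{i\delta n}, \ldots, V_{(i+K)\delta n}$ of length $K\delta n$ is always present in $\T_{n,m}$, and its image under $\Phi$ already gives a path in the trace from $\Phi(X_i)$ to $\Phi(X_{i+K})$ passing through $\Phi(X_{i+1}), \ldots, \Phi(X_{i+K-1})$. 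By the triangle inequality \eqref{triangle},
$$ \Reff(\Phi(X_i)\lra\Phi(X_{i+K})) \le \sum_{j=i}^{i+K-1} \Reff(\Phi(X_j)\lra\Phi(X_{j+1})) \, .$$

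First I would handle the $K-1$ blocks from $X_{i+1}$ to $X_{i+K}$. For each such block, the relevant subtree hanging off the backbone segment $[(j)\delta n,(j+1)\delta n]$ is distributed (under $\T_{n,m}$, and after conditioning on $\cH_{\mathrm{(1)}}$, which concerns only the block $[i\delta n,(i+1)\delta n]$) as a copy of $\T_{\delta n, m'}$ for an appropriate $m' \ge 2\delta n$, embedded by an independent random walk mapping started at $\Phi(X_j)$. By definition of $\gamma$ (as a supremum over $m' \ge 2\delta n$) together with the $\sup$ in its definition, the conditional expectation of $\Reff(\Phi(X_j)\lra\Phi(X_{j+1}))$, given the spatial displacement $x_{j+1}-x_j$, is at most $\gamma(\delta n, x_{j+1}-x_j)$; averaging over the displacement and using that, conditioned on $\Phi(V_n)=(x,n)$, the relevant walk increments are governed by the bridge law, yields a bound of $\bar\gamma(\delta n;(x,n))$ for each of these $K-1$ blocks — exactly the second term in the statement. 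Here I would use that the conditioning on $\{\Phi(V_n)=(x,n)\}$ together with the Markov property of the backbone walk makes $x_{j+1}-x_j$ distributed with the appropriate bridge-type weights $\p^{\delta n}\cdot\p^{n-(j+1)\delta n}/\p^n$, and that $\bar\gamma$ telescopes correctly when one sums over the intermediate vertex (this is the elementary identity $\sum_y \p^{k}(o,y)\p^{n-k}(y,x)/\p^n(o,x)=1$ applied to shifted origins, so the spatial marginal at $X_j$ does not inflate the bound).

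The remaining, and main, work is the first block $[i\delta n,(i+1)\delta n]$, which must produce the factor $(1+C_4\delta)\bar\gamma(\delta n;(x,n))$ rather than just $\bar\gamma(\delta n;(x,n))$. On $\cH_{\mathrm{(1)}}$ the subtree structure in this first block is conditioned — it is \emph{not} simply a copy of $\T_{\delta n,\cdot}$, because we know condition (1) fails. The point is to show this conditioning inflates the expected resistance by only a $(1+C_4\delta)$ factor. One shows that $\prob(\cH_{\mathrm{(1)}}) = \prob(\text{condition (1) fails})$ is bounded below by a constant (using Lemma \ref{lem:l1-exists}, the complementary probability is at most $1-c$), so conditioning on $\cH_{\mathrm{(1)}}$ multiplies any positive event's probability by at most $1/\prob(\cH_{\mathrm{(1)}}) \le 1/(1-c)$. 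That crude bound is not good enough since it gives a multiplicative constant bigger than $1$, not $1+C_4\delta$; instead I would decompose the block's contribution according to the height reached by the trees hanging on the backbone in the first block. The dominant scenario, contributing mass $1-O(\delta)$, is that \emph{no} tree in the first block reaches height $(i+2)\delta n$ — in which case the constraint imposed by $\cH_{\mathrm{(1)}}$ barely distorts the law of that block's embedded subtree relative to an unconditioned $\T_{\delta n,\cdot}$, contributing $\bar\gamma(\delta n;(x,n))$; the complementary scenarios (some tree reaches $(i+2)\delta n$, but either not uniquely or not in the middle half) have probability $O(\delta)$ by Kolmogorov's estimate \eqref{e:Kolm2}, and on them we bound the block's resistance crudely by its total edge count or by a deterministic multiple of $\delta n$, which after multiplication by the $O(\delta)$ probability contributes $O(\delta)\cdot\bar\gamma(\delta n;(x,n))$ (using $\bar\gamma(\delta n;(x,n)) \ge c\delta n$ from the trivial lower bound on resistance along the backbone path). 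Collecting these gives the $C_4\delta$ correction.

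\textbf{Main obstacle.} The delicate point is the first block: quantifying precisely that conditioning on $\cH_{\mathrm{(1)}}$ changes the law of the embedded first-block subtree in a way that costs only $(1+C_4\delta)$ in expected resistance, rather than a fixed constant $>1$. This requires carefully isolating the $O(\delta)$-probability "bad" sub-events inside $\cH_{\mathrm{(1)}}$ and giving a usable (polynomial in $\delta n$) a priori bound on the block resistance there — presumably via the commute-time identity \eqref{commutetime} or by bounding $\Reff$ by graph distance $\delta n$ — so that $O(\delta)\times(\text{polynomial})$ is absorbed into $C_4\delta\,\bar\gamma(\delta n;(x,n))$. One must also be careful that the random walk conditioning $\{\Phi(V_n)=(x,n)\}$ interacts benignly with all of this; since $\cH_{\mathrm{(1)}}$ is a tree event independent of $\Phi$, and the backbone walk is a Markov chain, the bridge decomposition handles this cleanly, but the bookkeeping must be done explicitly to get the stated inequality with the correct $\bar\gamma(\delta n;(x,n))$ (and not $\bar\gamma$ at a shifted spatial argument).
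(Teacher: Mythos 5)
Your overall frame (triangle inequality, the $K-1$ blocks being unaffected by the conditioning and each contributing $\bar\gamma(\delta n;(x,n))$ after the bridge/Chapman--Kolmogorov telescoping) matches the paper. But your treatment of the first block, which is the entire content of the lemma, rests on two claims that are false. First, the sub-events of $\cH_{\mathrm{(1)}}$ in which some side tree of the block does reach height $(i+2)\delta n$ (but uniqueness fails, or the unique index falls outside the middle half) do \emph{not} have probability $O(\delta)$: each of the $\delta n$ side trees reaches that height with probability of order $1/(\delta n)$ (by \eqref{e:Kolm2} and Lemma \ref{survival}), so the number of such trees is essentially Poisson with constant mean, and all of these scenarios have probability bounded away from $0$, uniformly in $\delta$. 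Second, your absorption step uses ``$\bar\gamma(\delta n;(x,n)) \ge c\,\delta n$ from the trivial lower bound on resistance along the backbone path''; a path of length $\delta n$ gives an \emph{upper} bound $\Reff \le \delta n$, not a lower bound, and indeed Theorem \ref{mainthm} asserts the resistance is $O((\delta n)^{1-\alpha})$, so no such lower bound holds. Consequently the scheme ``bad scenarios have probability $O(\delta)$, bound resistance crudely there, absorb into $C_4\delta\,\bar\gamma$'' cannot produce the stated $(1+C_4\delta)$ factor.

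The missing idea in the paper is different in kind. On the event that no side tree of the block reaches $(i+2)\delta n$, the conditioned block is \emph{exactly} distributed as $\T_{\delta n, 2\delta n}$, which is literally covered by the supremum over $m\ge 2\delta n$ in the definition of $\gamma$ (no ``small distortion'' argument is needed), giving the clean term $\bar\gamma(\delta n;(x,n))\,\prob(\cdot)$. When $k\ge 1$ side trees do reach $(i+2)\delta n$, the paper replaces the event $\F(j_1,\dots,j_k)$ (reach $(i+2)\delta n$ but die before $m$) by the larger \emph{increasing} event $\F'(j_1,\dots,j_k)$ (just reach $(i+2)\delta n$), applies the FKG inequality with the \emph{decreasing} variable $\Reff$ to get $\E[\Reff\,{\bf 1}_{\F'}]\le \E[\Reff]\,\prob(\F')\le \bar\gamma(\delta n;(x,n))\prob(\F')$, and then shows $\prob(\F')\le (1+O(\delta))^k\,\prob(\F)$ by a branching-process computation: conditionally on $\F'$, the population $\NN$ of the surviving trees at level $(i+2)\delta n$ has $\E[\NN^{(s)}\mid \NN^{(s)}>0]=O(\sigma^2\delta n)$ while the per-particle extinction probability before level $m$ is $1-\theta(m-(i+2)\delta n)\ge 1-6/(\sigma^2 n)$, so each surviving tree costs only a factor $1-O(\delta)$. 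Finally, the number $k$ of surviving side trees has an exponential tail ($\le e^{-ck}$, from a Binomial$(\delta n, C/\delta n)$ domination), so summing $(1+O(\delta))^k e^{-ck}$ over $k$ gives the $(1+C_4\delta)$ for small $\delta$; the wrong-location case (iii) is the $k=1$ instance of the same argument. Your proposal contains none of this FKG/probability-ratio mechanism, so as written it has a genuine gap at exactly the step the lemma is about.
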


\begin{proof} By the triangle inequality of effective resistance \eqref{triangle} we have
\eqnspl{e:R-triang}
{ &\E \Big [ \Reff(\Phi(X_i) \lra \Phi(X_{i+K})) \, \Big| \, \cH_{(1)}, \Phi(V_n) = (x,n) \Big ] \\
  &\qquad \leq \sum_{i'=i}^{i+K-1} \E \Big [ \Reff(\Phi(X_{i'}) \lra \Phi(X_{i'+1})) \, \Big| \,
      \cH_{(1)}, \Phi(V_n) = (x,n) \Big ] \, .}
The terms $i' = i+1, \ldots, i+K-1$ are not affected by the conditioning on $\cH_{(1)}$, and hence
we get the $(K-1) \bar{\gamma}(\delta n; (x,n))$ term. So it remains to prove that
\eqnsplst
{ &\E \Big [ \Reff(\Phi(X_i) \lra \Phi(X_{i+1})) {\bf 1}_{\cH_{(1)}} \, \mid \, \Phi(V_n) = (x,n) \Big ] \\
  &\qquad\qquad \leq (1+O(\delta)) \bar{\gamma}(\delta n; (x,n)) \prob( \cH_{(1)} | \Phi(V_n) = (x,n) ) \, .}

If $\cH_{(1)}$ occurs, then precisely one of the following three disjoint events must happen:
\begin{enumerate}
\item[(i)] There are no levels in $[i\delta n, (i+1)\delta n)$ that reach height $(i+2)\delta n$,
\item[(ii)] There are more than one such levels,
\item[(iii)] There is a unique such level $\ell_1$ but $\ell_1 \not \in [(i+1/4) \delta n, (i+3/4)\delta n]$.
\end{enumerate}
We handle each of these separately. If (i) occurs, then the trees emanating from each level are conditioned
not to reach level $(i+2)\delta n$. Hence,
\eqnsplst
{ &\E \Big [ \Reff(\Phi(X_i) \lra \Phi(X_{i+1})) {\bf 1}_{\hbox{(i)}} \, \mid \, \Phi(V_n) = (x,n) \Big ] \\
  &\qquad\qquad \leq \bar{\gamma} (\delta n; (x,n)) \prob( \hbox{(i)} \, | \, \Phi(V_n) = (x,n)) \, ,}
since in the definition of $\gamma(\delta n; (x,n))$ we take a supremum over $m \geq 2 \delta n$. \\

In handling the event (ii), the following notation will be convenient.
We write $\Reff(\Phi(X_i) \stackrel{\G}{\lra} \Phi(X_{i+1}))$ for the
effective resistance evaluated in a given graph $\G$.
If (ii) occurs, then let $j_1, \ldots, j_k$ be the set of levels in $[i\delta n, (i+1)\delta n)$ such that
$k \geq 2$ and $\T_{n,m}(j_s)$ reaches level $(i+2)\delta n$ but not level $m$ for all
$s = 1, \ldots, k$ and denote by $\F(j_1,\ldots, j_k)$ this event. Let $\T_{n,\infty}$ be defined as $\T_{n,m}$ only without the conditioning on the side branches.
%
We have
\eqnsplst
{ &\E \Big[ \Reff \Big( \Phi(X_i) \stackrel{\T_{n,m}}{\lra} \Phi(X_{i+1}) \Big) \, \Big| \,
     \hbox{(ii)},\, \Phi(V_n) = (x,n) \Big] \\
  &\qquad = \sum_{\substack{k \ge 2 \\ (j_1,\dots,j_k)}}
     \E \Big[ \Reff \Big( \Phi(X_i) \stackrel{\T_{n,\infty}}{\lra} \Phi(X_{i+1}) \Big) \, \Big| \,
     \F(j_1,\dots,j_k),\, \Phi(V_n) = (x,n) \Big] \\
  &\qquad\qquad\qquad \times \prob ( \F(j_1,\dots,j_k) \,|\, \hbox{(ii)} ) \, ,}
since the events in question require that all side branches emanating from $V_{i \delta n}$ to $V_{(i+K)\delta n}$ do not reach level $m$. During the rest of the proof of (ii) we work where $\T_{n,\infty}$ is the background measure.

Write $\F'(j_1,\ldots j_k)$ for the same event as $F(j_1,\dots,j_k)$ except that the trees
$\T_{n,\infty}(j_s)$ are now only required to reach level $(i+2)\delta n$
(and may perhaps reach level $m$ as well). Since $\F \subseteq \F'$ we have
\eqnspl{e:F-by-F'}
{ &\E \Big [ \Reff(\Phi(X_i) {\lra}
      \Phi(X_{i+1})) {\bf 1}_{\F(j_1,\ldots, j_k)} \, \mid \, \Phi(V_n) = (x,n) \Big ] \\
  &\qquad \leq \E \Big [ \Reff(\Phi(X_i) {\lra}
      \Phi(X_{i+1})) {\bf 1}_{\F'(j_1,\ldots, j_k)} \, \mid \,
     \Phi(V_n) = (x,n) \Big ] \, .}
Since $\F'$ is an increasing event and
$\Reff(\Phi(X_i) \lra \Phi(X_{i+1}))$ is a decreasing random variable,
the FKG inequality \cite{Harris,FKG} implies that the right hand side of
\eqref{e:F-by-F'} is at most
\eqnsplst
{ &\E \Big [ \Reff(\Phi(X_i) \stackrel{\T_{n,\infty}}{\lra}
     \Phi(X_{i+1}))  \, \mid \, \Phi(V_n) = (x,n) \Big ]
     \prob ( \F'(j_1,\ldots, j_k) ) \\
  &\qquad \le \prob ( \F'(j_1,\ldots, j_k) ) \, \bar{\gamma}(\delta n; (x,n)) \, ,}
where in the last step we are using that $\T_{n,\infty}$ is the weak limit as
$m \to \infty$ of $\T_{n,m}$.

We need to bound the ratio between the probability of $\F$ and $\F'$. Write $\NN$ for the total number of
progeny at level $(i+2) \delta n$ of $\T_{n,\infty}(j_1), \ldots, \T_{n,\infty}(j_k)$. Then,
\eqnsplst
{ \prob(\F)
  &\geq \prob(\F') \E \big[ (1 - \theta(m - (i+2)\delta n))^\NN \, \big| \, \F' \big] \\
  &\geq \prob(\F') \E \Big[ \left(1 - 6 (\sigma^2 n)^{-1} \right)^\NN \, \Big| \, \F' \Big] \, ,}
where the last inequality is by $m - (i+2) \delta n \ge (1 - 2 \delta) n \ge n/2 \ge n_5$ and
our estimate on $\theta$ \eqref{e:Kolm2}. Note that
$\NN = \NN^{(1)} + \ldots + \NN^{(k)}$ where $\NN^{(1)}, \ldots, \NN^{(k)}$ are independent and
$\NN^{(s)}$ has the distribution of $\widetilde{\NN}_{(i+2) \delta n - j_s}$,
$s = 1, \dots, k$. Hence,
\eqnsplst
{ \E \Big[ \left( 1 - 6 (\sigma^2 n)^{-1} \right)^\NN \Big| \F' \Big]
  &\geq \prod_{s=1}^k \E \Big[ \left( 1 - 6 (\sigma^2 n)^{-1} \right)^{\NN^{(s)}} \,
      \Big| \, \NN^{(s)} > 0 \big] \\
  &\geq \prod_{s=1}^k \E \big[ ( 1 - \NN^{(s)} 6 (\sigma^2 n)^{-1} ) \, \mid \,
      \NN^{(s)} > 0 \big] \\
  &\geq (1 - O(\delta))^k \, ,}
since $\E[ \NN^{(s)} | \NN^{(s)} > 0] = \E \widetilde{\NN}_{2 \delta n - j_s} \prob ( \widetilde{\NN}_{2 \delta n - j_s} > 0 )^{-1}
= O( \sigma^2 \delta n )$ when $\delta n \ge n_7$, by \eqref{e:survival-uncond}. Hence,
$$ \prob(\F'(j_1,\ldots, j_k)) \leq (1 + O(\delta))^k \prob(\F(j_1,\ldots,j_k)) \, .$$
Therefore,
\eqnsplst
{ &\E \Big [ \Reff(\Phi(X_i) \stackrel{\T_{n,m}}{\lra}
      \Phi(X_{i+1}))  \, \mid \, \F(j_1,\ldots, j_k), \Phi(V_n) = (x,n) \Big ] \\
  &\qquad\qquad \leq (1+O(\delta))^k \bar{\gamma}(\delta n; (x,n)) \, .}

In the tree $\T_{n,\infty}$, and hence in the tree $\T_{n,m}$,
the number of vertices $V_k$ on the backbone
that reach $(i+2)\delta n$ is stochastically bounded above by a Binomial random
variable with parameters $\delta n$ and $p = {C \over \delta n}$,
by \eqref{e:survival-uncond}. Hence, the probability
that there are precisely $k$ such vertices is at most $e^{-ck}$ for some $c > 0$.
We get that as long as $\delta > 0$ is small enough (as a function of $c$) we have
\eqnsplst
{ &\E \Big [ \Reff(\Phi(X_i) \stackrel{\T_{n,m}}{\lra}
      \Phi(X_{i+1})) {\bf 1}_{\hbox{(ii)}} \, \mid \, \Phi(V_n) = (x,n) \Big ] \\
  &\qquad\qquad \leq (1+O(\delta)) \bar{\gamma} (\delta n; (x,n))
      \prob( \hbox{(ii)} \, | \, \Phi(V_n) = (x,n)) \, ,}
concluding the analysis of (ii). \\

If (iii) occurs, then there is a unique $\ell_1$ which reaches level $(i+2)\delta n$ but not
$m$ and all other levels do not reach level $(i+2)\delta n$. A similar analysis as in (ii) with $k=1$
using the FKG inequality gives that
\eqnsplst
{ &\E \Big [ \Reff(\Phi(X_i) \lra \Phi(X_{i+1})) {\bf 1}_{\hbox{(iii)}} \, \mid \, \Phi(V_n) = (x,n) \Big ] \\
  &\qquad\qquad \leq (1+O(\delta)) \bar{\gamma} (\delta n; (x,n)) \prob( \hbox{(iii)} \, | \, \Phi(V_n) = (x,n)) \, .}
\end{proof}

\begin{lemma}
\label{badtree2}
\begin{equation*}
\begin{split}
  &\E \Big [ \Reff(\Phi(X_i) \lra \Phi(X_{i+K})) \, \Big| \,
      \cH_{\mathrm{(2)}},\, \ell_1,\, \Phi(V_n) = (x,n) \Big ] \\
  &\qquad \leq \bar{\gamma}(\ell_1 - i\delta n; (x,n)) + 1
      + \bar{\gamma}((i+1)\delta n - \ell_1-1) \\
  &\qquad\quad + (K-1) \bar{\gamma}(\delta n; (x,n)) \, .
\end{split}
\end{equation*}
\end{lemma}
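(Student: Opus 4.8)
The plan is to combine the metric triangle inequality \eqref{triangle} for effective resistance with Rayleigh monotonicity and the definition of $\gamma$ (hence of $\bar\gamma$), reducing everything to individual backbone stretches. First I would apply \eqref{triangle} to split off the $K$ stretches,
\[
  \Reff(\Phi(X_i)\lra\Phi(X_{i+K}))\;\le\;\sum_{i'=i}^{i+K-1}\Reff(\Phi(X_{i'})\lra\Phi(X_{i'+1})),
\]
and then split the first summand once more at the backbone vertices $V_{\ell_1}$ and $V_{\ell_1+1}$,
\[
  \Reff(\Phi(X_i)\lra\Phi(X_{i+1}))\le\Reff(\Phi(X_i)\lra\Phi(V_{\ell_1}))+\Reff(\Phi(V_{\ell_1})\lra\Phi(V_{\ell_1+1}))+\Reff(\Phi(V_{\ell_1+1})\lra\Phi(X_{i+1})).
\]
The middle term is the effective resistance across the single tree edge $\{V_{\ell_1},V_{\ell_1+1}\}$ of $\T_{n,m}$, hence at most $1$; this produces the ``$+1$''.

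For each of the remaining resistances $\Reff(\Phi(V_a)\lra\Phi(V_b))$ --- with $(a,b)$ equal to $(i\delta n,\ell_1)$, $(\ell_1+1,(i+1)\delta n)$, or $(i'\delta n,(i'+1)\delta n)$ for $i+1\le i'\le i+K-1$ --- I would pass by Rayleigh monotonicity (deleting edges cannot decrease effective resistance) to the subgraph $\G_{a,b}$ of $\Phi(\T_{n,m})$ spanned by the backbone segment $V_a,\dots,V_b$ together with the side trees rooted at $V_a,\dots,V_{b-1}$; in particular this deletes $\T_{n,m}(\ell_1)$, which pokes up out of block $i$, and all backbone parts outside $[a,b]$. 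The crucial bookkeeping point is that, conditionally on $\cH_{\mathrm{(2)}}$, $\ell_1$ and $\Phi(V_n)=(x,n)$, the subgraph $\G_{a,b}$ has exactly the law of $\Phi(\T_{b-a,m'})$ conditioned on $\Phi(V_{b-a})=(S(b)-S(a),\,b-a)$, for a suitable $m'\ge 2(b-a)$. Indeed, given $\ell_1$, the event $\cH_{\mathrm{(2)}}$ only reweights the side trees born in the first block: those rooted at $V_j$ with $j\in[i\delta n,(i+1)\delta n)\setminus\{\ell_1\}$ are conditioned not to reach level $(i+2)\delta n$ --- equivalently not to reach $(i+2)\delta n-j$ generations above $V_j$, which subsumes the original ``$\le m$'' conditioning since $(i+2)\delta n\le n\le m$ --- while $\T_{n,m}(\ell_1)$ carries the extra ``UDP fails'' conditioning but is absent from every $\G_{a,b}$. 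Reading ``not reaching $(i+2)\delta n-j$ generations above $V_j$'' in the height coordinate of $\G_{a,b}$ gives $m'=2\delta n$ for $(a,b)=(i\delta n,\ell_1)$, $m'=(i+2)\delta n-\ell_1-1$ for $(a,b)=(\ell_1+1,(i+1)\delta n)$, and $m'=m-i'\delta n$ for the untouched blocks; the inequality $m'\ge 2(b-a)$ follows in each case from $(i+\tfrac14)\delta n\le\ell_1\le(i+\tfrac34)\delta n$ and $i+K\le\delta^{-1}$.

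Since the backbone walk is independent of all tree data, conditioning additionally on $S(b)-S(a)=y$ leaves $\G_{a,b}$ distributed as $\Phi(\T_{b-a,m'})$ conditioned on $\Phi(V_{b-a})=(y,b-a)$, so the definition of $\gamma$ (the supremum over $m'\ge 2(b-a)$) yields
\[
  \E\!\left[\Reff\!\big(\Phi(V_a)\stackrel{\G_{a,b}}{\lra}\Phi(V_b)\big)\,\Big|\,\cH_{\mathrm{(2)}},\ell_1,\Phi(V_n)=(x,n),\,S(b)-S(a)=y\right]\le\gamma(b-a,y).
\]
By exchangeability of the increments of the bridge, $S(b)-S(a)$ has, conditionally on $\Phi(V_n)=(x,n)$, the law $\p^{b-a}(o,y)\,\p^{n-(b-a)}(y,x)/\p^n(o,x)$, so averaging over $y$ turns this into $\bar\gamma(b-a;(x,n))$ by the definition of $\bar\gamma$. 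Summing, the first block contributes $\bar\gamma(\ell_1-i\delta n;(x,n))+1+\bar\gamma((i+1)\delta n-\ell_1-1;(x,n))$ and the remaining $K-1$ blocks contribute $(K-1)\bar\gamma(\delta n;(x,n))$, which is the asserted bound.

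I expect the only genuine obstacle to be the conditional-law bookkeeping in the second paragraph: one must verify carefully that, after conditioning on $\cH_{\mathrm{(2)}}$ and on $\ell_1$, each subgraph $\G_{a,b}$ really is an honest finite approximation $\Phi(\T_{b-a,m'})$ with $m'\ge 2(b-a)$ (so that the supremum over $m$ in the definition of $\gamma$ legitimately dominates it), that deleting $\T_{n,m}(\ell_1)$ renders its anomalous conditioning irrelevant, and that the conditioning $\Phi(V_n)=(x,n)$ enters only through the law of the increment $S(b)-S(a)$. The three triangle inequalities, the single-edge bound, and the exchangeability identity for bridge increments are then routine.
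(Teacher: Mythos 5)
Your proposal is correct and follows essentially the same route as the paper: triangle inequality over the $K$ blocks, a further split of the first block at $V_{\ell_1}$ and $V_{\ell_1+1}$ contributing the ``$+1$'' for the single edge, and the identification of the two remaining pieces of block $i$ as $\T_{\ell_1-i\delta n,\,2\delta n}$ and $\T_{(i+1)\delta n-\ell_1-1,\,(i+2)\delta n-\ell_1-1}$ (with $m'\ge 2(b-a)$ so that the supremum in the definition of $\gamma$ applies), the anomalous conditioning from the failure of UDP living entirely in the deleted tree $\T_{n,m}(\ell_1)$. The paper states this distributional identification without spelling out the Rayleigh-monotonicity and bridge-increment bookkeeping that you make explicit, but the argument is the same.
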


\begin{proof}
As in the previous lemma, we use the triangle inequality as in \eqref{e:R-triang}
with $\cH_{\mathrm{(1)}}$ now replaced by $\cH_{\mathrm{(2)}}$. Again, the terms
containing $\Reff(\Phi(X_{i'}) \lra \Phi(X_{i'+1}))$ for $i' = i + 1, \dots, i + K - 1$
are unaffected by the conditioning, and hence contribute the term $(K-1) \bar{\gamma}(\delta n; (x,n))$.
%
The rest of the lemma is much easier than the previous one, since on the event that
Definition \ref{ktreegood}(1) is satisfied, the backbone $V_{i\delta n},\ldots, V_{\ell_1}$ together with its side branches (not counting the side branch of $V_{\ell_1}$) is distributed as $\T_{\ell_1 - i \delta n, 2 \delta n}$, and the backbone $V_{\ell_1+1},\ldots V_{(i+1)\delta n}$ together with its side branches (again, not counting the side branch of $V_{(i+1)\delta n}$) is distributed as $\T_{(i+1) \delta n - \ell_1 - 1, 2 \delta n - \ell_1 - 1}$.
%
Hence we get
\eqnsplst
{ &\E \Big [ \Reff(\Phi(X_{i}) \lra \Phi(X_{i+1})) \, \Big| \,
      \cH_{\mathrm{(2)}},\, \ell_1,\, \Phi(V_n) = (x,n) \Big ] \\
  &\qquad \leq \bar{\gamma}(\ell_1 - i \delta n; (x,n)) + 1 + \bar{\gamma}((i+1)\delta n - \ell_1-1) \, ,}
as required.
\end{proof}

\begin{lemma} 
\label{badtree3}
We have
\begin{equation*}
\begin{split}
  &\E \Big [ \Reff(\Phi(X_i) \lra \Phi(X_{i+K})) \, \Big| \,
     \cH_{\mathrm{(3)}},\, \ell_1,\, \Phi(V_n) = (x,n) \Big ] \\
  &\qquad \leq \bar{\gamma}(\ell_1 - i \delta n; (x,n)) + 1
     + \bar{\gamma}((i+1) \delta n - \ell_1-1) \\
  &\qquad\quad + (K-2) \bar{\gamma}(\delta n; (x,n)) + (1+C_4\delta) \bar{\gamma}(\delta n; (x,n))
\end{split}
\end{equation*}
whenever $0 < \delta < \delta_2$, $\delta n \ge \max \{ n_5(C_3), n_7(C_3) \}$.
\end{lemma}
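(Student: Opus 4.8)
The plan is to mirror the proofs of Lemmas \ref{badtree1} and \ref{badtree2}: the event $\cH_{(3)}$ is the intersection of ``conditions (1) and (2) of Definition \ref{ktreegood} hold'' with ``condition (3) fails'', so it combines the bottom-of-block situation analysed in Lemma \ref{badtree2} (presence of a unique $\ell_1$ and of the $\YY$-path) with the type of failure analysed for $\cH_{(1)}$ in Lemma \ref{badtree1}, except now the failure occurs at the \emph{top} of the block. Assume without loss of generality that $i=0$, as in the earlier lemmas. Apply the triangle inequality \eqref{triangle} to write $\Reff(\Phi(X_i)\lra\Phi(X_{i+K}))\le\sum_{i'=i}^{i+K-1}\Reff(\Phi(X_{i'})\lra\Phi(X_{i'+1}))$. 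Condition (1) constrains only the side trees attached to $V_{i\delta n},\dots,V_{(i+1)\delta n-1}$; condition (2) constrains only the combinatorial structure of $\T_{n,m}(\ell_1)$, which is one of those side trees and which we shall discard; and the failure of condition (3) constrains only the side trees attached to $V_{(i+K-1)\delta n},\dots,V_{(i+K)\delta n-1}$. Hence only the first summand ($i'=i$) and the last summand ($i'=i+K-1$) are affected, while each of the $K-2$ intermediate summands ($i'=i+1,\dots,i+K-2$), bounded inside the subnetwork given by its own backbone stretch together with that stretch's side branches, contributes at most $\bar\gamma(\delta n;(x,n))$ in conditional expectation by the same Markov computation used in Lemma \ref{badtree1}; this gives the $(K-2)\bar\gamma(\delta n;(x,n))$ term.

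For the first summand I would repeat the argument of Lemma \ref{badtree2} verbatim: on $\cH_{(3)}$ condition (1) holds, so after discarding the side branch of $V_{\ell_1}$ (in particular the subtree $\T_{n,m}(\ell_1)$ constrained by condition (2), through which we never route current), the two pieces $V_{i\delta n},\dots,V_{\ell_1}$ with side branches and $V_{\ell_1+1},\dots,V_{(i+1)\delta n}$ with side branches are distributed as $\T_{\ell_1-i\delta n,2\delta n}$ and $\T_{(i+1)\delta n-\ell_1-1,\,2\delta n-\ell_1-1}$ respectively. Routing current from $\Phi(X_i)$ through $\Phi(V_{\ell_1})$, the single edge to $\Phi(V_{\ell_1+1})$, and on to $\Phi(X_{i+1})$, and using that $\gamma$ is defined via a supremum over the cutoff $m$ together with the Markov decomposition defining $\bar\gamma$, bounds this summand's conditional expectation by $\bar\gamma(\ell_1-i\delta n;(x,n))+1+\bar\gamma((i+1)\delta n-\ell_1-1)$.

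For the last summand I would re-run the case analysis of Lemma \ref{badtree1}, transplanted from the bottom of the block to the top: the failure of condition (3) means that among the levels in $[(i+K-1)\delta n,(i+K)\delta n)$, either (i) no attached subtree reaches height $(i+K+1)\delta n$, or (ii) at least two do, or (iii) exactly one does, at a level $\ell_2$ with $\ell_2\notin[(i+K-3/4)\delta n,(i+K-1/4)\delta n]$. Case (i) uses only the supremum over $m$ in the definition of $\gamma$; cases (ii) and (iii) use the FKG inequality to replace the event that certain subtrees \emph{reach} height $(i+K+1)\delta n$ by the event that they reach it \emph{but not} height $m$, the survival estimates \eqref{e:Kolm2}, \eqref{e:survival-uncond}, \eqref{e:survival-cond} and Lemma \ref{survival}, and the stochastic domination of the number of such backbone vertices by a binomial random variable with parameters $\delta n$ and $C/(\delta n)$, so that having $k$ of them costs $e^{-ck}$. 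With the same constants $C_4,\delta_2$ and the same constraints $0<\delta<\delta_2$, $\delta n\ge\max\{n_5,n_7\}$ as in Lemma \ref{badtree1}, this yields the bound $(1+C_4\delta)\bar\gamma(\delta n;(x,n))$ for the conditional expectation of the last summand, and summing the three contributions proves the lemma.

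The main point to be checked carefully—rather than a hard estimate—is that $\cH_{(3)}$ also conditions on condition (2) holding, and condition (2) constrains the combinatorial structure of $\T_{n,m}(\ell_1)$ over the long range of levels $[\ell_1,(i+K)\delta n]$; one must be sure this does not contaminate the distributions invoked for the first or the intermediate summands. The resolution is exactly the one implicit in Lemma \ref{badtree2}: condition (2) is measurable with respect to $\T_{n,m}(\ell_1)$ alone, this subtree is conditionally independent of the backbone and of all other side branches given the backbone spatial displacements, and our resistance upper bound never uses $\T_{n,m}(\ell_1)$—we always bypass it along the backbone—so the conditioning on condition (2) simply drops out of every bound. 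The only genuinely new work is re-doing the Lemma \ref{badtree1} case analysis at the top end of the block.
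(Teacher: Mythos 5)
Your proposal is correct and follows essentially the same route as the paper: the paper's proof is precisely the triangle inequality over the $K$ stretches, with the term $\Reff(\Phi(X_{i+K-1}) \lra \Phi(X_{i+K}))$ bounded by $(1+C_4\delta)\bar{\gamma}(\delta n;(x,n))$ via an argument ``almost identical'' to Lemma \ref{badtree1} (your cases (i)--(iii) at the top of the block), and the remaining terms bounded exactly as in Lemma \ref{badtree2}. Your additional observation that the conditioning on condition (2) only constrains $\T_{n,m}(\ell_1)$, which the resistance bounds bypass, is the correct (and in the paper implicit) justification.
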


\begin{proof}
We again start with the triangle inequality as in \eqref{e:R-triang}, with
$\cH_{\mathrm{(1)}}$ now replaced by $\cH_{\mathrm{(3)}}$.
An argument almost identical to that of Lemma \ref{badtree1}, yields that
the term involving $\Reff(\Phi(X_{i+K-1}) \lra \Phi(X_{i+K}))$
is bounded by $(1 + C_4 \delta) \bar{\gamma}(\delta n; (x,n))$. The
rest of the terms are bounded as in Lemma \ref{badtree2}.
\end{proof}

\begin{lemma}
\label{badtree4}
\begin{equation*}
\begin{split}
  &\E \Big [ \Reff(\Phi(X_i) \lra \Phi(X_{i+K})) \, \Big| \,
      \cH_{\mathrm{(4)}},\, \ell_1,\, \ell_2,\, \Phi(V_n) = (x,n) \Big ] \\
  &\qquad \leq \bar{\gamma}(\ell_1 - i\delta n; (x,n)) + 1
      + \bar{\gamma}((i+1)\delta n - \ell_1-1) \\
  &\qquad\quad + (K-2) \bar{\gamma}(\delta n; (x,n))
      +  \bar{\gamma}(\ell_2 - ( (i + K - 1) \delta n; (x,n) ) + 1 \\
  &\qquad\quad + \bar{\gamma}( (i + K) \delta n - \ell_2 - 1; (x,n) ) \, .
\end{split}
\end{equation*}
\end{lemma}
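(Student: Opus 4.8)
The plan is to run the argument of Lemma~\ref{badtree2} at \emph{both} ends of the block. On $\cH_{\mathrm{(4)}}$ conditions (1)--(3) of Definitions~\ref{ktreegood}--\ref{kspatialgood} hold, so $\ell_1$ and $\ell_2$ are well defined and $V_{\ell_1}$ has UDP; the failure of condition (4) concerns only the subtree hanging off $V_{\ell_2}$ at heights $\ge \ell_2$ (where $\XX'_{i+K},\XX'_{i+K+1}$ would sit) together with the subtree hanging off $\YY_{i+K}$ at heights $\ge (i+K)\delta n$, which is contained in the subtree attached at $V_{\ell_1}$. Both of these are dangling as far as a current from $\Phi(X_i)$ to $\Phi(X_{i+K})$ is concerned, so deleting their edges only raises the effective resistance; I would therefore work in the subgraph from which they are removed, and in particular the precise reason for the failure of (4) becomes irrelevant.

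First apply the triangle inequality \eqref{triangle} along the backbone,
\[
  \Reff(\Phi(X_i) \lra \Phi(X_{i+K}))
  \le \sum_{i'=i}^{i+K-1}\Reff(\Phi(X_{i'}) \lra \Phi(X_{i'+1}))\, ,
\]
and handle the interior terms and the two boundary terms separately. For $i' = i+1,\dots,i+K-2$ the backbone stretch $V_{i'\delta n},\dots,V_{(i'+1)\delta n}$ together with its side branches (not counting the one off $V_{(i'+1)\delta n}$) is, conditionally on $\cH_{\mathrm{(4)}},\ell_1,\ell_2,\Phi(V_n)=(x,n)$, distributed as $\T_{\delta n,\,m-i'\delta n}$ with $m-i'\delta n\ge 2\delta n$, and its spatial mapping given the two endpoints is exactly the one entering the definition of $\gamma$. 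Averaging $\gamma(\delta n,\cdot)$ against the law of the displacement of this stretch given $\Phi(V_n)=(x,n)$ — which by exchangeability of the walk increments is the one carrying the weights in \eqref{l:gammabar} — gives a contribution $\le \bar{\gamma}(\delta n;(x,n))$ from each such $i'$, hence $(K-2)\bar{\gamma}(\delta n;(x,n))$ in total.

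For $i' = i$, insert $V_{\ell_1}$ and $V_{\ell_1+1}$ via \eqref{triangle}: exactly as in Lemma~\ref{badtree2}, condition (1) makes $V_{i\delta n},\dots,V_{\ell_1}$ with its side branches distributed as $\T_{\ell_1-i\delta n,\,2\delta n}$ and $V_{\ell_1+1},\dots,V_{(i+1)\delta n}$ with its side branches distributed as $\T_{(i+1)\delta n-\ell_1-1,\,(i+2)\delta n-\ell_1-1}$, while $\{\Phi(V_{\ell_1}),\Phi(V_{\ell_1+1})\}$ is a single edge of resistance $\le 1$; this bounds that term by $\bar{\gamma}(\ell_1-i\delta n;(x,n)) + 1 + \bar{\gamma}((i+1)\delta n-\ell_1-1;(x,n))$. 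Symmetrically, for $i' = i+K-1$ insert $V_{\ell_2}$ and $V_{\ell_2+1}$: condition (3) makes $V_{(i+K-1)\delta n},\dots,V_{\ell_2}$ with its side branches distributed as $\T_{\ell_2-(i+K-1)\delta n,\,2\delta n}$ and $V_{\ell_2+1},\dots,V_{(i+K)\delta n}$ with its side branches as the corresponding finite tree, giving $\bar{\gamma}(\ell_2-(i+K-1)\delta n;(x,n)) + 1 + \bar{\gamma}((i+K)\delta n-\ell_2-1;(x,n))$. Summing the interior and the two boundary estimates yields the claimed inequality; note that, as in the statement, no smallness of $\delta$ or largeness of $\delta n$ is needed, since the argument is purely structural.

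The one point requiring care — the analogue of the delicate step in Lemma~\ref{badtree2} — is checking that conditioning additionally on the failure of (4) does not contaminate the laws used above: given $\ell_1,\ell_2$ and conditions (1)--(3), the two subtrees responsible for that failure are conditionally independent of every backbone stretch and every side branch we retain (they are attached at $V_{\ell_1}$ and $V_{\ell_2}$ or live above level $(i+K)\delta n$, whereas the retained pieces are built from side branches at other heights and from the backbone), so they can be discarded at no cost for the upper bound, and similarly the spatial mapping conditioning $\Phi(V_n)=(x,n)$ only turns the backbone walk into a bridge. Beyond this bookkeeping I expect no further obstacle: the lemma is an immediate combination of the arguments already carried out for Lemmas~\ref{badtree1} and \ref{badtree2}.
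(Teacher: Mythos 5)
Your proposal is correct and follows essentially the same route as the paper, which proves Lemma~\ref{badtree4} by the two-ended version of the Lemma~\ref{badtree2} argument: triangle inequality along the backbone with $V_{\ell_1},V_{\ell_1+1},V_{\ell_2},V_{\ell_2+1}$ inserted, using that the retained subgraphs, conditioned on Definition~\ref{ktreegood}(1)--(3) and $\ell_1,\ell_2$, are independent of whether (4) holds. Your additional bookkeeping (which subtrees carry the failure of (4), and the bridge weights matching \eqref{l:gammabar}) just makes explicit what the paper leaves implicit.
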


\begin{proof}
Similarly to the proof of Lemma \ref{badtree2}, we bound the resistance using
subgraphs that conditioned on Definition \ref{ktreegood}(1),(2),(3) holding
(and conditioned on the values of $\ell_1$, $\ell_2$) are independent of
whether (4) holds or not.
\end{proof}

\section{Analysis of spatially bad blocks}
\label{sec-badspatial}

In this section we analyze what happens when condition (5) or (6) in
Definition \ref{kspatialgood} fails, that is, some spatial
displacement is ``not typical'', and also what happens when $\B(i,c_0)$ fails.
Let us introduce some notation.
We write $\Gtree$ for the event
$$ \Gtree = \{ \hbox{(1)--(4)}, \ell_1, \ell_2, \Phi(V_n) = (x,n) \} \, .$$
We define a set of times
$i \delta n = T_0 < T_1 < \dots < T_{K+4} = (i+K) \delta n$,
time differences $t_1, t_2, \ldots, t_{K+4}$ and
spatial locations $z_0, \ldots , z_{K+4} \in \Z^d$ by
\begin{align*}
z_0     &= x_i            &         &                              & T_0     &= i\delta n\\
z_1     &= v_{\ell_1}     & t_1     &= \ell_1 - i \delta n         & T_1     &= \ell_1 \\
z_2     &= v_{\ell_1 + 1} & t_2     &= 1                           & T_2     &= \ell_1 + 1 \\
z_3     &= x_{i+1}        & t_3     &= (i+1) \delta n - \ell_1 - 1 & T_3     &= (i+1) \delta n \\
z_4     &= x_{i+2}        & t_4     &= \delta n                    & T_4     &= (i+2) \delta n \\
z_5     &= x_{i+3}        & t_5     &= \delta n                    & T_5     &= (i+3) \delta n \\
        &\ \ \vdots       &         &\ \ \vdots                    &         &\ \ \vdots \\
z_{K+1} &= x_{i+K-1}      & t_{K+1} &= \delta n                    & T_{K+1} &= (i+K-1) \delta n \\
z_{K+2} &= v_{\ell_2}     & t_{K+2} &= \ell_2 - (i+K-1) \delta n   & T_{K+2} &= \ell_2 \\
z_{K+3} &= v_{\ell_2+1}   & t_{K+3} &= 1                           & T_{K+3} &= \ell_2 + 1 \\
z_{K+4} &= x_{i+K}        & t_{K+4} &= K \delta n - \ell_2 - 1     & T_{K+4} &= (i+K) \delta n
\end{align*}
Observe that conditional on $\Gtree$, the times $T_s$ and time differences
$t_s$ are non-random but the spatial locations $z_s$ are random.
Furthermore, we define for any $s = 1,\ldots, K+4$
\eqn{e:qs}
{ \q_s(z)
  = \sum_{\substack{\| y_r \| \leq \sqrt{t_r} \\ r = 1, \dots, s-1 \\
    y_1 + \cdots + y_{s-1} = z}}
    \prod_{r=1}^{s-1} \p^{t_r}(0,y_r) \, .}
Finally, for any $s = 1, \ldots, K+4$ we define the event $\EE_{(5)}^s$ by
\eqnst
{ \EE_{(5)}^s
  = \bigcap_{r=1}^{s-1} \Big \{ \| z_r - z_{r-1} \| \leq {\sqrt{t_r}} \Big \} \ \bigcap \
    \Big \{ \| z_s - z_{s-1} \| > {\sqrt{t_s}} \Big \} \, .}
Note that
\eqn{e:pes}
{ \prob(\EE_{(5)}^s \, | \, \Gtree)
  = \sum_{z, y : \| y \| > \sqrt{t_s}} \q_s(z)
    {\p^{t_s}(z,z+y) \p^{n-(T_s-T_0)}(z+y,x) \over \p^n(o,x)} \, .}

\begin{lemma}
\label{spatialbad}
For any $s = 1, \ldots, K+4$ and $s' = 1, \ldots, K+4$ the quantity
$$ \cR_{s',s}
   = \E \Big[ \Reff( (z_{s'-1}, T_{s'-1}) \lra (z_{s'}, T_{s'}) )
     {\bf 1}_{\EE_{(5)}^s} \, \Big| \, \Gtree \big] $$
satisfies:
$$ \cR_{s',s}
   \le \prob(\EE_{(5)}^s \, | \, \Gtree) \, , \quad \text{when $s' = 2,\, K+3$,} $$
$$ \cR_{s',s}
   \le \prob(\EE_{(5)}^s \, | \, \Gtree) \gamma(t_{s'}) \, , \quad
       \text{when $s' < s$, $s' \not= 2,\, K+3$,} $$
$$ \cR_{s,s'}
   \le \sum_{z} \q_s(z) \sum_{y : \| y \| > \sqrt{t_s}}
       {\p^{t_s}(o,y) \p^{n-T_s+T_0}(y,x-z) \over \p^n(o,x)} \gamma(t_s,y) \, ,$$
when $s' = s$, $s' \not= 2,\, K+3$,
$$ \cR_{s',s}
   \le \sum_{\substack{z, y_s, y : \\ \|y_s\|>\sqrt{t_s}}}
       \q_s(z) {\p^{t_s}(o,y_s) \p^{t_{s'}}(o,y) \p^{n-t_{s'}-T_s}(y, x-z-y_s) \over \p^n(o,x)}
       \gamma(t_{s'},y) \, ,$$
when $s' > s$, $s' \not= 2,\, K+3$.
     %
\end{lemma}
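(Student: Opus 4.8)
The plan is to prove all four bounds by one common reduction applied to the single resistance $\Reff\big((z_{s'-1},T_{s'-1})\lra(z_{s'},T_{s'})\big)$, followed by a short case split. Note first that every anchor vertex $V_{T_0},\dots,V_{T_{K+4}}$ lies on the backbone, so the anchor increments $z_r-z_{r-1}$ are genuine backbone random-walk increments and $\EE_{(5)}^s$ is a function of these increments alone. The first move is deterministic. When $s'\in\{2,K+3\}$ the two terminals are joined by a single backbone edge, so $\Reff\big((z_{s'-1},T_{s'-1})\lra(z_{s'},T_{s'})\big)\le 1$ outright. When $s'\notin\{2,K+3\}$, let $\mathcal{G}_{s'}$ be the sub-multigraph of $\Phi(\T_{n,m})$ carried by the backbone stretch $V_{T_{s'-1}},\dots,V_{T_{s'}}$ together with the side branches off $V_{T_{s'-1}},\dots,V_{T_{s'}-1}$; since $\mathcal{G}_{s'}$ contains both terminals and connects them, monotonicity of effective resistance under edge deletion gives $\Reff\big((z_{s'-1},T_{s'-1})\lra(z_{s'},T_{s'})\big)\le \Reff\big((z_{s'-1},T_{s'-1})\stackrel{\mathcal{G}_{s'}}{\lra}(z_{s'},T_{s'})\big)$. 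The second move is to condition on $\Gtree$ together with all anchor positions $z_0,\dots,z_{K+4}$: under that conditioning $\EE_{(5)}^s$ is determined, the backbone inside $\mathcal{G}_{s'}$ is (Markov property) an independent bridge from $z_{s'-1}$ to $z_{s'}$ over $t_{s'}$ steps, and the side branches of $\mathcal{G}_{s'}$ — independent of the walk and of all other side branches — carry exactly the conditioning that defines some $\T_{t_{s'},m'}$. Indeed, for the interior stretches they are only conditioned not to reach level $m$ (so $m'=m-T_{s'-1}$), whereas inside the first (resp.~last) $\delta n$-block condition~(1) (resp.~(3)) conditions them in addition not to reach level $(i+2)\delta n$ (resp.\ $(i+K+1)\delta n$), which, as $(i+2)\delta n<m$, is the binding constraint and gives $m'=2\delta n$ after the obvious shift; in every case $m'\ge 2t_{s'}$. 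Conditions~(2) and~(4) touch only $\T_{n,m}(\ell_1)$ and $\T_{n,m}(\ell_2)$, which never enter $\mathcal{G}_{s'}$. Hence, by translation invariance of the random-walk mapping and the definition of $\gamma$,
\[
  \E\Big[\,\Reff\big((z_{s'-1},T_{s'-1})\stackrel{\mathcal{G}_{s'}}{\lra}(z_{s'},T_{s'})\big)\ \Big|\ \Gtree,\, z_0,\dots,z_{K+4}\,\Big]\ \le\ \gamma\big(t_{s'},\,z_{s'}-z_{s'-1}\big).
\]

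It remains to integrate this bound against $\mathbf{1}_{\EE_{(5)}^s}$ and the conditional law of the anchor increments given $\Gtree$, which is exactly the object behind \eqref{e:qs}--\eqref{e:pes}. If $s'=2$ or $s'=K+3$, the above gives $\cR_{s',s}\le\prob(\EE_{(5)}^s\mid\Gtree)$. If $s'<s$ and $s'\notin\{2,K+3\}$, then on $\EE_{(5)}^s$ the $s'$-th displacement is typical, so $\gamma(t_{s'},z_{s'}-z_{s'-1})\le\gamma(t_{s'})$ and the bound factors as $\gamma(t_{s'})\,\prob(\EE_{(5)}^s\mid\Gtree)$. If $s'=s$ (resp.~$s'>s$), the displacement feeding $\gamma$ is the bad one $y=z_s-z_{s-1}$ with $\|y\|>\sqrt{t_s}$ (resp.\ a later, unconstrained displacement), so $\gamma$ cannot be released; instead I would expand the conditional law of the increments exactly as in the derivation of \eqref{e:pes}: the first $s-1$ typical steps produce the kernel $\q_s(\cdot)$, the bad step $s$ produces $\p^{t_s}(o,\cdot)$ restricted to $\|\cdot\|>\sqrt{t_s}$, the $s'$-th stretch produces $\p^{t_{s'}}$ and carries the weight $\gamma(t_{s'},\cdot)$ (these two stretches coincide when $s'=s$), and all remaining stretches — including the backbone pieces $[0,i\delta n]$ and $[(i+K)\delta n,n]$ lying outside the block — collapse under Chapman--Kolmogorov and the symmetry of $\p$ to a single transition kernel terminating at $x$, divided by $\p^n(o,x)$. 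Reading off the result gives the two displayed right-hand sides.

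The work here is bookkeeping rather than ideas. The one genuinely delicate point is the distributional identification of $\mathcal{G}_{s'}$: getting the level $m'$ right via the binding-constraint observation, verifying $m'\ge 2t_{s'}$ so that $\gamma(t_{s'},\cdot)$ applies, and confirming that conditions~(2) and~(4) leave the side branches inside $\mathcal{G}_{s'}$ untouched. In the cases $s'\ge s$ the second place a slip could hide is in tracking which increments are typical, which one is bad, which carries the $\gamma$-weight, and which get folded into the Chapman--Kolmogorov kernel.
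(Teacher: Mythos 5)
Your proposal is correct and follows essentially the same route as the paper: the trivial bound for $s'=2,K+3$, conditioning on the anchor locations so that each stretch's conditional expected resistance is bounded by $\gamma(t_{s'},z_{s'}-z_{s'-1})$ (released to $\gamma(t_{s'})$ when $s'<s$), and then expanding the increment law and collapsing the remaining kernels via Chapman--Kolmogorov to obtain the displayed sums for $s'\ge s$. The only difference is that you spell out explicitly (via subgraph monotonicity and the identification of the side-branch conditioning as some $\T_{t_{s'},m'}$ with $m'\ge 2t_{s'}$) a distributional step the paper leaves implicit; this is a useful clarification, not a different argument.
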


\begin{proof}
The case $s' = 2,\, K+3$ is trivial, so assume $s' \not= 2,\, K+3$.
The case $s' < s$ is easy. Condition on $\EE_{(5)}^s$ and on the spatial locations
$z_0, z_1, \ldots, z_s$ such that $\EE_{(5)}^s$ holds. Since $\| z_{s'} - z_{s'-1} \| \leq \sqrt{t_{s'}}$
we may bound the resistance between the corresponding points by $\gamma(t_{s'})$.

In order to handle the case $s' = s$, we condition on $z_0, z_1, \ldots, z_s$ such that
$\EE_{(5)}^s$ holds. With this conditioning the required resistance is bounded above by $\gamma(t_{s}, z_{s}-z_{s-1})$. So the required expectation is bounded above by
\eqnst
{ \sum_{\substack{z_0, z_1, \ldots, z_s : \\ \| z_r - z_{r-1} \| \leq \sqrt{t_r} \\
     r = 1, \dots, s-1 \\
     \| z_{s} - z_{s-1} \| > \sqrt{t_s}}}
     { \p^{T_0}(o,z_0) \over \p^{n}(o,x)} \Big[ \prod_{r=1}^s \p^{t_r}(z_{r-1},z_r) \Big]
     \p^{n - T_s}(z_s,x) \gamma(t_{s}, z_{s} - z_{s-1}) \, .}
By changing variables $y_1 = z_1-z_0, y_2=z_2-z_1, \ldots, y_{s-1} = z_{s-1}-z_{s-2}$ and $y= z_s-z_{s-1}$ and $z=y_1+\cdots+y_{s-1}$ this equals
\eqnst
{ \sideset{}{'}\sum_{z_0, y_1, \ldots, y_{s-1}, y} { \p^{T_0}(o,z_0) \over \p^{n}(o,x)}
     \Big[ \prod_{r=1}^{s-1} \p^{t_r}(o,y_r) \Big] \p^{t_s}(o,y)
     \p^{n-T_s}(z_0+z+y,x) \gamma(t_s,y) \, ,}
where $\sideset{}{'}\sum$ indicates the restriction
$\| y_1 \| \leq \sqrt{t_1}, \ldots, \| y_{s-1} \| \leq \sqrt{t_s}, \| y \| > \sqrt{t_s}$.
By summing over $z_0$ this simplifies to
\eqnsplst
{ &\sum_{z} \q_s(z) \sum_{y : \| y \| > \sqrt{t_s}}
    {\p^{t_s}(o,y) \p^{n-T_s+T_0}(y,x-z) \over \p^n(o,x)} \gamma(t_s,y) }
as required.

The case $s' > s$ is done similarly. The required expectation is bounded above by
\eqnsplst
{ \sum_{\substack{z_0, z_1, \ldots, z_s : \\ \| z_r - z_{r-1} \| \leq \sqrt{t_r} \\
     r = 1, \dots, s-1 \\
     \| z_{s} - z_{s-1} \| > \sqrt{t_s}}} \sum_{z_{s'-1},y \in \Z^d} { \p^{T_0}(o,z_0) \over \p^{n}(o,x)} \Big[ \prod_{r=1}^s \p^{t_r}(z_{r-1},z_r) \Big] \p^{T_{s'-1}-T_s}(z_{s},z_{s'-1}) \\ \times \p^{t_{s'}}(z_{s'-1},z_{s'-1}+y)\p^{n-T_{s'}}(z_{s'-1}+y,x) \gamma(t_{s'},y) \, .
          }
Summing over $z_0, z_{s'-1}$ and recalling \eqref{e:qs} simplifies this to
\eqnsplst {  \sum_{\substack{z, y_s, y : \\ \|y_s\|>\sqrt{t_s}}} \q_s(z) {\p^{t_s}(o,y_s) \p^{t_{s'}}(o,y) \p^{n-t_{s'}-T_s}(y, x-z-y_s) \over \p^n(o,x)} \gamma(t_{s'},y) }
\end{proof}

Next, to handle part (6) of Definition \ref{kspatialgood} recall that we defined
\eqnst
{ \EE_{\mathrm{(6)}}
  = \bigcap_{s=1}^{K+4} \big \{ \| z_s - z_{s-1} \| \leq \sqrt{t_s} \big \}\
    \bigcap \ \big \{\text{one of the conditions in (6) fails}\big \} \, .}
We also define $\EE_{\mathrm{(7)}} = \A(i) \cap \B(i,c_0)^c$.

\begin{lemma}
\label{badtree6}
\eqnsplst
{ &\E \Big[ \Reff(\Phi(X_i) \lra \Phi(X_{i+K}))  \, \Big| \,
      {\bf 1}_{\EE_{\mathrm{(6)}} \cup \EE_{\mathrm{(7)}}}, \Gtree \Big] \\
  &\qquad \leq (K - 2) \gamma(\delta n) + \gamma(\ell_1 - i \delta n)
      + 1 + \gamma((i+1) \delta n - \ell_1 - 1) \\
  &\qquad\quad + \gamma(\ell_2 - (i+K-1) \delta n) + 1 + \gamma((i+K) \delta n - \ell_2 - 1) \, .}
\end{lemma}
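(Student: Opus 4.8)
The plan is to split the backbone path from $X_i=(z_0,T_0)$ to $X_{i+K}=(z_{K+4},T_{K+4})$ at the marked times $T_0<T_1<\dots<T_{K+4}$ and apply the triangle inequality \eqref{triangle}:
\[ \Reff(\Phi(X_i) \lra \Phi(X_{i+K}))
   \le \sum_{s=1}^{K+4} \Reff\big( (z_{s-1}, T_{s-1}) \lra (z_s, T_s) \big)\, , \]
so it suffices to bound the conditional expectation of each summand given $\Gtree$ and $\EE:=\EE_{\mathrm{(6)}}\cup\EE_{\mathrm{(7)}}$. For $s=2$ and $s=K+3$ (where $t_s=1$) the two endpoints are joined by the single backbone edge $\{V_{\ell_1},V_{\ell_1+1}\}$, respectively $\{V_{\ell_2},V_{\ell_2+1}\}$, so the resistance is at most $1$ deterministically, producing the two $+1$ terms. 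For every other $s$ I will show the conditional expectation of the $s$-th summand is at most $\gamma(t_s)$; summing over $s$, using $t_1=\ell_1-i\delta n$, $t_3=(i+1)\delta n-\ell_1-1$, $t_4=\dots=t_{K+1}=\delta n$, $t_{K+2}=\ell_2-(i+K-1)\delta n$ and $t_{K+4}=(i+K)\delta n-\ell_2-1$, then yields exactly the asserted bound.

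Fix $s\notin\{2,K+3\}$ and let $\T^{(s)}$ be the subtree of $\T_{n,m}$ consisting of the backbone stretch $V_{T_{s-1}},\dots,V_{T_s}$ together with the side branches hanging off $V_{T_{s-1}},\dots,V_{T_s-1}$ (but not off $V_{T_s}$). Since $\Phi(\T^{(s)})$ is a subgraph of $\Phi(\T_{n,m})$ containing both terminals, Rayleigh's monotonicity law bounds the resistance between them in $\Phi(\T_{n,m})$ by that inside $\Phi(\T^{(s)})$. I would then check that, conditionally on $\Gtree$, the subtree $\T^{(s)}$ with heights shifted down by $T_{s-1}$ has the law of $\T_{t_s,m'}$ for a suitable $m'\ge 2t_s$: among the tree-conditions (1)--(4) making up $\Gtree$, the only ones constraining side branches inside $\T^{(s)}$ are (1) (when $s\in\{1,3\}$) and (3) (when $s\in\{K+2,K+4\}$), each of which only asks those branches not to reach a prescribed level, which is precisely the conditioning in the definition of $\T_{t_s,\cdot}$; for $s\in\{4,\dots,K+1\}$ no condition of $\Gtree$ touches $\T^{(s)}$ at all; and a short computation using $\ell_1\in[(i+1/4)\delta n,(i+3/4)\delta n]$, the analogous bound for $\ell_2$, and $(i+K)\delta n\le n\le m/2$ gives $m'\ge 2t_s$ in each case. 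Since the conditioning $\Gtree$ also fixes $\Phi(V_n)=x$, which by the Markov property of the backbone path (and because no side branch meets $V_n$) affects $\T^{(s)}$ only through the terminals $z_{s-1},z_s$, it follows that, conditionally on $\Gtree$ and on $z_s-z_{s-1}=w$, the resistance between the terminals inside $\Phi(\T^{(s)})$ has conditional expectation at most $\gamma(t_s,w)$, by the definition of $\gamma$.

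It remains to argue that further conditioning on $\EE$ changes nothing. The crucial observation is that for $s\notin\{2,K+3\}$ the subtree $\T^{(s)}$ contains neither the branch off $V_{\ell_1}$ (which carries the $\YY_\bullet$) nor the branch off $V_{\ell_2}$ (which carries the $\XX'_\bullet$), nor any other backbone stretch; hence, given $\Gtree$, the event $\EE$ is measurable with respect to the $\sigma$-algebra generated by all backbone increments outside $(T_{s-1},T_s]$ and all branch mappings outside $\T^{(s)}$, together with $\sigma(z_s-z_{s-1})$. Indeed, the typically-spaced conditions making up (5), (6) and $\EE_{\mathrm{(6)}}$ are displacements internal to those external pieces, the only delicate exception being $\|x'_{i+K}-y_{i+K}\|\le\sqrt{\delta n}$, which depends on $\T^{(s)}$ only through $v_{\ell_2}-v_{\ell_1}=\sum_{r=2}^{K+2}(z_r-z_{r-1})$, hence only through $z_s-z_{s-1}$; and likewise the count $|\I|$ entering $\B(i,c_0)^c$ depends on $\T^{(s)}$ only through $z_s-z_{s-1}$. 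Since the resistance inside $\Phi(\T^{(s)})$, conditioned on $z_s-z_{s-1}$ and $\Gtree$, is independent of that external $\sigma$-algebra, the conditional expectation of the $s$-th summand given $\Gtree$ and $\EE$ is at most the conditional mean of $\gamma(t_s,z_s-z_{s-1})$; and on $\EE$ one has $\|z_s-z_{s-1}\|\le\sqrt{t_s}$ for every $s\ne 2,K+3$ --- this holds on $\EE_{\mathrm{(6)}}$ by definition and on $\EE_{\mathrm{(7)}}$ because $\A(i)$ entails condition (5) --- so $\gamma(t_s,z_s-z_{s-1})\le\gamma(t_s)$, completing the argument.

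The step I expect to be the main obstacle is the bookkeeping in this last paragraph: carefully verifying that the only channel of dependence between the internal structure of $\T^{(s)}$ and the conditioning event $\EE$ is the displacement vector $z_s-z_{s-1}$ --- so that the supremum over $m$ in the definition of $\gamma(t_s,\cdot)$ legitimately absorbs the extra conditioning --- together with confirming that each $\T^{(s)}$ really does inherit the law of some $\T_{t_s,m'}$ with $m'\ge 2t_s$ once $\Gtree$ is imposed.
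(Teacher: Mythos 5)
Your proposal is correct and follows essentially the same route as the paper: the paper's (three-line) proof simply observes that on $\EE_{\mathrm{(6)}} \cup \EE_{\mathrm{(7)}}$ all displacements satisfy $\| z_s - z_{s-1} \| \le \sqrt{t_s}$, bounds each stretch by $\gamma(t_s)$ (and the two single edges by $1$), and sums via the triangle inequality. The distributional identification of each stretch with a $\T_{t_s,m'}$ and the conditional-independence bookkeeping you spell out are exactly the points the paper leaves implicit (and uses elsewhere, e.g.\ in Lemmas \ref{badtree2} and \ref{spatialbad}), so your write-up is a valid, more detailed rendering of the same argument.
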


\begin{proof}
Condition on $\Gtree$ and $\EE_{\mathrm{(6)}} \cup \EE_{\mathrm{(7)}}$. We have that
$\| z_s - z_{s-1} \| \leq \sqrt{t_s}$ for all $s = 1, \ldots, K+4$.
Hence, under this conditioning, we may bound the resistance between
$(z_{s-1}, T_{s-1})$ and $(z_s,T_s)$ by $\gamma(t_s)$, concluding the proof.
\end{proof}

We close this section with a bound on the resistance on
the ``final stretch'' between $X_{i^{\mathrm{last}}}$ and $V_n$, where
$i^{\mathrm{last}} = K ( \lf (K \delta)^{-1} \rf - 1 )$.
Observe that $K \delta n \le n - i^{\mathrm{last}} \delta n < 2 K \delta n$,
and write
\eqnst
{ n - i^{\mathrm{last}} \delta n
  = K' \delta n + \tilde{n} \, ,}
where $K \le K' \le 2K - 2$ and $\delta n \le \tilde{n} < 2 \delta n$.

\begin{lemma}
\label{lem:final-stretch}
\eqnsplst
{ &\E \Big[ \Reff(\Phi(X_{i^{\mathrm{last}}}) \lra \Phi(V_n)) \,\Big|\,
      \Phi(V_n) = (x,n) \Big] \\
  &\qquad \le K' \bar{\gamma}(\delta n; (x,n)) + \bar{\gamma}(\tilde{n}; (x,n)) \, .}
\end{lemma}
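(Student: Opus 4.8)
The plan is to split the final stretch of the backbone into $K'$ consecutive blocks of length $\delta n$ followed by one block of length $\tilde{n}$, to bound the expected resistance across each block by the relevant $\bar{\gamma}$, and to sum. Set $W_j = V_{i^{\mathrm{last}}\delta n + j\delta n}$ for $0 \le j \le K'$ and $W_{K'+1} = V_n$, and write $\Phi(W_j) = (w_j,\cdot)$, so $W_0 = X_{i^{\mathrm{last}}}$ and $w_{K'+1} = x$. The triangle inequality \eqref{triangle} gives
$$ \Reff\big(\Phi(X_{i^{\mathrm{last}}}) \lra \Phi(V_n)\big) \le \sum_{j=0}^{K'-1} \Reff\big(\Phi(W_j) \lra \Phi(W_{j+1})\big) + \Reff\big(\Phi(W_{K'}) \lra \Phi(V_n)\big) , $$
so it suffices to bound the conditional expectation of each interior term by $\bar{\gamma}(\delta n;(x,n))$ and of the last term by $\bar{\gamma}(\tilde{n};(x,n))$.

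Fix $0 \le j < K'$. Since deleting edges does not decrease effective resistance, $\Reff(\Phi(W_j) \lra \Phi(W_{j+1}))$ is at most the resistance in the subgraph of $\Phi(\T_{n,m})$ spanned by the backbone stretch from $W_j$ to $W_{j+1}$ together with the side trees attached at levels $i^{\mathrm{last}}\delta n + j\delta n, \dots, i^{\mathrm{last}}\delta n + (j+1)\delta n - 1$. Conditioning additionally on $w_j$ and $w_{j+1}$, the Markov property of the backbone walk and the independence of the side trees show that, after translating $W_j$ to the space-time origin, this subgraph is distributed exactly as $\Phi(\T_{\delta n, m'})$ conditioned on $\Phi(V_{\delta n}) = (w_{j+1}-w_j,\delta n)$, where $m' = m - (i^{\mathrm{last}}\delta n + j\delta n)$: the side tree at relative level $r$ is conditioned to die before relative level $m' - r$, matching the definition of $\T_{\delta n,m'}$, and the event $\{\Phi(V_n) = (x,n)\}$ constrains only the backbone displacements outside this stretch. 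Since $i^{\mathrm{last}}\delta n + j\delta n < n$ and $m \ge 2n$, we have $m' \ge n \ge 2\delta n$, so $m'$ lies in the range of the supremum defining $\gamma$ and therefore
$$ \E\big[\Reff(\Phi(W_j) \lra \Phi(W_{j+1})) \,\big|\, w_j, w_{j+1}, \Phi(V_n)=(x,n)\big] \le \gamma(\delta n, w_{j+1} - w_j) . $$
The identical argument with $\delta n$ replaced by $\tilde{n}$ bounds the last term by $\gamma(\tilde{n}, x - w_{K'})$.

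To average over the $w_j$, recall that given $\Phi(V_n) = (x,n)$ the backbone increments are i.i.d.\ conditioned on their sum and hence exchangeable (cf.\ the proof of Proposition \ref{prop:rw-estimates}); thus $w_{j+1} - w_j$, a sum of $\delta n$ consecutive increments, has the same conditional law as $S(\delta n)$ given $S(n) = x$, namely $\p^{\delta n}(o,y)\p^{n-\delta n}(y,x)/\p^n(o,x)$. By the tower property,
$$ \E\big[\Reff(\Phi(W_j) \lra \Phi(W_{j+1})) \,\big|\, \Phi(V_n)=(x,n)\big] \le \sum_{y \in \Z^d} \frac{\p^{\delta n}(o,y)\,\p^{n-\delta n}(y,x)}{\p^n(o,x)}\, \gamma(\delta n, y) = \bar{\gamma}(\delta n;(x,n)) , $$
and similarly the last term is at most $\bar{\gamma}(\tilde{n};(x,n))$. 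Summing the $K'$ interior blocks and the final block proves the lemma.

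The two steps needing care are the distributional identification in the second paragraph — that conditioning on the endpoint decouples from the side trees, and that the killing level of the side trees is exactly the one required for $\T_{\delta n,m'}$ with $m' \ge 2\delta n$ — and the exchangeability step, which is what makes the position of each block within the backbone irrelevant and yields precisely $\bar{\gamma}$ rather than a position-dependent bound; the rest is the triangle inequality and monotonicity of effective resistance.
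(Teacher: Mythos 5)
Your proof is correct and follows exactly the route the paper intends: the paper's own proof is the single line ``This follows from the triangle inequality for resistance,'' and your argument supplies the implicit details (block decomposition, identification of each block with $\T_{\delta n, m'}$ for an admissible $m' \ge 2\delta n$, and exchangeability of the conditioned increments yielding the $\bar{\gamma}$ averaging), all of which match the way $\bar{\gamma}$ is used in the neighbouring Lemmas \ref{badtree1} and \ref{badtree2}.
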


\begin{proof}
This follows from the triangle inequality for resistance.
\end{proof}

\section{Analysis of good blocks}
\label{sec:goodblocks}

In this section we will estimate expectations of resistances given the event
$$ \Ggood = \big \{ \Phi(V_n)=(x,n), \A(i), \B(i,c_0), \ell_1, \ell_2 \big \} \, .$$

\begin{lemma}
\label{easyestimates}
Conditional on $\Ggood$, we have
\begin{enumerate}
\item $\E \big [ \Reff( \Phi(X_i) \lra \Phi(V_{\ell_1}) ) \, \mid \, \Ggood \big ] \leq \gamma(\ell_1 - i \delta n)$.
\item $\E \big [ \Reff( \Phi(V_{\ell_1}) \lra \Phi(X_{i+1}) ) \, \mid \, \Ggood \big ] \leq \gamma((i+1) \delta n - \ell_1-1)+1$.
\item For all $i+1 \leq j \leq i+K-2$ we have
$$ \E \big [ \Reff( \Phi(X_j) \lra \Phi(X_{j+1}) ) \, \mid \, \Ggood \big ] \leq \gamma(\delta n) \, .$$
\item $\E \big [ \Reff( \Phi(X_{i+K-1}) \lra \Phi(V_{\ell_2}) ) \, \mid \, \Ggood \big ] \leq \gamma(\ell_2 - (i+K-1)\delta n)$.
\item $\E \big [ \Reff( \Phi(V_{\ell_2}) \lra \Phi(X_{i+K}) )  \, \mid \, \Ggood \big ] \leq \gamma((i+K)\delta n - \ell_2 -1) + 1$.
\item $\E \big [ \Reff( \Phi(V_{\ell_2}) \lra \Phi(X'_{i+K})) \, \mid \, \Ggood \big ] \leq \gamma((i+K)\delta n - \ell_2 -1) + 1$.
\item $\E \big [ \Reff( \Phi(V_{\ell_1}) \lra \Phi(Y_{i+1}) ) \, \mid \, \Ggood \big ] \leq \gamma((i+1) \delta n - \ell_1-1)+1$.
\item For all $i+1 \leq j \leq i+K-1$ we have
$$ \E \big [ \Reff( \Phi(Y_j) \lra \Phi(Y_{j+1}) ) \, \mid \, \Ggood \big ] \leq \gamma(\delta n) \, .$$
\end{enumerate}
\end{lemma}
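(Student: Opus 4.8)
The plan is to reduce all eight estimates to a single mechanism, applied to the appropriate ``piece'' of the trace: pass to a convenient subgraph by Rayleigh monotonicity, recognise that subgraph as a translate of $\Phi(\T_{k,m'})$ for a suitable $k$ and an $m'\ge 2k$, and then quote the definition of $\gamma$. In items (2) and (5) I would first use the triangle inequality \eqref{triangle} to peel off the single tree edge from $V_{\ell_1}$ to $V_{\ell_1+1}$, resp.\ from $V_{\ell_2}$ to $V_{\ell_2+1}$; in items (6) and (7) I would peel off the edge from $V_{\ell_2}$ to $\V_{\ell_2}^+$, resp.\ from $V_{\ell_1}$ to $\V_{\ell_1}^+$. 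Since each such edge is present in $\Phi(\T_{n,m})$ it contributes at most $1$ to the resistance, which is the ``$+1$'' in those four bounds. In every item what remains is a resistance between two tree vertices $U'\prec W$ at tree distance $k$, where $k=\ell_1-i\delta n$ in (1), $k=\delta n$ in (3) and (8), $k=\ell_2-(i+K-1)\delta n$ in (4), $k=(i+1)\delta n-\ell_1-1$ in (2) and (7), and $k=(i+K)\delta n-\ell_2-1$ in (5) and (6).

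For each such piece I would let $G$ be the subgraph of $\Phi(\T_{n,m})$ spanned by the unique tree path from $U'$ to $W$ together with \emph{all} the branches hanging off that path --- not truncated at the height of $W$ --- so that, by Rayleigh monotonicity, the resistance between $\Phi(U')$ and $\Phi(W)$ in $\Phi(\T_{n,m})$ is at most the resistance between them computed in $G$. The key claim is that on $\Ggood$, conditionally on the values of $\Phi(U')$ and $\Phi(W)$, the translate of $G$ has the law of $\Phi(\T_{k,m'})$ conditioned on $\Phi(V_k)=\Phi(W)-\Phi(U')$, with $m'\ge 2k$. When $U'$ and $W$ are main-backbone vertices (items (1), (3), (4), and the remainders of (2), (5), (6)) the hanging branches are exactly the attached critical trees; the conditioning imposed by $\D_{(1)}$ or $\D_{(3)}$ --- that these trees do not reach level $(i+2)\delta n$ resp.\ $(i+K+1)\delta n$, and in item (3) only the ambient conditioning of $\T_{n,m}$ --- is precisely the ``die before generation $m'-i$'' rule of $\T_{k,m'}$, with $m'=2\delta n$ in (1) and (4), $m'=(i+2)\delta n-\ell_1-1$ in (2), $m'=(i+K+1)\delta n-\ell_2-1$ in (5) and (6), and $m'=m-j\delta n$ in (3); the height windows $(i+1/4)\delta n\le\ell_1$, $\ell_2\le(i+K-1/4)\delta n$ together with $\delta\le 1/2$ give $m'\ge 2k$ in each. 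For the $\YY$-spine pieces (items (7) and (8)) one uses that $\T_{n,m}$ is the size-biased (spine) construction: conditioning on the unique-descendant property along the $\YY$-chain (Definition \ref{ktreegood}) size-biases the off-spine offspring at each spine vertex and forces the off-spine subtrees not to reach level $(i+2)\delta n$ (item (7)) or $(j+2)\delta n$ (item (8)), which is exactly $\T_{k,m'}$ with $m'=(i+2)\delta n-\ell_1-1$ resp.\ $m'=2\delta n$ --- again $\ge 2k$ --- provided $G$ retains the \emph{full} hanging branches, since only then do the parts of the UDP conditioning concerning heights above $W$ drop out. Conditioning in addition on $\{\Phi(V_n)=(x,n)\}$, on $\B(i,c_0)$, and on the branching structure above $W$ or otherwise disjoint (in the tree) from the piece acts only through $\Phi(U')$ and $\Phi(W)$, by the Markov property of the random-walk mapping and independence of disjoint subtrees, so it does not affect the conditional law of $G$.

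Granting the identification, the rest is immediate: on $\Ggood$ the relevant typically-spaced condition from Definition \ref{kspatialgood} (part of $K$-spatially-good, hence of $\A(i)\subseteq\Ggood$) forces $\|\Phi(W)-\Phi(U')\|\le\sqrt k$, so
$$ \E\big[\Reff(\Phi(U')\lra\Phi(W))\,\big|\,\Ggood\big]\ \le\ \gamma\big(k,\Phi(W)-\Phi(U')\big)\ \le\ \gamma(k) $$
by the definitions of $\gamma(k,\cdot)$ (a supremum over $m'\ge 2k$) and of $\gamma(k)$ (a supremum over displacements of norm $\le\sqrt k$). Adding back the peeled-off edges where present gives the eight stated bounds.

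I expect the main obstacle to be the distributional identification of $G$ in the two $\YY$-spine cases, (7) and (8). There the spine carries the generic offspring law $p$ rather than the size-biased $\tilde{p}$ that sits on the backbone of $\T_{k,m'}$, and one must verify carefully that conditioning on UDP --- which in part constrains generations above $W$ --- tilts the off-spine offspring counts and the off-spine subtrees in exactly the way the ``conditioned to die before level $m'$'' prescription does, and that including the untruncated hanging branches in $G$ is what makes the above-$W$ part of that conditioning immaterial. The remaining bookkeeping --- matching, item by item, the level to which the hanging branches are conditioned to die and checking $m'\ge 2k$ from the $K$-tree-good height windows --- is routine.
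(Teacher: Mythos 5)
Your proposal is correct and is essentially the argument the paper leaves implicit: its proof of Lemma \ref{easyestimates} simply declares the bounds immediate from Definition \ref{kspatialgood} together with the triangle inequality, which is exactly your mechanism of peeling off a unit tree edge where a ``$+1$'' appears, identifying each remaining piece (backbone or $\YY$-spine, with its full side branches) as a copy of $\T_{k,m'}$ with $m'\ge 2k$, and using the typically-spaced conditions to reduce to $\gamma(k,\cdot)\le\gamma(k)$. The spine-law identification you flag as the delicate point in items (7)--(8) does go through as you describe (UDP size-biases the off-spine offspring to $\tilde p$ and conditions the off-spine subtrees to die before the stated level), and it is the same kind of decomposition the paper makes explicit elsewhere, e.g.\ in Lemma \ref{lem:main-new} and in the proof of Lemma \ref{badtree2}.
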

\begin{proof}
The proof of (i), (iii), (iv) and (viii) is immediate by Definition \ref{kspatialgood}.
The other estimates follow almost as quickly by Definition \ref{kspatialgood} and triangle inequality
for resistance.
\end{proof}


Recall the constant $n_1$ from Lemma \ref{lem:beta-LCLT}(i)
and the constant $n_9$ from Theorem \ref{secondmomentargument}.

\begin{lemma}
\label{hardestimate}
Assume $d = 5$. There exists $C_5 < \infty$ such that we have
\eqnst
{ \E \big [ \Reff( \Phi(X'_{i+K}) \lra \Phi(Y_{i+K}) \, \mid \, \Ggood \big ]
  \leq C_5 \max_{1 \le \ell \le \delta n} \gamma(\ell) }
whenever $\delta n \ge \max \{ n_1(\p^1), n_9(\sigma^2, C_3, \p^1) \}$.
\end{lemma}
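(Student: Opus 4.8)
\emph{Reduction to the decoupled model.} Set $N_0:=\frac{c_0\sigma^4}{D^5}\sqrt{\delta n}$, so that $\B(i,c_0)=\A(i)\cap\{|\I|\ge N_0\}$; the plan is to use the $\ge N_0$ intersection points guaranteed by $\B(i,c_0)$ as parallel ``through--paths'' joining $\Phi(\XX'_{i+K})$ to $\Phi(\YY_{i+K})$. On $\Ggood$ we have $\|y_{i+K}-x'_{i+K}\|\le\sqrt{\delta n}$ by Definition~\ref{kspatialgood}(6). Conditionally on $\A(i)$ and on $x:=y_{i+K}-x'_{i+K}$, the subtree of $\XX'_{i+K}$ truncated at level $(i+K+1)\delta n$ together with its random walk map is distributed as $\Phi_1(\T_1)$ with $\T_1\sim\T_{\delta n,2\delta n}$ and $\Phi_1(\rho_1)=(o,0)$, and similarly for $\YY_{i+K}$ with an independent $\Phi_2(\T_2)$, $\Phi_2(\rho_2)=(x,0)$: this is precisely the set-up of Section~\ref{ssec:enough}, under which $|\I|$ has the stated law and $\B(i,c_0)$ becomes $\{|\I|\ge N_0\}$. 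Since discarding the rest of $\T_{n,m}$ only increases resistance, Rayleigh monotonicity reduces the lemma to
$$ \sup_{x:\,\|x\|\le\sqrt{\delta n}}\ \E\big[\Reff((o,0)\lra(x,0))\ \big|\ |\I|\ge N_0\big]\ \le\ C_5\max_{1\le\ell\le\delta n}\gamma(\ell)\, , $$
where the resistance is evaluated in $\Phi_1(\T_1)\cup\Phi_2(\T_2)$.

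\emph{Through--paths and the parallel law.} For $(U_1,U_2)\in\I$ write $R_j(V,W)$ for the effective resistance between $\Phi_j(V)$ and $\Phi_j(W)$ evaluated in $\Phi_j(\T_j)$, $j=1,2$. The tree geodesics $\rho_1\prec Z_1\prec Z_1^+\preceq U_1$ and $\rho_2\prec Z_2\prec Z_2^+\preceq U_2$, the identity $\Phi_1(U_1)=\Phi_2(U_2)$, the triangle inequality~\eqref{triangle}, and Rayleigh monotonicity (the union $\Phi_1(\T_1)\cup\Phi_2(\T_2)$ has more edges than each $\Phi_j(\T_j)$) give
$$ \Reff((o,0)\lra(x,0))\ \le\ R_{\mathrm{path}}(U_1,U_2):=R_1(\rho_1,Z_1)+1+R_1(Z_1^+,U_1)+R_2(\rho_2,Z_2)+1+R_2(Z_2^+,U_2)\,. $$
Bounding the minimum over $(U_1,U_2)\in\I$ by an average, on $\{|\I|\ge N_0\}$ we get $\Reff((o,0)\lra(x,0))\le N_0^{-1}\sum_{(U_1,U_2)\in\I}R_{\mathrm{path}}(U_1,U_2)$. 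Using $\prob(|\I|\ge N_0)\ge c_1$ (Paley--Zygmund together with Theorem~\ref{secondmomentargument}, exactly as in the proof of Theorem~\ref{goodthingshappen}) and then dropping the indicator,
$$ \E\big[\Reff((o,0)\lra(x,0))\ \big|\ |\I|\ge N_0\big]\ \le\ \frac{1}{c_1N_0}\sum_{(U_1,U_2)}\E\big[\mathbf 1_{\{(U_1,U_2)\in\I\}}\,R_{\mathrm{path}}(U_1,U_2)\big]\,, $$
the sum running over all pairs of tree vertices $U_1\in\T_1$, $U_2\in\T_2$.

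\emph{The per--pair estimate.} The heart of the matter is that for every pair
$$ \E\big[\mathbf 1_{\{(U_1,U_2)\in\I\}}\,R_{\mathrm{path}}(U_1,U_2)\big]\ \le\ C\big(\max_{1\le\ell\le\delta n}\gamma(\ell)\big)\,\prob\big((U_1,U_2)\in\I\big)\,, $$
which I would prove term by term. For $R_1(\rho_1,Z_1)$: since $Z_1$ is the highest common ancestor of $U_1$ and the backbone tip of $\T_1$, it is the backbone vertex at some level $k=h(Z_1)\le\tfrac{2}{3}\delta n$, and by Rayleigh monotonicity $R_1(\rho_1,Z_1)$ is at most the resistance between $\Phi_1(\rho_1)$ and $\Phi_1(Z_1)$ in the trace of the part of $\T_1$ strictly below level $k$, which is distributed as $\Phi$ applied to $\T_{k,2\delta n}$. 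Conditionally on the value of $\Phi_1(Z_1)$ this sub-structure is independent of everything else, the four events $\TS(Z_1^+,U_1),\TS(\rho_2,Z_2),\TS(Z_2^+,U_2),\{\Phi_1(U_1)=\Phi_2(U_2)\}$ being measurable with respect to ``everything else'' while $\TS(\rho_1,Z_1)=\{\|\Phi_1(Z_1)\|\le\sqrt k\}$ constrains only $\Phi_1(Z_1)$. Conditioning on $\Phi_1(Z_1)$ and using the definition of $\gamma$ (legitimately with $m=2\delta n\ge 2k$), the conditional mean of $R_1(\rho_1,Z_1)$ is at most $\gamma(k,\Phi_1(Z_1))$, which on $\TS(\rho_1,Z_1)$ is at most $\gamma(k)\le\max_{\ell\le\delta n}\gamma(\ell)$; averaging $\Phi_1(Z_1)$ back out and multiplying by the conditional probability of the four remaining events gives the claim for this term. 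The term $R_1(Z_1^+,U_1)$ is handled identically, the relevant sub-structure now being the spine $Z_1^+\to U_1$ with the branches hanging off it, which is distributed as $\Phi$ applied to $\T_{h(U_1)-k-1,\ 2\delta n-k-1}$ --- here one uses that the offspring count of $Z_1^+$ conditioned on carrying a distinguished descendant is size--biased and hence has law $1+\widetilde p$, matching the first generation of $\T_{\ell,m}$ --- and the controlling event is now $\TS(Z_1^+,U_1)$. The two $\T_2$--terms are symmetric, and the two edges contribute $2\,\prob((U_1,U_2)\in\I)$.

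\emph{Conclusion, and the main difficulty.} Summing the per--pair bound over $(U_1,U_2)$ gives $\sum_{(U_1,U_2)}\E[\mathbf 1_{\{(U_1,U_2)\in\I\}}R_{\mathrm{path}}]\le C(\max_{\ell\le\delta n}\gamma(\ell))\,\E|\I|$, and by Cauchy--Schwarz and~\eqref{e:2nd-moment-5D}, $\E|\I|\le(\E|\I|^2)^{1/2}\le C\sigma^4D^{-5}\sqrt{\delta n}=(C/c_0)N_0$. Substituting this into the displayed bound of the previous paragraph leaves $\E[\Reff((o,0)\lra(x,0))\mid|\I|\ge N_0]\le\frac{C}{c_0c_1}\max_{1\le\ell\le\delta n}\gamma(\ell)$, which is the assertion with $C_5:=C/(c_0c_1)$. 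I expect the main difficulty to be the bookkeeping around the factor $|\I|^{-1}$: any bound that ignores it (for instance, routing through a single intersection) overshoots by the polynomial factor $|\I|\asymp(\delta n)^{1/2}$, so the argument genuinely needs the near-matching first and second moment estimates of Theorem~\ref{secondmomentargument} --- which make $N_0$ comparable to $\E|\I|$ --- together with the conditional-independence splitting that decouples each resistance piece of $R_{\mathrm{path}}$ from the intersection event while preserving the $\T_{\ell,m}$ law that makes $\gamma$ available.
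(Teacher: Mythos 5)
Your proposal is correct and follows essentially the same route as the paper's proof: reduce to two independent copies of $\T_{\delta n,2\delta n}$ with walk maps started within $\sqrt{\delta n}$ of each other, use $|\I|\ge c_0\sigma^4 D^{-5}\sqrt{\delta n}$ on $\B(i,c_0)$ to average the through-path resistances over the intersecting pairs, and bound each piece by $\gamma(\cdot)\le\max_{1\le\ell\le\delta n}\gamma(\ell)$ via the spine decomposition of the two subtrees (your size-biasing remark is exactly what the paper's Lemmas \ref{proboftree} and \ref{lem:main-new} make precise, and your ``part of $\T_1$ strictly below level $k$'' should really be the paper's $\T^A$, the descendants of $\rho_1$ that are not descendants of $Z_1$). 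The only cosmetic difference is the final count: you invoke Cauchy--Schwarz with the second-moment bound \eqref{e:2nd-moment-5D}, whereas the paper redoes a first-moment computation using the local CLT and expected generation sizes; both yield the same $\sigma^4 D^{-5}\sqrt{\delta n}$ and hence the same constant-order conclusion.
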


For convenience we will prove Lemma \ref{hardestimate} under the assumption that there exists an $M$
such that the progeny distribution is bounded by $M$ with probability $1$. Then by taking $M\to \infty$
and keeping $n$ fixed we obtain Lemma \ref{hardestimate} in our usual generality. This is possible,
since $C_5$ does not depend on $M$, and the restriction on $\delta n$ only depends on $\sigma^2, C_3$,
so it is sufficient to approximate $\{ p(k) \}$ by some $\{ p_M(k) \}$ in such a way
that
\eqnsplst
{ 1
  &= \sum_{k = 0}^M p_M(k) = \sum_{k = 0}^M k p_M(k) \, , \\
  \sigma^2
  &= \lim_{M \to \infty} \sum_{k \ge 1} k(k-1) p_M(k) \, ,\\
  C_3
  &\ge \sup_{M \ge 1} \sum_{k \ge 1} k^3 p_M(k) \, . }
Therefore in the rest of this section we assume the bound $M$.

Given any $n$ and $m$ such that $m\geq 2n$ we regard the random tree $\T_{n,m}$ as a subtree of an infinite $M$-ary tree $T_M$ with root $\rho$ as follows: the root of $\T_{n,m}$ is mapped to $\rho$ and if $W$ is a vertex of $\T_{n,m}$ with $k$ children we map the $k$ edges randomly amongst the ${M \choose k}$ possible choices in $T_M$. Denote by $\V_n \in T_M$ the random vertex where the last backbone vertex of $\T_{n,m}$ was mapped to. The triple $(\T_{n,m}, \rho, \V_n)$ is a doubly rooted tree.
%
%
%
%
%
Define
\eqnst
{ q(k)
  := \binom{M}{k}^{-1} p(k) \, .}

\begin{lemma}\label{proboftree} For a fixed triple $(t,\rho,V)$ where $t \subset T_M$ is a tree and $V\in T_M$ at height $n$ such that $t$ does not reach level $m$ and $V$ has no children in $t$, we have
\eqn{e:T1-formula}
{ \prob ( (\T_{n,m},\rho, \V_n) = (t, \rho, V) )
  \propto
    \prod_{\substack{W \in t \\ W \not= V}} q(\deg^+_t(W)) \, ,}
where $\deg^+_t(W)$ is the number of children of $W$ in $t$.
\end{lemma}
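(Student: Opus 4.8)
The plan is to unwind the two-stage definition of $(\T_{n,m},\rho,\V_n)$ — first the abstract random tree $\T_{n,m}$ with its marked backbone, then the uniform embedding into $T_M$ — and to read off the contribution of each vertex of $t$. Fix the target $(t,\rho,V)$, let $\rho=w_0,w_1,\dots,w_n=V$ be the unique path in $T_M$ from $\rho$ to $V$, which under the event in question is exactly the image of the backbone $V_0,\dots,V_n$, and abbreviate $d(W)=\deg^+_t(W)$.

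First I would record the contribution of the branching structure. The tree $\T_{n,m}$ is obtained by conditioning independent critical branching processes attached to $V_0,\dots,V_{n-1}$ on the events $D_i=\{\text{the tree attached to }V_i\text{ does not reach generation }m-i\}$; in it, $V_0,\dots,V_{n-1}$ each have off-backbone offspring distributed as $\tilde p$, every vertex off the backbone has offspring distributed as $p$, and $V_n$ is deterministically a leaf. The hypotheses that $t$ does not reach level $m$ and that $V$ has no children in $t$ make the shape $t$ consistent with all of the $D_i$, so the conditioning only contributes the overall constant $\big(\prod_{i=0}^{n-1}\prob(D_i)\big)^{-1}$, which depends on $n$, $m$, $\{p(k)\}$ (and $M$) but not on $(t,V)$. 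Hence the branching structure contributes $\big(\prod_{i=0}^{n-1}\prob(D_i)\big)^{-1}\prod_{i=0}^{n-1}\tilde p\big(d(w_i)-1\big)\prod_{v\notin\{w_0,\dots,w_n\}}p\big(d(v)\big)$, the factor at $w_i$ coming from $w_i$ having $d(w_i)-1$ off-backbone children, and $V_n=V$ contributing $1$.

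Next I would put in the embedding. For each vertex $W$ with $d(W)$ children in $t$, choosing the correct $d(W)$-subset of the $M$ slots below $W$ has probability $\binom{M}{d(W)}^{-1}$; moreover, for each backbone vertex $w_i$ with $i<n$ there is the additional, independent event that the marked child $V_{i+1}$ is mapped to $w_{i+1}$ rather than to one of the other $d(w_i)-1$ occupied slots, which by the symmetry of the embedding among the children of $w_i$ has probability $d(w_i)^{-1}$. Multiplying the branching and embedding contributions and using $\tilde p(k-1)=k\,p(k)$, the factor $d(w_i)$ in $\tilde p(d(w_i)-1)=d(w_i)p(d(w_i))$ cancels the factor $d(w_i)^{-1}$ from the choice of spine, so each backbone vertex $w_i$ ($i<n$) contributes $p(d(w_i))\binom{M}{d(w_i)}^{-1}=q(d(w_i))$, each off-backbone vertex $v$ contributes $p(d(v))\binom{M}{d(v)}^{-1}=q(d(v))$, and $V_n=V$ contributes $1$. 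Since the vertices carrying a factor are precisely those of $t$ other than $V$, we get $\prob\big((\T_{n,m},\rho,\V_n)=(t,\rho,V)\big)=\big(\prod_{i=0}^{n-1}\prob(D_i)\big)^{-1}\prod_{W\in t,\,W\neq V}q(d(W))$, which is \eqref{e:T1-formula}.

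The computation is essentially bookkeeping; the one point that needs care is the treatment of the marked backbone — the cancellation above showing that a backbone vertex contributes $q(d(W))$ and not $d(W)\,q(d(W))$, i.e. that the size-biasing built into $\tilde p$ is exactly undone by the uniform choice of which child of $V_i$ continues the spine. Everything else — the $m$-dependence collapsing into a constant, the exclusion of $V$, the reduction to the unconditioned offspring weights — is routine once the construction is written out carefully. If one wishes to avoid automorphism factors for unordered trees, it is cleanest to run the whole argument directly on the labeled subtree $t\subset T_M$ (or with plane trees), for which the per-vertex factorization used above is literal.
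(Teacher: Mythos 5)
Your proposal is correct and follows essentially the same route as the paper: write the probability of the shape with marked backbone as $\frac{1}{Z}\prod_i \tilde p(\deg^+_t(V_i)-1)\prod_{W\ \mathrm{off\ backbone}} p(\deg^+_t(W))$, multiply by the uniform-embedding factors, and cancel the size-biasing via $\tilde p(k-1)=k\,p(k)$ against the $1/\deg$ spine-choice factor. The only cosmetic difference is that you factor the backbone embedding probability as $\binom{M}{d}^{-1}d^{-1}$ (choose the occupied slots, then which one carries the spine) while the paper writes it as $\bigl[M\binom{M-1}{d-1}\bigr]^{-1}$; these are equal, so the two computations coincide.
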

\begin{proof}
Let $\rho=V_0, V_1, \ldots, V_n=V$ be the unique path in $t$ from $\rho$ to $V$. The probability that $\T_{n,m}=t$ with this backbone equals
$$ {1 \over Z} \prod_{i=0}^{n-1} \tilde{p}(\deg^+_t(V_i)-1) \cdot \prod_{W \in t\setminus\{V_0,\ldots, V_n\}} p(\deg^+_t(W)) \, ,$$
where $Z= \prod_{i=0}^{n-1} \theta(m-i)$. Hence, the probability that $(\T_{n,m},\rho, \V_n) = (t, \rho, V)$ (as subtrees of $T_M$) equals
\eqnsplst{ {1 \over Z} & \prod_{i=0}^{n-1} \Bigg [ M {M-1\choose \deg^+_t(V_i)-1}\Bigg]^{-1} \tilde{p}(\deg^+_t(V_i)-1)  \\ \times &\prod_{W \in t\setminus\{V_0,\ldots, V_n\}}\Bigg[ {M\choose \deg^+_t(W)}\Bigg]^{-1} p(\deg^+_t(W)) \, .
}
Manipulating with $\tilde{p}(k-1)=kp(k)$ finishes the proof.
\end{proof}

%

%
%
%

For the statement of the next lemma we fix
\eqnst
{ 0 \le k_1 \leq \delta n-1 \qquad\qquad
  k_1 + 1 \leq h_u \leq \delta n 
}
Given $V \in T_M$ at level
$\delta n$ and $U \in T_M$ at level $h_u$ let $Z\in T_M$ be the highest common ancestor of $V$ and $U$ and let $Z^+$ be the unique child of $Z$ leading towards $U$. Given a tree $t \subset T_M$ such that $V, U\in t$ and $V$ does not have any children in $t$, we have a unique decomposition of $t$ into edge disjoint trees $(t^A, \rho, Z), (t^B, Z^+, U), t^C$ and $t^D$, see figure \ref{fig:to-U1}. The doubly rooted tree $(t^A, \rho, Z)$ contains all the descendants of $\rho$ that are not descendants of $Z$. The doubly rooted tree $(t^B, Z^+, U)$ contains all the descendants of $Z^+$ that are not descendants of $U$. The tree $t^C$ contains all the descendants of $U$ and finally the tree $t^D$ contains all other edges, namely, all the descendants of $Z$ that are not descendants of $Z^+$ (in particular, the edge $Z,Z^+$ is in $t^D$).

For $W \in T_M$ let $\Theta_W$ denote the tree isomorphism that takes $W$ to $\rho$ and the descendants subtree of $W$ onto $T_M$.

%
%

\psfrag{deltan}{$\delta n$}
\psfrag{2deltan}{$2 \delta n$}
\psfrag{ell1}{$k_1$}
\psfrag{h_u}{$h_u$}
\psfrag{zero}{$0$}
\psfrag{Y_1}{$V$}
\psfrag{U_1}{$\!U$}
\psfrag{Z_1}{$\!\!\!Z$}
\psfrag{Z^+_1}{$Z^+$}
\psfrag{rho}{$\rho$}
\psfrag{t1}{$t^A$}
\psfrag{t2}{$t^B$}
\psfrag{t3}{$t^C$}
\psfrag{t4}{$t^D$}

\begin{figure}
\includegraphics[scale=0.6]{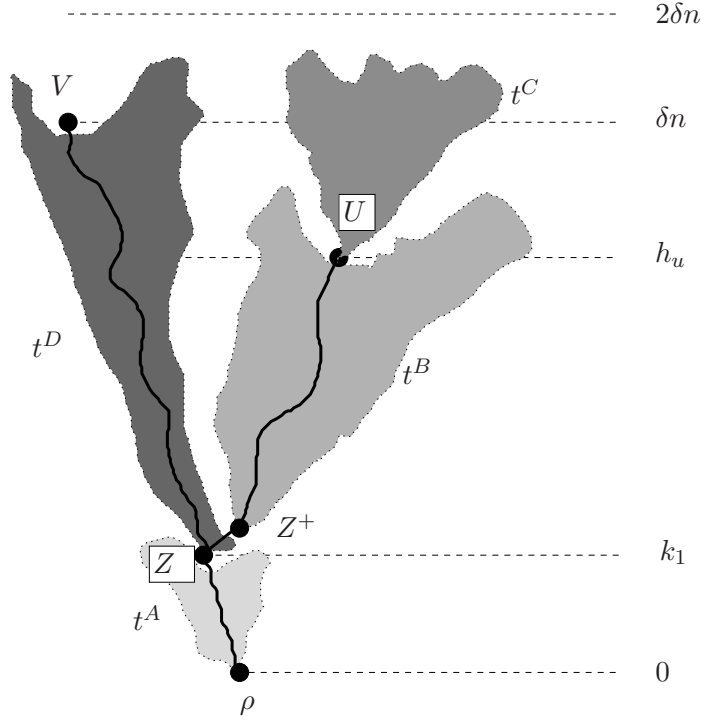}
\caption{Illustration of the decomposition into
edge-disjoint trees $t^A, t^B, t^C, t^D$ appearing
in Lemma \ref{lem:main-new} ($2 \delta n$ and $\delta n$
are not to scale).}
\label{fig:to-U1}
\end{figure}

\begin{lemma}
\label{lem:main-new}
Let $V, U\in T_M$ be at heights $\delta n$ and $h_u$, respectively and $(\T,\rho, \V)$ be distributed as
$(\T_{\delta n, 2\delta n},\rho, \V_{\delta n})$. Conditionally on the event $\{\V = V \, , U \in \T\}$
we have that
\eqnst
{ (\T^A, \rho, Z)
  \stackrel{\mathrm{d}}{=} (\T_{k_1, 2\delta n}, \rho, \V_{k_1}) \,\mid\, \V_{k_1}=Z \, ,}
and
\eqnst
{ \Theta_{Z^+}((\T^B, Z^+, U))
  \stackrel{\mathrm{d}}{=} (\T_{h_u-k_1-1, 2\delta n - k_1-1}, \rho, \V_{h_u-k_1-1}) \,\mid\,
     \V_{h_u-k_1-1}=\Theta_{Z^+}(U) \, .}
%
%
\end{lemma}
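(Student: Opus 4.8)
Below is a plan; I have not carried out the routine verifications in full.

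The plan is to reduce everything to the Gibbs‑type description of $\T_{\delta n, 2\delta n}$ embedded in $T_M$ furnished by Lemma~\ref{proboftree}. Throughout I assume that $Z$ sits at level $k_1$ (otherwise the conditional laws appearing on the right are degenerate and there is nothing to prove) and that the conditioning event has positive probability; note also that $\rho$, $Z$, $Z^+$ and $U$ are deterministic given the conditioning, so it suffices to identify the laws of the underlying trees. First, observe that imposing the extra requirement $\{U \in \T\}$ on top of $\{\V = V\}$ merely shrinks the set of admissible subtrees without changing the shape of the weight: Lemma~\ref{proboftree} gives
\eqnst
{ \prob\big( \T = t \,\big|\, \V = V,\, U \in \T \big)
  \;\propto\; \prod_{\substack{W \in t \\ W \neq V}} q(\deg^+_t(W)) \, , }
where the admissible $t \subset T_M$ are now those containing the path $\rho \to V$ and the vertex $U$, not reaching level $2\delta n$, and in which $V$ has no children.

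Next, I would invoke the edge‑disjoint decomposition $t = t^A \cup t^B \cup t^C \cup t^D$ of Figure~\ref{fig:to-U1}; the crux is that both the weight and the admissibility constraints factorize along it. For the weight, $\deg^+_t(W) = \deg^+_{t^X}(W)$ whenever $t^X$ is the unique piece recording the children of $W$: that piece is $t^D$ for $W = Z$ (all children of $Z$, including the edge to $Z^+$, lie in $t^D$), is $t^B$ for $W = Z^+$, is $t^C$ for $W = U$, and is the unique piece containing $W$ for every remaining vertex; moreover $\deg^+_{t^D}(Z^+) = 0$. Consequently
\eqnsplst
{ \prod_{\substack{W \in t \\ W \neq V}} q(\deg^+_t(W))
  &= \Bigg[\prod_{\substack{W \in t^A \\ W \neq Z}} q(\deg^+_{t^A}(W))\Bigg]
     \Bigg[\prod_{\substack{W \in t^B \\ W \neq U}} q(\deg^+_{t^B}(W))\Bigg] \\
  &\qquad \times \Bigg[\prod_{W \in t^C} q(\deg^+_{t^C}(W))\Bigg]
     \Bigg[\prod_{\substack{W \in t^D \\ W \notin \{Z^+, V\}}} q(\deg^+_{t^D}(W))\Bigg] \, . }
For the constraints, $t$ fails to reach level $2\delta n$ if and only if none of the four pieces does; $t$ contains the path $\rho \to V$ if and only if $t^A$ contains $\rho \to Z$ and $t^D$ contains $Z \to V$; $U \in t$ if and only if $t^B$ contains $Z^+ \to U$; and the condition that $V$ be childless concerns $t^D$ alone.

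Finally, I would read off the two marginals. Summing the factorized weight over all admissible completions $(t^B, t^C, t^D)$ of a fixed $t^A = s$ yields a quantity not depending on $s$ (the $t^B, t^C, t^D$‑weights and their constraints do not involve $s$ once the positions of $V$ and $U$ are frozen), so
\eqnst
{ \prob\big( \T^A = s \,\big|\, \V = V,\, U \in \T \big)
  \;\propto\; \prod_{\substack{W \in s \\ W \neq Z}} q(\deg^+_s(W)) }
over the subtrees $s$ carrying $\rho \to Z$, with $Z$ childless, not reaching level $2\delta n$. By Lemma~\ref{proboftree} with parameters $(k_1, 2\delta n)$ and marked endpoint $Z$, this is exactly the law of $(\T_{k_1, 2\delta n}, \rho, \V_{k_1})$ given $\V_{k_1} = Z$, which is the first assertion. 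Running the identical argument on the $t^B$‑marginal and transporting it by $\Theta_{Z^+}$ (which carries $Z^+$ to the root and level $\ell$ to level $\ell - k_1 - 1$, hence the level‑$2\delta n$ constraint to the level‑$(2\delta n - k_1 - 1)$ constraint) gives the Gibbs weight $\prod_{W \neq \Theta_{Z^+}(U)} q(\deg^+(W))$ over subtrees carrying $\rho \to \Theta_{Z^+}(U)$ with that endpoint childless and not reaching level $2\delta n - k_1 - 1$; by Lemma~\ref{proboftree} with parameters $(h_u - k_1 - 1,\, 2\delta n - k_1 - 1)$ — the inequality $m \ge 2n$ holding because $h_u \le \delta n$ — this is the law of $(\T_{h_u - k_1 - 1,\, 2\delta n - k_1 - 1}, \rho, \V_{h_u - k_1 - 1})$ given that its endpoint equals $\Theta_{Z^+}(U)$. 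The degenerate cases $k_1 = 0$ and $h_u = k_1 + 1$ are trivial, the relevant piece being a single vertex.

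I expect the main obstacle to be the bookkeeping at the three shared vertices $Z$, $Z^+$, $U$: one must check that $Z$ really has no children in $t^A$ and $U$ none in $t^B$ (their descendant subtrees having been excised), so that each piece satisfies the hypothesis of Lemma~\ref{proboftree} that the marked endpoint be childless, and that the out‑degree weight of every vertex of $t$ is charged to exactly one of the four products. The second point to treat with care is that the sole genuinely non‑local constraint — that $t$ not reach level $2\delta n$ — really does decouple into one constraint per piece once the positions of $V$ and $U$ have been conditioned upon.
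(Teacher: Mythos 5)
Your proposal is correct and follows essentially the same route as the paper: apply Lemma \ref{proboftree} to the conditional law, factorize the weight along the edge-disjoint decomposition $t^A, t^B, t^C, t^D$, sum out the unfixed pieces to read off each marginal, and transport $t^B$ by $\Theta_{Z^+}$ (which preserves out-degrees and shifts the level constraint). Your bookkeeping of which piece is charged with the out-degree of $Z$, $Z^+$ and $U$ is in fact slightly more careful than the paper's displayed factorization, so no further work is needed.
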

%

\begin{proof} For any $t\subset T_M$ that contains $V$ and $U$ (and $V$ has no children in $t$) by lemma \ref{proboftree} we have
$$ \prob((\T, \rho, \V) = (t, \rho, V)) \propto \prod_{\substack{W \in t \\ W \not= V}} q(\deg^+_t(W)) \, .$$
We factorize the right hand side so it equals
$$ \prod_{\substack{W \in t^A \\ W \not= Z}} q(\deg^+_t(W)) \cdot \prod_{\substack{W \in t^B \\ W \not= U}} q(\deg^+_t(W)) \cdot \prod_{\substack{W \in t^C}} q(\deg^+_t(W)) \cdot \prod_{\substack{W \in t^D \\ W \not= V}} q(\deg^+_t(W)) \, .$$
By summing over all the possible values of $t^B, t^C$ and $t^D$ we get that
$$ \prob((\T^A, \rho, Z) = (t^A, \rho, Z)) \propto \prod_{\substack{W \in t^A \\ W \not= Z}} q(\deg^+_t(W)) \, ,$$
which gives the claim for $\T^A$ by Lemma \ref{proboftree}. The same argument works similarly for $\T^B$ noting that under the shift $\Theta_{Z^+}$ the degrees do not change.
\end{proof}

\noindent{\bf Proof of Lemma \ref{hardestimate}.}
All our expectations in the following proof are conditioned on the event $\A(i), \Phi(V_n)=(x,n)$.

Let $(\T_1, \rho, \V_1)$ and $(\T_2, \rho, \V_2)$ be two independent copies of $(\T_{\delta n, 2\delta n}, \rho, \V_{\delta n})$ randomly embedded into $T_M$ as before. Conditionally on $\A(i)$ let $\Phi_1$ and $\Phi_2$ be independent random walk
mappings of $T_M$ such that $\Phi_1(\rho)=\Phi(\XX'_{i+K})$ and $\Phi_2(\rho)=\Phi(\YY_{i+K})$, so that $\|\Phi_1(\rho) - \Phi_2(\rho)\|\leq \sqrt{\delta n}$. In this way, the required quantity
$\Reff( \Phi(\XX'_{i+K}) \lra \Phi(\YY_{i+K}))$ is distributed as $\Reff( \Phi_1(\rho) \lra \Phi_2(\rho))$ where in the latter, the resistance is computed in the graph $\Phi_1(\T_1) \cup \Phi_2(\T_2)$.

For notational convenience, and without loss of generality, we assume
that $\Phi_1(\rho_1) = (o,0)$, $\Phi_2(\rho_2) = (x,0)$,
with $\| x \| \le \sqrt{\delta n}$.
Recall the notation $Z_1, Z_1^+, Z_2, Z_2^+$ introduced after
Definition \ref{def:kgood}. Definition \ref{intersect}
adapted to the current setting reads as follows:

\begin{definition}
\label{intersect-random}
We say that the vertices $U_1, U_2 \in T_M$ \emph{intersect-well}
if:
\begin{itemize}
\item[1.] $U_1 \in \T_1$ and $U_2 \in \T_2$;
\item[2.] $(5/6) \delta n \le h(U_1) = h(U_2) \le \delta n$;
\item[3.] $(1/2) \delta n \le h(Z_1), h(Z_2) \le (4/6) \delta n$;
\item[4.] $\TS(\rho_1, Z_1)$, $\TS(Z_1^+, U_1)$, $\TS(\rho_2, Z_2)$, $\TS(Z_2^+, U_2)$;
\item[5.] $\Phi_1(U_1) = \Phi_2(U_2)$.
\end{itemize}
Define $\tilde{\I}$ by
\begin{equation}
\label{e:I-defn-random}
\begin{split}
  \tilde{\I}
  = \left\{ (U_1,U_2) : U_1, U_2 \text{intersect-well} \right\}\, .
\end{split}
\end{equation}

\end{definition}
Then it is clear that $|\tilde{\I}|$ has the same distribution as
$|\I|$ introduced earlier.
Recall that $\B=\B(i,c_0)$ is the event
$\{ |\I| \geq c_0 \frac{\sigma^4}{D^d} (\delta n)^{(6-d)/2} \}$.
Conditional on $\T_1, \T_2, \Phi_1, \Phi_2$, and the event
$\B$, draw a pair $(\U_1,\U_2)$ from the set $\tilde{\I}$, uniformly at random.
This is possible, since on the event $\B$ we have $|\tilde{\I}| > 0$. Denote
$$ \RR
   = \Reff ( \Phi_1(\rho_1) \lra \Phi_2(\rho_2) ) \, .$$
Writing for short
\eqnsplst
{ {\bf 1}_{V}
  &= {\bf 1}_{\V_1=V_1,\, \V_2 = V_2} \, ,\\
  {\bf 1}_{U}
  &= {\bf 1}_{\U_1 = U_1,\, \U_2 = U_2} \, , }
we have
\eqnspl{e:sum1}
{ &\E [ \RR {\bf 1}_\B ]
  = \sum_{V_1, V_2 \in T_M} \
    \sum_{U_1, U_2 \in T_M}
    \E [ \RR {\bf 1}_\B {\bf 1}_{V} {\bf 1}_{U} ] \, .}
Recall that $V_1$ and $U_1$ determine the vertices
$Z_1, Z_1^+$, and $V_2$ and $U_2$ determine
$Z_2, Z_2^+$. The first sum in \eqref{e:sum1} is over
all pairs $V_1, V_2 \in T_M$ at height $\delta n$. The second sum is over all pairs $U_1,U_2 \in T_M$
such that
\eqnsplst
{ (5/6) \delta n
  \le h_u
  &:= h(U_1) = h(U_2) \le \delta n \, ,\\
  \delta n / 2
  \le k_1
  &:= h(Z_1) \le (4/6) \delta n \, ,\\
  \delta n / 2
  \le k_2
  &:= h(Z_2) \le (4/6) \delta n \, .}
Given $z_1, z_1^+, z_2, z_2^+, u \in \Z^d$, write ${\bf 1}_{\Phi}$ for short
for the indicator function of the intersection of the following six events:
\begin{align*}
  \Phi_1(Z_1)
  &= (z_1, k_1) &
  \Phi_2(Z_2)
  &= (z_2, k_2) \\
  \Phi_1(Z_1^+)
  &= (z_1^+, k_1 + 1) &
  \Phi_2(Z_2^+)
  &= (z_2^+, k_2 + 1) \\
  \Phi_1(U_1)
  &= (u, h_u) &
  \Phi_2(U_2) &= (u, h_u)
\end{align*}
This allows us to rewrite \eqref{e:sum1} in the form:
\eqnspl{e:sum2}
{ &\E [ \RR {\bf 1}_\B ]
  = \sum_{V_1, V_2} \
    \sum_{U_1, U_2} \ \
    \sideset{}{'}\sum_{\substack{u, z_1, z_1^+ \\ z_2, z_2^+}} \ \
    \E [ \RR {\bf 1}_\B {\bf 1}_{V}
    {\bf 1}_{U} {\bf 1}_{\Phi} ] \, .}
Here the prime on the summation over $z_1, z_1^+, z_2, z_2^+, u$
indicates that these vertices are restricted to choices
that are compatible with the occurrence of
$\TS(\rho_1,Z_1)$, $\TS(Z_1^+,U_1)$, $\TS(\rho_2,Z_2)$, $\TS(Z_2^+,U_2)$,
that is, $\| z_1 \| \le \sqrt{k_1}$, $\| u - z_1^+ \| \le \sqrt{h_u - k_1 - 1}$,
$\| z_2 - x \| \le \sqrt{k_2}$, $\| u - z_2^+ \| \le \sqrt{h_u - k_2 - 1}$.

In the presence of the indicators on the right hand side
of \eqref{e:sum2} we can also insert the indicator
\eqnsplst
{ {\bf 1}_{T}
  &= {\bf 1}_{U_1 \in \T_1,\, U_2 \in \T_2} \, ,}
as this event already occurs.
Hence the expectation on the right hand side of
\eqref{e:sum2} equals
\eqn{e:in-T12}
{ \E [ \RR {\bf 1}_\B {\bf 1}_{V}
     {\bf 1}_{U} {\bf 1}_{T} {\bf 1}_{\Phi} ] \, .}
Observe that we have
\eqn{e:cond-I}
{ \E [ {\bf 1}_{U} \,|\, \B,\, (\T_1, \V_1),
    (\T_2, \V_2), \Phi_1, \Phi_2 ]
  = \frac{1}{|\tilde{\I}|}
  \le \frac{D^d}{c_0 \sigma^4 (\delta n)^{(6-d)/2}} \, ,}
and that $\RR$ and the other indicators in \eqref{e:in-T12}
are measurable with respect to the conditioning
in \eqref{e:cond-I}. Hence
\eqnspl{e:use-B}
{ \E [ \RR {\bf 1}_\B {\bf 1}_{V}
     {\bf 1}_{U} {\bf 1}_{T} {\bf 1}_{\Phi} ]
  &\le \frac{D^d}{c_0 \sigma^4 (\delta n)^{(6-d)/2}}
     \E [ \RR {\bf 1}_\B {\bf 1}_{V}
     {\bf 1}_{T} {\bf 1}_{\Phi} ] \\
  &\le \frac{D^d}{c_0 \sigma^4 (\delta n)^{(6-d)/2}}
     \E [ \RR {\bf 1}_{V} {\bf 1}_{T}
     {\bf 1}_{\Phi} ]\, .}
In order to bound $\RR$ from above, we define
\eqnst
{ \RR_1
  = \Reff (\Phi_1(\rho_1) \lra \Phi_1(U_1))
  \qquad \text{ and } \qquad
  \RR_2
  = \Reff (\Phi_2(\rho_2) \lra \Phi_2(U_2)) \, ,}
and by the triangle inequality for effective resistance \eqref{triangle} we have
$$ \RR \leq \RR_1 + \RR_2 \, ,$$
on the event ${\bf 1}_{T} {\bf 1}_{\Phi}$. Inserting this into \eqref{e:in-T12} yields
\eqnsplst
{ \E [ \RR {\bf 1}_\B ]
  &\le \frac{D^d}{c_0 \sigma^4 (\delta n)^{(6-d)/2}} \sum_{\substack{V_1, V_2,\\ U_1, U_2}}
   \sideset{}{'}\sum_{\substack{u, z_1, z_1^+ \\ z_2, z_2^+}}
  \Big(
    \E [ \RR_1 {\bf 1}_{V} {\bf 1}_{T}
    {\bf 1}_{\Phi} ]
  + \E [ \RR_2 {\bf 1}_{V} {\bf 1}_{T}
    {\bf 1}_{\Phi} ]
    \Big) \, .}
We only analyze the term containing $\RR_1$, since the arguments for handling $\RR_2$ are identical.
We bound $\RR_1$ from above by,
\eqnsplst
{ \RR_1
  &\le \Reff ( \Phi_1(\rho_1) \lra \Phi_1(Z_1) )
      + 1 + \Reff ( \Phi_1(Z_1^+) \lra \Phi_1(U_1) ) \, .}
Due to Lemma \ref{lem:main-new}, conditioned on the events
in the indicators ${\bf 1}_{V} {\bf 1}_{T}$, the
distribution of $\T_1^A$ is the same
as that of $\T_{k_1,2 \delta n}$, and the distribution of
$\T_1^B$ is the same as the distribution
of $\T_{h_u - k_1 - 1, 2 \delta n - k_1 - 1}$.
Due to the presence of the indicator ${\bf 1}_{\Phi}$,
that fixes the spatial locations of
$\Phi_1(Z_1), \Phi_1(Z_1^+), \Phi_1(U_1)$ (respectively)
to be $z_1, z_1^+, u$ (respectively), we have
\eqnsplst
{ \E [ \Reff ( \Phi_1(\rho_1) \lra \Phi_1(Z_1) )
     {\bf 1}_{V} {\bf 1}_{T} {\bf 1}_{\Phi} ]
  &= \gamma(k_1, z_1)
    \E [ {\bf 1}_{V} {\bf 1}_{T} {\bf 1}_{\Phi} ] \\
  &\le \gamma(k_1)
     \E [ {\bf 1}_{V} {\bf 1}_{T} {\bf 1}_{\Phi} ] \, ,}
and
\eqnsplst
{ \E [ \Reff ( \Phi_1(Z_1^+) \lra \Phi_1(U_1) )
     {\bf 1}_{V} {\bf 1}_{T} {\bf 1}_{\Phi} ]
  &= \gamma(h_u - k_1 - 1, u - z_1^+)
    \E [ {\bf 1}_{V} {\bf 1}_{T} {\bf 1}_{\Phi} ] \\
  &\le \gamma(h_u - k_1 - 1)
     \E [ {\bf 1}_{V} {\bf 1}_{T} {\bf 1}_{\Phi} ] \, .}
Together with analogous bounds for $\RR_2$, this yields
\eqnspl{e:sum3}
{ \E [ \RR {\bf 1}_\B ]
  &\le \frac{D^d}{c_0 \sigma^4 (\delta n)^{(6-d)/2}} \sum_{\substack{V_1, V_2,\\ U_1, U_2}} \ \
    \sideset{}{'}\sum_{\substack{u, z_1, z_1^+ \\ z_2, z_2^+}} \ \
    \E [ {\bf 1}_{V} {\bf 1}_{T} {\bf 1}_{\Phi} ] \\
  &\qquad \times \left( \gamma(k_1) + \gamma(h_u - k_1 - 1)
    + \gamma(k_2) + \gamma(h_u - k_2 - 1) +  2 \right) \\
  &\le \frac{ (4\max_{0 \le k \le \delta n} \gamma(k)+2) D^d}{c_0 \sigma^4 (\delta n)^{(6-d)/2}}
    \sum_{\substack{V_1, V_2,\\ U_1, U_2}}  \ \
    \sideset{}{'}\sum_{\substack{u, z_1, z_1^+ \\ z_2, z_2^+}} \ \
    \E [ {\bf 1}_{V} {\bf 1}_{T} {\bf 1}_{\Phi} ] \, .}
We have
\eqnsplst
{ \E [ {\bf 1}_{V} {\bf 1}_{T} {\bf 1}_{\Phi} ]
  &= \E [ {\bf 1}_{V} {\bf 1}_{T} ]
    \p^{k_1}(o,z_1) \p^1(z_1,z_1^+)
    \p^{h_u - k_1 - 1}(z_1^+, u) \\
  &\qquad \times \p^{k_2}(x, z_2) \p^1(z_2, z_2^+)
    \p^{h_u - k_2 - 1}(z_2^+, u) \, . }
Removing the restrictions involved in the primed summation
in \eqref{e:sum3} we can perform the convolutions
of the transition probabilities and get that
\eqnspl{e:sum4}
{ \E [ \RR {\bf 1}_\B ]
  &\le \frac{(4 \max_{1 \le k \le \delta n} \gamma(k) + 2) D^d}{c_0 \sigma^4 (\delta n)^{(6-d)/2}}
    \sum_{\substack{V_1, V_2,\\ U_1, U_2}}
    \E [ {\bf 1}_{V} {\bf 1}_{T} ] \p^{2 h_u} (o,x) \, .}
By the local central limit theorem, and due to $\| x \| \le \sqrt{\delta n}$,
$h_u \ge (5/6) \delta n \ge n_1/2$, we have
\eqn{e:LCLT}
{ \p^{2 h_u}(o,x)
  \le \frac{C}{D^d (\delta n)^{d/2}} \, .}

Now, fix $V_1, V_2$ and $h_u$ and sum ${\bf 1}_{T}$ on $U_1, U_2$. This number is bounded by the product of the number of vertices
of $\T_1$ and $\T_2$ at height $h_u$, respectively. Note that this
random variable is independent of ${\bf 1}_V$, and is a product of two independent variables
that have the same distribution, namely,  the number of vertices
of $\T_{\delta n, 2 \delta n}$ at level $h_u$.
The latter is stochastically smaller than the number of
vertices of $\T_{\delta n,\infty}$ at level $h_u$, which has expectation
$\sum_{k_1 < h_u} \E \widetilde{\NN}_{h_u - k_1} \le \sigma^2 \delta n$.
Finally, note that
\eqn{e:sumV}
{ \sum_{V_1, V_2} \E [ {\bf 1}_{V} ]
  = 1 \, .}
Putting together \eqref{e:sum4}, \eqref{e:LCLT}, and \eqref{e:sumV} we get:
\eqnst
{ \E [ \RR {\bf 1}_\B ]
  \le \frac{ (4 \max_{1 \le k \le \delta n} \gamma(k) + 2) D^d}{c_0 \sigma^4 (\delta n)^{(6-d)/2}}
      \sum_{h_u = (6/4) \delta n}^{\delta n} \frac{(\sigma^2 \delta n)^2}{D^d (\delta n)^{d/2}}
  \le C \max_{1 \le k \le \delta n} \gamma(k) \, .}
An appeal to the second part of Theorem \ref{goodthingshappen} concludes the proof.\qed \\

\noindent {\bf Proof of Theorem \ref{goodblock}.}
We choose $n_5 = \max \{ n_1(\p^1), n_9(\sigma^2, C_3, \p^1) \}$.
Note the elementary inequality
${1 \over {\gamma_1^{-1} + \gamma_2^{-1}}} \leq {\gamma_1 + \gamma_2 \over 4}$.
We apply this inequality to the resistances of the two graphs ``in parallel''
between $V_{\ell_1}$ and $V_{\ell_2}$: one via the backbone and one via the
vertices $\YY_{i+1}, \dots, \YY_{i+K}, \XX'_{i+K}$.
The parallel law \eqref{parallel} and Lemmas \ref{easyestimates} and \ref{hardestimate} gives
\eqnsplst
{ &\E \big[ \Reff (\Phi(X_i) \lra \Phi(X_{i+K}) \,\big|\, \Ggood \big] \\
  &\quad \leq \gamma(\ell_1 - i \delta n) + 1 + \gamma ((i+K)\delta n - \ell_2 - 1) \\
  &\quad\quad + \frac14 \left[ 1 + \gamma((i+1) \delta n - \ell_1 - 1)
    + (K-2) \gamma(\delta n) + \gamma(\ell_2 - (i+K-1) \delta n) \right] \\
  &\quad\quad + \frac14 \big[ 1 + \gamma((i+1) \delta n - \ell_1 - 1) + (K-1) \gamma(\delta n) \\
  &\quad\quad\qquad + 1 + \gamma ((i+K)\delta n - \ell_2 - 1) + C_5 \max_{1 \le k \le \delta n} \gamma(k) \big] \, .}
Choosing $K$ large with respect to $C_5$ concludes the proof of the theorem. \qed

\section{Proof of Theorem \ref{mainthm}}

Let $K_0$ be the constant in Theorem \ref{goodblock}. We fix
$K = K_0$ for the remainder of the proof. Let
\eqnst
{ n_0
  = \max \{ n_3(\sigma^2, C_3, \p^1, K), n_4(\sigma^2,C_3,\p^1), 4 k_1(\p^1) \} \, ,}
where $n_3$ and $n_4$ are the constants from Theorems \ref{goodthingshappen}
and \ref{goodblock} and $k_1$ is the constant from Proposition \ref{prop:rw-estimates}.
Let $\delta_0 > 0$, $\alpha \in (0,1/2)$ and $A > 0$ be constants. These
will be chosen below in the order: $\delta_0, \alpha, A$, and among others
we will require that
\eqnspl{e:K+3}
{ \delta_0 \le (K+4)^{-1} \, ,\qquad\qquad \delta_0 \le \delta_1 \, ,}
where $\delta_1$ is the constant from Proposition \ref{prop:rw-estimates}(ii).
Once $\delta_0$ and $\alpha$ will be chosen, we choose $A$ to satisfy:
\eqn{e:As}
{ A \ge n_0 / \delta_0 \, , \qquad\qquad
  A^{-1/(2 \alpha)} \le 1/\sqrt{n_0} \, .}
We prove the theorem by induction. Since $A \geq n_0 / \delta_0$,
the theorem holds for all $n < n_0 / \delta_0$,
so we may assume $n \ge n_0 / \delta_0$. Our induction hypothesis is that
for all $n' < n$ and all $x \in \Z^d$ we have
$$ \gamma(n',x)
   \leq A (n')^{1-\alpha} \Big ( { \| x \|^2 \over n' } \vee 1 \Big )^\alpha \, ,$$
and given the hypothesis we prove it for $n$. Since $\gamma(n,x) \leq n$
it suffices to prove when $\|x\| \leq n A^{-1/(2 \alpha)}$. Note that this
implies $\| x \| \le n/\sqrt{n_0}$.
Now, given such $x$ fix
\eqn{e:def-delta}
{ \delta
  = \min \Big\{ \eta \ge \min \{ \delta_0, n/\|x\|^2 \} :
         \text{$\eta n$ is an integer} \Big\} \, .}
Note that
\eqn{e:deltan-good}
{ \delta n
  \geq \min \left\{ \delta_0 n,\, \frac{n^2}{\| x \|^2} \right\}
  \ge n_0 \, ,}
and
\eqn{e:xnorm-good}
{ \|x\|
  \leq \sqrt{2n / \delta} \, ,}
so Theorem \ref{goodthingshappen} can be applied to $(x,n)$.

Consider the sequences
\eqnst
{ (0, \ldots, K), (K, \ldots, 2K), \ldots, ((N-1)K, \ldots, NK) \, ,}
where $N = \lf (\delta K)^{-1} \rf - 1$ is the number of sequences.
Fix any integer $m \geq 2n$ and define $\gamma_m(n,x)$ to be
$$ \gamma_m(n,x)
   = \E_{\T_{n,m}} \big[ \Reff \big( (o,0) \lra \Phi(V_n) \big) \, \big| \,
     \Phi(V_n) = (x,n) \big] \, ,$$
where we consider the resistance in the graph $\Phi(\T_{n,m})$,
so that $\gamma(n,x) = \sup_{m \geq 2n} \gamma_{m}(n,x)$.
We bound $\gamma_m(n,x)$ by estimating 
$$ \E \big[ \Reff(\Phi(X_i) \lra \Phi(X_{i+K}) \, \big| \,
       \Phi(V_n) = (x,n) \big] \, ,$$
for each $i = 0, K, 2K, \ldots, (N-1)K$ and then adding
these up using the triangle inequality for resistance \eqref{triangle},
also adding the estimate for the final stretch from
$NK \delta n$ to $n$.

Fix such an $i$.
We split the above expectation according to whether $\A(i) \cap \B(i,c_0)$ occurred.
By Theorem \ref{goodblock} we have that
\eqnspl{goodbound}
{ &\E \big[ \Reff(\Phi(X_i) \lra \Phi(X_{i+K})) {\bf 1}_{\A(i) \cap \B(i,c_0)} \, \big| \,
      \Phi(V_n) = (x,n) \big] \\
  &\qquad \leq {3 K \max_{1 \le k \le \delta n} \gamma(k) \over 4}
      \prob(\A(i) \cap \B(i,c_0) \, | \, \Phi(V_n)=(x,n)) \\
  &\qquad \leq {3 A K (\delta n)^{1-\alpha} \over 4}
      \prob(\A(i) \cap \B(i,c_0) \, | \, \Phi(V_n)=(x,n)) \, ,}
where the last inequality is due to our induction hypothesis.

We now proceed to estimate the expectation on the event that
either $\A(i)$ or $\B(i,c_0)$ fail. Recall that we may write
$$ \A(i)^c \cup \B(i,c_0)^c = \bigcup_{a=1}^6 \EE_{\mathrm{(a)}} \cup (\A(i)\cap \B^c(i,c_0)) \, ,$$
where $\EE_{\mathrm{(a)}}$ for $a = 1, \ldots, 6$ were defined in
Section \ref{sec-badtree}. For these estimate we will need the following lemmas.

%

\begin{lemma}
\label{gammacalc}
There exists $C_6 > 0$ such that, assuming the induction hypothesis, for all
$\delta n/4 \le k \leq 2 \delta n$ we have
$$\bar{\gamma}(k; (x,n)) \le (1 + C_6 \alpha) A k^{1-\alpha} \, ,$$
where $\bar{\gamma}$ is defined at \eqref{l:gammabar}.
\end{lemma}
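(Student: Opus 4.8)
The plan is to recognize $\bar\gamma(k;(x,n))$ as a conditional expectation for the underlying random walk and then reduce to Proposition~\ref{prop:rw-estimates}(i). By the definition \eqref{l:gammabar}, the weights $y\mapsto \p^k(o,y)\p^{n-k}(y,x)/\p^n(o,x)$ are exactly the conditional law of $S(k)$ given $\{S(n)=x\}$, where $\{S(j)\}_{j\ge 0}$ is the random walk with step distribution $\p^1$ started at $o$; hence $\bar\gamma(k;(x,n)) = \E[\gamma(k,S(k))\mid S(n)=x]$.

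First I would invoke the induction hypothesis on $\gamma(k,y)$. Since $\delta\le\delta_0\le(K+4)^{-1}$ and $K\ge 2$, we have $k\le 2\delta n<n$ once $n\ge n_0/\delta_0$, so the hypothesis gives $\gamma(k,y)\le A k^{1-\alpha}(\|y\|^2/k\vee 1)^\alpha$ for every $y\in\Z^d$. Substituting and factoring out $Ak^{1-\alpha}$ reduces the claim to the estimate $\E\big[(\|S(k)\|^2/k\vee 1)^\alpha\mid S(n)=x\big]\le 1+C_6\alpha$. Next I would use concavity of $t\mapsto t^\alpha$ on $(0,\infty)$: since its graph lies below the tangent line at $t=1$, we have $t^\alpha\le 1+\alpha(t-1)$ for all $t>0$, and applying this to $t=\|S(k)\|^2/k\vee 1\ge 1$ together with the trivial bound $(t\vee 1)-1\le t$ bounds the conditional expectation above by $1+\frac{\alpha}{k}\E[\|S(k)\|^2\mid S(n)=x]$.

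It then remains to apply Proposition~\ref{prop:rw-estimates}(i), for which I would verify its hypotheses: $k\ge\delta n/4\ge n_0/4\ge k_1$ by the choice of $n_0$, $k\le 2\delta n\le n$, and $\|x\|\le\sqrt{2n/\delta}\le 4n/\sqrt{2\delta n}\le 4n/\sqrt{k}$ by \eqref{e:xnorm-good} and $k\le 2\delta n$. This yields $\E[\|S(k)\|^2\mid S(n)=x]\le Ck$, whence $\bar\gamma(k;(x,n))\le Ak^{1-\alpha}(1+C\alpha)$, and we may take $C_6=C$. The computation is short once it is set up; the one point that needs care — and the step I would flag as the main (minor) obstacle — is checking the combined constraint $k_1\le k\le n$ and $\|x\|\le 4n/\sqrt{k}$ demanded by Proposition~\ref{prop:rw-estimates}(i), which is precisely where the bookkeeping on $\delta$, $n_0$, and the standing hypothesis $\|x\|\le\sqrt{2n/\delta}$ all get used at once.
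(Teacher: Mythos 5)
Your proposal is correct and follows essentially the same route as the paper: apply the induction hypothesis, reduce to bounding $\E\big[(\|S(k)\|^2/k\vee 1)^\alpha\mid S(n)=x\big]$ by $1+C\alpha$, and invoke Proposition~\ref{prop:rw-estimates}(i) after the same verification of its hypotheses. The only cosmetic difference is that you use the tangent-line inequality $t^\alpha\le 1+\alpha(t-1)$ pointwise, whereas the paper writes the expectation as $G_1(\alpha)$ with $G_1(0)=1$ and bounds $G_1'(\alpha)\le Ck^{-1}\E[\|S(k)\|^2\mid S(n)=x]$ via $(\|y\|^2/k)^\alpha\log(\|y\|^2/k)\le C\|y\|^2/k$ — the two computations are interchangeable.
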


\begin{proof}
By the induction hypothesis
$$ \bar{\gamma}(k; (x,n)) \leq A k^{1-\alpha} G_1(\alpha) \, ,$$
where
$$ G_1(\alpha)
   = \sum_{y \in\Z^d} {\p^{k}(o,y) \p^{n-k}(y,x) \over \p^n(o,x)}
     \Big( {\| y \|^2 \over k} \vee 1 \Big)^\alpha \, .$$
We have that $G_1(0) = 1$, and that
$$ G_1'(\alpha)
   = \sum_{y \in \Z^d: \| y \| > \sqrt{k}}
     {\p^{k}(o,y) \p^{n-k}(y,x) \over \p^n(o,x)}
     \Big( {\| y \|^2 \over k} \Big)^\alpha \log (\| y \|^2/k ) \, .$$
We bound $(\| y \|^2 / k)^\alpha \log (\| y \|^2/k ) \leq C \| y \|^2 / k$
since $\alpha \leq 1/2$ and get that
\eqn{e:G'}
{ G_1'(\alpha)
  \leq C k^{-1} \E \big[ \| S(k) \|^2 \, \big| \, S(n) = x \big] \, .}
Since $k \ge \delta n/4 \ge n_0/4 \ge k_1$ and
$\| x \| \le \sqrt{2n / \delta} = \sqrt{2} n / \sqrt{\delta n} \le 4 n /\sqrt{k}$,
we can apply Proposition \ref{prop:rw-estimates}(i) to the expectation
on the right hand side of \eqref{e:G'}. This gives that
$G_1'(\alpha) \le C$, and the lemma follows.
\end{proof}

For the next lemma, recall the notation of Section \ref{sec-badspatial}. 

\begin{lemma}
\label{gammacalc2}
There exists $C_6 > 0$ such that, assuming the induction hypothesis,
for all $s' \geq s$ we have
$$ \E \Big[ \Reff( (z_{s'-1}, T_{s'-1}) \lra (z_{s'}, T_{s'}) )
      {\bf 1}_{\EE_{(5)}^s} \, \Big| \, \Gtree \big]
   \leq (1 + C_6 \alpha) A t_{s'}^{1-\alpha} \prob(\EE_{(5)}^s| \Gtree) \, .$$
\end{lemma}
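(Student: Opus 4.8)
The plan is to feed the explicit bounds on $\cR_{s',s}$ from Lemma~\ref{spatialbad} into the induction hypothesis and then replay the convexity-in-$\alpha$ computation from the proof of Lemma~\ref{gammacalc}, closing the estimate with Proposition~\ref{prop:rw-estimates}. First dispose of the degenerate case: if $s'\in\{2,K+3\}$ then $t_{s'}=1$ and Lemma~\ref{spatialbad} already yields $\cR_{s',s}\le\prob(\EE_{(5)}^s\mid\Gtree)\le A\,t_{s'}^{1-\alpha}\,\prob(\EE_{(5)}^s\mid\Gtree)$ because $A\ge n_0/\delta_0\ge 1$. So assume $s'\notin\{2,K+3\}$, i.e.\ $\delta n/4\le t_{s'}\le\delta n$. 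Writing $N:=n-(T_{s-1}-T_0)$, one can use translation invariance of $\p^1$ and Chapman--Kolmogorov to sum out every window except the two relevant ones, which rewrites the bound of Lemma~\ref{spatialbad} in the form
\eqnst
{ \cR_{s',s}
  \le \frac{1}{\p^n(o,x)}\sum_{z}\q_s(z)\,\p^N(o,x-z)\;
      \E\big[\,\gamma(t_{s'},W)\,{\bf 1}_{\|S(t_s)\|>\sqrt{t_s}}\,\big|\,S(N)=x-z\,\big]\,, }
where $S$ denotes a $\p^1$-random walk started at $o$ and $W=S(t_s)$ when $s'=s$, while $W=S(t_s+t_{s'})-S(t_s)$ when $s'>s$. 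Since $\prob(\EE_{(5)}^s\mid\Gtree)$ is exactly the same sum with the factor $\gamma(t_{s'},W)$ deleted (this is \eqref{e:pes}), it is enough to show, for every $z$ in the support of $\q_s$,
\eqnst
{ \E\big[\,\gamma(t_{s'},W)\,\big|\,S(N)=x-z,\ \|S(t_s)\|>\sqrt{t_s}\,\big]
  \le (1+C_6\alpha)\,A\,t_{s'}^{1-\alpha}\,. }

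Now apply the induction hypothesis $\gamma(t_{s'},W)\le A\,t_{s'}^{1-\alpha}(\|W\|^2/t_{s'}\vee 1)^\alpha$; it remains to bound $G_z(\alpha):=\E[(\|W\|^2/t_{s'}\vee 1)^\alpha\mid S(N)=x-z,\ \|S(t_s)\|>\sqrt{t_s}]$ by $1+C_6\alpha$. Exactly as in Lemma~\ref{gammacalc}, $G_z(0)=1$, and using $u^\alpha\log u\le Cu$ for $u\ge 1$, $\alpha\le 1/2$ one obtains $G_z'(\alpha)\le C\,t_{s'}^{-1}\,\E[\|W\|^2\mid S(N)=x-z,\ \|S(t_s)\|>\sqrt{t_s}]$. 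Thus everything is reduced to the conditional second-moment bound $\E[\|W\|^2\mid S(N)=x-z,\ \|S(t_s)\|>\sqrt{t_s}]\le C\,t_{s'}$: when $s'=s$ this is Proposition~\ref{prop:rw-estimates}(ii) with $(n,x,k)$ replaced by $(N,x-z,t_s)$, and when $s'>s$ it is Proposition~\ref{prop:rw-estimates}(iii) with $(n,x,k,k')$ replaced by $(N,x-z,t_s,t_{s'})$. Integrating the derivative bound over $[0,\alpha]$ (valid since $\alpha\le 1/2$) gives $G_z(\alpha)\le 1+C_6\alpha$, proving the lemma when $s\notin\{2,K+3\}$.

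The step needing genuine care is checking that the hypotheses of Proposition~\ref{prop:rw-estimates} hold uniformly in $z,s,s'$: one needs $k_1\le t_s,t_{s'}\le\delta_1 N$ (plus $t_{s'}\le N-t_s$ for part (iii)) and $\|x-z\|\le 4N/\sqrt{t_s}\wedge 4N/\sqrt{t_{s'}}$. Since $\q_s$ is supported on $\{\|z\|\le\sum_{r<s}\sqrt{t_r}\}\subseteq\{\|z\|\le (K+4)\sqrt{\delta n}\}$ and $\|x\|\le\sqrt{2n/\delta}$ by \eqref{e:xnorm-good}, we have $\|x-z\|\le(\sqrt2+(K+4)\delta)\sqrt{n/\delta}$; together with $N\ge n-K\delta n\ge n/2$, $\delta n/4\le t_s,t_{s'}\le\delta n$ and $\delta n\ge n_0\ge 4k_1$, all the required inequalities hold once $\delta_0$ is taken small enough in terms of $K$ and $\delta_1$ --- which is permissible because $\delta_0$ is fixed before $\alpha$ and $A$ in the proof of Theorem~\ref{mainthm}. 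Finally, the remaining case $s\in\{2,K+3\}$, where $t_s=1$ and the constraint $\|S(t_s)\|>1$ concerns a single step, is handled by the same scheme after one preliminary observation: either $\prob(\|S(1)\|>1)=0$ and $\EE_{(5)}^s=\emptyset$ (so $\cR_{s',s}=0$), or the local limit theorem of Lemma~\ref{lem:beta-LCLT} gives $\prob(\|S(1)\|>1\mid S(N)=x-z)\ge c'>0$, after which exchangeability of the increments together with Proposition~\ref{prop:rw-estimates}(i) bounds the second moment of the length-$t_{s'}$ window by $C\,t_{s'}$.
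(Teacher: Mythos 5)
Your proof is correct and follows essentially the same route as the paper's: feed the explicit bounds of Lemma~\ref{spatialbad} into the induction hypothesis, control the resulting factor $\big(\|W\|^2/t_{s'}\vee 1\big)^\alpha$ by the same differentiate-in-$\alpha$ device as in Lemma~\ref{gammacalc} (with $G(0)=\prob(\EE_{(5)}^s\mid\Gtree)$ and $G'(\alpha)\le C\,\prob(\EE_{(5)}^s\mid\Gtree)$), and close via Proposition~\ref{prop:rw-estimates}(ii) when $s'=s$ and (iii) when $s'>s$. Your extra care with the edge cases $s'\in\{2,K+3\}$ and $s\in\{2,K+3\}$ (where $t_s=1<k_1$ and the proposition does not apply verbatim), and with checking its hypotheses against the conditioned walk length $N$ rather than $n$, goes slightly beyond what the paper writes out and is legitimate, since $\delta_0$ is fixed before $\alpha$ and $A$ and may be taken small in terms of $K$ and $\delta_1$.
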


\begin{proof}
We appeal to Lemma \ref{spatialbad} and use the induction hypothesis.
When $s' = s$ and $s' \neq 2,\, K+3$ the required quantity is at most
$A t_{s}^{1-\alpha} G_2(\alpha)$ where
$$ G_2(\alpha)
   = \sum_{z} \q_s(z) \sum_{y : \| y \| > \sqrt{t_s}}
     {\p^{t_s}(o,y) \p^{n-T_s+T_0}(y,x-z) \over \p^n(o,x)}
     \Big ( {\|y\|^2 \over t_{s'}} \vee 1 \Big )^{\alpha} \, .$$
By \eqref{e:pes} we have that
$$ G_2(0) = \prob(\EE_{(5)}^s| \Gtree) \, .$$
As in the previous lemma, we have
\eqnspl{e:G'2}
{ G_2'(\alpha)
  &\leq \sum_{z} \q_s(z) \sum_{y : \| y \| > \sqrt{t_s}}
        {\p^{t_s}(o,y) \p^{n-T_s+T_0}(y,x-z) \over \p^n(o,x)}
        {\|y\|^2 \over t_{s}} \\
  &= \frac{1}{t_s} \sum_{z} \q_s(z) \frac{\p^{n - T_{s-1} + T_0}(o,z-x)}{\p^n(o,x)} \\
  &\qquad\quad \times \sum_{y : \| y \| > \sqrt{t_s}} \| y \|^2
        {\p^{t_s}(o,y) \p^{n - T_s + T_0}(y,x-z) \over \p^{n - T_{s-1} + T_0}(o,x-z)} \, .}
Fix $z$ and observe that
\eqnsplst
{ \| x - z \|
  &\leq \| x \| + \| z \|
  \le \sqrt{2n / \delta} + (K+4) \sqrt{\delta n}
  = n \left( \frac{\sqrt{2}}{\sqrt{\delta n}} + \frac{\delta (K+4)}{\sqrt{\delta n}} \right) \\
  &\le n \frac{3}{\sqrt{\delta n}} \, ,}
where in the last step we used \eqref{e:K+3}.
This implies that $\| x - z \| \le 3n / \sqrt{\delta n} \le 3n / \sqrt{t_s}$.
We also have $t_s \ge \delta n/4 \ge n_0/4 \ge k_1$,
where $k_1$ is the constant chosen in Proposition \ref{prop:rw-estimates},
and $t_s \le \delta n \le \delta_1 n$, due to \eqref{e:K+3}.
Hence we can apply Proposition \ref{prop:rw-estimates}(ii) to the sum over
$y$ in \eqref{e:G'2}, and get that
\eqnsplst
{ G_2'(\alpha)
  &\le C \sum_{\substack{z, y :\\ \| y \| > \sqrt{t_s}}}
      \q_s(z) \frac{\p^{t_s}(o,y) \p^{n - T_{s} + T_0}(y,x-z)}{\p^n(o,x)} \\
  &= C \prob(\EE_{(5)}^s| \Gtree) \, .}
This gives the statement of the lemma in the case $s' = s$.

The case $s' > s$ is similar. We appeal to the last statement of
Lemma \ref{spatialbad}, and obtain that the required quantity is
at most $A t_{s'}^{1-\alpha} G_3(\alpha)$ where
\eqnsplst
{ G_3(\alpha)
  &= \sum_{\substack{z, y_s, y : \\ \|y_s\|>\sqrt{t_s}}} \q_s(z)
    {\p^{t_s}(o,y_s) \p^{t_{s'}}(o,y) \p^{n-t_{s'}-T_s}(y, x-z-y_s) \over \p^n(o,x)}
    \Big ( {\|y\|^2 \over t_{s'}} \vee 1 \Big )^{\alpha} \, .}
By \eqref{e:pes} we see (performing the sum over $y$) that
$G_3(0) = \prob(\EE_{(5)}^s| \Gtree)$.
The derivative $G_3'(\alpha)$ can be analyzed similarly
to $G_2'$, this time using Proposition \ref{prop:rw-estimates}(iii).
This yields $G_3'(\alpha) \le C \prob(\EE_{(5)}^s| \Gtree)$,
and proves the statement of the lemma in the case $s' > s$.
\end{proof}

We now proceed with bounding the resistance given $\A(i)^c \cup \B(i,c_0)^c$. Lemmas \ref{badtree1} and \ref{gammacalc}
and the induction hypothesis give that
\eqnsplst
{ &\E \Big[ \Reff(\Phi(X_i) \lra \Phi(X_{i+K})) \, \Big| \,
     \EE_{\mathrm{(1)}}, \Phi(V_n) = (x,n) \Big ] \\
  &\qquad \leq A (K + C_4 \delta) (1 + C_6 \alpha) (\delta n)^{1-\alpha}  \, .}
Lemmas \ref{badtree2} and \ref{gammacalc} give
\begin{equation*}
\begin{split}
  &\E \Big[ \Reff(\Phi(X_i) \lra \Phi(X_{i+K})) \, \Big| \,
      \EE_{\mathrm{(2)}},\, \ell_1,\, \Phi(V_n) = (x,n) \Big] \\
 &\qquad \leq A (1 + C_6 \alpha) \Big[ (\ell_1 - i\delta n)^{1-\alpha}
      + ((i + 1) \delta n - \ell_1)^{1-\alpha} \\
 &\qquad\qquad\qquad\qquad\quad + (K - 1)(\delta n)^{1-\alpha} \Big] + 1 \, .
\end{split}
\end{equation*}
Since ${\ell_1 \over \delta n} - i \in [1/4,3/4]$,
$$ (\ell_1 - i\delta n)^{1-\alpha} + ((i + 1) \delta n - \ell_1)^{1-\alpha}
   \leq (1 + C_7 \alpha) (\delta n)^{1-\alpha} \, ,$$
where $C_7 = (1/2)\log 4 + \sqrt{(3/4)} \log(4/3)$. Hence
\eqnsplst
{ &\E \Big[ \Reff(\Phi(X_i) \lra \Phi(X_{i+K})) \, \Big| \,
      \EE_{\mathrm{(2)}},\, \ell_1,\, \Phi(V_n) = (x,n) \Big] \\
  &\qquad \leq A (1 + (C_6+C_7+C_6C_7)\alpha) K (\delta n)^{1-\alpha} + 1 \, .}
Lemmas \ref{badtree3} and \ref{gammacalc} give
\begin{equation*}
\begin{split}
  &\E \Big[ \Reff(\Phi(X_i) \lra \Phi(X_{i+K})) \, \Big| \,
      \EE_{\mathrm{(3)}},\, \ell_1,\, \Phi(V_n) = (x,n) \Big] \\
  &\qquad \leq A (1 + C_6 \alpha) \Big[ (\ell_1 - i\delta n)^{1-\alpha}
      + ((i + 1) \delta n - \ell_1)^{1-\alpha} \\
  &\qquad\qquad\qquad\qquad\quad + (K - 1 + C_4 \delta)
      (\delta n)^{1-\alpha} \Big] + 1 \\
  &\qquad \leq A (K + C_4\delta + C_7 \alpha ) (1 + C_6\alpha)
      (\delta n)^{1-\alpha} + 1\, .
\end{split}
\end{equation*}
Lemmas \ref{badtree4} and \ref{gammacalc} give
\begin{equation*}
\begin{split}
  &\E \Big[ \Reff(\Phi(X_i) \lra \Phi(X_{i+K})) \, \Big| \,
      \EE_{\mathrm{(4)}},\, \ell_1,\, \ell_2,\, \Phi(V_n) = (x,n) \Big] \\
  &\quad \leq A (1 + C_6 \alpha)) \Big[ (\ell_1 - i \delta n)^{1-\alpha}
      + ((i + 1) \delta n - \ell_1)^{1-\alpha} + (K - 2)(\delta n)^{1-\alpha} \\
  &\quad\qquad\qquad\qquad\quad + (\ell_2 - (i + K - 1) \delta n)^{1-\alpha}
      + ((i + K) \delta n - \ell_2)^{1-\alpha} \Big] + 2 \\
  &\quad \leq A (1 + C_6 \alpha)) (K+2C_7\alpha) (\delta n)^{1-\alpha} + 2  \, .
\end{split}
\end{equation*}

\noindent Lemmas \ref{spatialbad}, \ref{gammacalc2} and the induction
hypothesis give that for any $s = 1, \ldots, K + 4$ and
$s' = 1, \ldots, K + 4$ we have that
\begin{eqnarray*}
 \E \Big[ \Reff((z_{s'-1}, T_{s'-1}) \lra (z_{s'}, T_{s'})) \, \Big| \,
      \EE_{(5)}^s, \Gtree \big] \qquad \qquad   \\
   \qquad \qquad \leq \begin{cases}
      1                   & \text{if $s'=2,K+3$,} \\
      A(t_{s'})^{1-\alpha} & \text{if $s' < s, s'\neq 2, K+3$,}\\
      A (1+C_6 \alpha) (t_{s'})^{1-\alpha} & \text{if $s'\geq s, s'\neq 2, K+3$.}
   \end{cases} \end{eqnarray*}
By the triangle inequality for resistance we get that for all
$s=1, \ldots, K+4$
\eqnsplst
{ &\E \Big[ \Reff(\Phi(X_i) \lra \Phi(X_{i+K})) \, \Big| \,
     \EE_{(5)}^s, \Gtree \Big] \\
  &\qquad \leq A K (1 + C_6\alpha)(1 +2C_7 \alpha) (\delta n)^{1-\alpha} + 2 \, .}
Hence
\begin{equation*}
\begin{split}
  &\E \Big[ \Reff(\Phi(X_i) \lra \Phi(X_{i+K})) \, \Big| \,
     \EE_{\mathrm{(5)}},\, \ell_1,\, \ell_2,\, \Phi(V_n) = (x,n) \Big] \\
  &\qquad \leq A K (1 + C_6\alpha)(1 +2C_7 \alpha) (\delta n)^{1-\alpha} + 2 \, .
\end{split}
\end{equation*}
By Lemma \ref{badtree6} and the induction hypothesis (recall
$\EE_{\mathrm{(7)}} = \A(i) \cap \B(i,c_0)^c$):
\eqnsplst
{ \E \Big [ \Reff(\Phi(X_i) \lra \Phi(X_{i+K}))  \, \Big| \,
     \EE_{\mathrm{(6)}} \cup \EE_{\mathrm{(7)}}, \Gtree \Big]
  \leq A (K + 2C_7\alpha) (\delta n)^{1-\alpha} +2 \, .}
%
Putting these together gives that there exists $C_8=C_8(K)>0$ such that
\eqnsplst
{ &\E \Big[ \Reff(\Phi(X_i) \lra \Phi(X_{i+K}))  \, \Big| \,
      \A(i)^c \cup \B(i,c_0)^c,\, \Phi(V_n) = (x,n) \Big] \\
  &\qquad \leq A (K + C_8(\delta +\alpha)) (\delta n)^{1-\alpha} + 2 \, .}
This together with \eqref{goodbound} yields
\eqnsplst
{ &\E \Big[ \Reff(\Phi(X_i) \lra \Phi(X_{i+K}))  \, \Big| \,
      \Phi(V_n) = (x,n) \Big] \\
  &\qquad \leq {3 A K (\delta n)^{1-\alpha} \over 4}
      \prob(\A(i)\cap \B(i,c_0) \, | \, \Phi(V_n) = (x,n) ) \\
  &\qquad\quad + \Big [ A (K + C_8(\delta+ \alpha)) (\delta n)^{1-\alpha} + 2 \Big ]
      \prob(\A(i)^c \cup \B(i,c_0)^c \, |\, \Phi(V_n) = (x,n)) \, .}
By Theorem \ref{goodthingshappen} there exists a constant
$c = c(K) \in (0,1)$ such that the last quantity is at most
$$ A (\delta n)^{1-\alpha} \Big[  {c3K \over 4} +
     (1 - c) (K + C_8(\delta + \alpha)) \Big] \, .$$
We now choose $\delta_0$ and $\alpha$ (depending only on $K=K_0$).
In addition to the already required \eqref{e:K+3}, let $\delta_0$
satisfy:
\be
\label{magic1}
  \delta_0 \le {c \over 16 C_8} \, , \qquad\qquad
  2 K \delta_0 (1 + C_6) + 2 \delta_0 (1 + C_6) < \frac{c}{16} \, ,
\ee
Let $\alpha > 0$ satisfy:
\be
\label{magic2}
  \alpha \le \delta_0 \, , \qquad\qquad
  \Big ( 1 - {c \over 16} \Big ) \delta_0^{-\alpha} \le 1 \, .
\ee
The first condition on $\delta_0$ in \eqref{magic1} gives that
\eqnspl{e:bulk-bnd}
{ &\E \Big[ \Reff(\Phi(X_i) \lra \Phi(X_{i+K}))  \, \Big| \,
      \Phi(V_n) = (x,n) \Big] \\
  &\qquad \leq A K (1 - c/8) (\delta n)^{1-\alpha} \\
  &\qquad \le A n^{1-\alpha} \delta^{-\alpha} K \delta (1 - c/8) \, ,}
for $i = 0, K, \dots, (N-1)K$. For the final stretch,
Lemmas \ref{lem:final-stretch} and \ref{gammacalc}
and the induction hypothesis gives:
\eqnspl{e:final-stretch-bnd}
{ &\E \Big[ \Reff(\Phi(X_{i^{\mathrm{last}}}) \lra \Phi(V_n)) \, \Big| \,
      \Phi(V_n) = (x,n) \Big] \\
  &\qquad \le K' A (\delta n)^{1-\alpha} (1 + C_6 \alpha)
      + A (\tilde{n})^{1-\alpha} ( 1 + C_6 \alpha ) \, .}
Using $K' \le 2K$ and $\tilde{n} \le 2 \delta n$ and the
second requirment on $\delta_0$ in \eqref{magic1},
the right hand side of \eqref{e:final-stretch-bnd}
is at most
\eqnsplst
{ &A n^{1-\alpha} \delta^{-\alpha}
      \big( 2 K \delta (1 + C_6) + (2 \delta)(1 + C_6) \Big) \\
  &\qquad \le A n^{1-\alpha} \delta^{-\alpha} (c/16) \, .}
We sum \eqref{e:bulk-bnd} over all sequences using the triangle inequality
and add \eqref{e:final-stretch-bnd}. This gives
\eqnspl{e:gamma-nx-bnd}
{ \gamma(n,x)
  &= \sup_{m \geq 2n} \gamma_m(n,x) \\
  &\leq A n^{1-\alpha} \delta^{-\alpha} \Big[ N K \delta (1 - c/8) + c/16 \Big] \\
  &\le A n^{1-\alpha} \delta^{-\alpha} (1 - c/16) \, ,}
where we used $N K \delta \le 1$.

To conclude, note that if in the definition \eqref{e:def-delta}
of $\delta$ we have $\delta_0 \le n / \| x \|^2$, then
$\delta^{-\alpha} \le \delta_0^{-\alpha}$, and due to the choice
of $\alpha$, \eqref{e:gamma-nx-bnd} yields $\gamma(n,x) \leq A n^{1-\alpha}$.
If we have $n / \| x \|^2 < \delta_0$, then $\delta^{-\alpha} \le (\| x \|^2/n)^\alpha$,
and \eqref{e:gamma-nx-bnd} yields $\gamma(n,x) \leq A n^{1-\alpha} (\| x \|^2 / n)^\alpha$
This concludes the induction and our proof.
\qed

\section*{Acknowledgements} We thank Ori Gurel-Gurevich, Gady Kozma and Gordon Slade for useful conversations. This research was supported by NSF and NSERC grants.


\begin{thebibliography}{99}


%

%
%

\bibitem{ANbook} Athreya, K. B. and Ney, P. E. (1972), {\em Branching processes.}
Die Grundlehren der mathematischen Wissenschaften, Band 196. Springer-Verlag, New York-Heidelberg.
%

\bibitem{BJKS08} Barlow M.T., J\'arai A. A., Kumagai T. and Slade G. (2008), {\em Comm. Math. Physics.}  {\bf 278}, 385--431.




%


%

%

%
%
%




\bibitem{D} Durrett, R. (1996), {\em Probability: Theory and Examples}, Second
edition. Duxbury Press, Belmont, California.

%

\bibitem{FKG} Fortuin C. M., Kasteleyn P. W. and Ginibre J. (1971),
Correlation inequalities on some partially ordered sets.
{\em Comm. Math. Phys.}, {\bf 22}, 89--103.

%

%
%
%
%
%
%

\bibitem{Harris} Harris, T. E. (1960),
A lower bound for the critical probability in a certain percolation process.
{\em Proc. Cambridge Philos. Soc.}, {\bf 56}, 13--20.


\bibitem{HHS1} van der Hofstad R., den Hollander F. and Slade G. (2002), Construction of the incipient infinite cluster for spread-out oriented percolation above 4+1 dimensions, {\em Comm. Math. Phys.} {\bf 231}, no. 3, 435--461.

\bibitem{HHS2} van der Hofstad R., den Hollander F. and Slade G. (2007), The survival probability for critical spread-out oriented percolation above 4+1 dimensions. I. Induction, {\em Probab. Theory Related Fields}, {\bf 138}, no. 3-4, 363--389.

\bibitem{HHS3} van der Hofstad R., den Hollander F. and Slade G. (2007), The survival probability for critical spread-out oriented percolation above 4+1 dimensions. II. Expansion,  {\em Ann. Inst. H. Poincaré Probab. Statist.} {\bf 43}, no. 5, 509--570.

\bibitem{HS1} van der Hofstad R. and Slade G. (2003), Convergence of critical oriented percolation to super-Brownian motion above 4+1 dimensions, {\em Ann. Inst. H. Poincaré Probab. Statist.} {\bf 39}, no. 3, 413--485.

\bibitem{LL} Lawler G. F. and Limic V. (2010), Random walk: a modern introduction. Cambridge Studies in Advanced Mathematics, 123. Cambridge University Press, Cambridge.

\bibitem{Kes} Kesten H. (1986), Subdiffusive behavior of random walk on a random cluster. {\em Ann. Inst. H. Poincaré Probab. Statist.} {\bf 22}, no. 4, 425--487.

\bibitem{Kolm} Kolmogorov A. N. (1938), On the solution of a problem in biology. {\em Izv. NII Matem. Mekh. Tomskogo Univ.} {\bf 2}, 7--12.

\bibitem{KN08} Kozma G. and Nachmias A. (2009),
The Alexander-Orbach conjecture holds in high dimensions,
  {\em Invent. Math.} {\bf 178}, no. 3, 635--654.

\bibitem{KM} Kumagai T. and Misumi J. (2008), Heat kernel estimates for strongly recurrent random walk on random media.
{\em J. Theoret. Probab.} {\bf 21}, no. 4, 910--935.

\bibitem{LP} R. Lyons with Y. Peres, Probability on Trees and Networks, In preparation,
\url{http://mypage.iu.edu/~rdlyons/prbtree/prbtree.html}.

\bibitem{LPP} Lyons R., Pemantle R. and Peres Y. (1995), Conceptual proofs of $L \log L$ criteria for mean behavior of branching processes. {\em Ann. Probab.} {\bf 23}, no. 3, 1125--1138.

%
%
%




%
%
%
%
%


%
%

%

%


\end{thebibliography}
\end{document}